\newtheorem*{thm*}{Theorem} 
\newtheorem{thm}{Theorem} 
\newtheorem{thmA}{Theorem} 
\newtheorem*{lemma*}{Lemma}
\newtheorem{lemma}{Lemma}
\newtheorem*{prop*}{Proposition}
\newtheorem*{corl*}{Corollary}
\newtheorem{corl}{Corollary}
\theoremstyle{definition}
\newcommand{\dist}{\operatorname{dist}}
\newcommand{\integers}{\mathbb{Z}}
\newcommand{\disk}{\mathbb{D}}
\newcommand{\reals}{\mathbb{R}}
\newcommand{\Lp}[1]{L^{#1}}
\newcommand{\BMO}{\operatorname{BMO}}
\newcommand{\continuous}{\mathcal{C}}
\newcommand{\zyg}{\Lambda_{\ast}}
\newcommand{\zygdyc}{\Lambda_{\ast d}}
\newcommand{\IBMO}{\operatorname{I}(\BMO)}
\newcommand{\Lip}{\operatorname{Lip}}
\newcommand{\clIBMO}{\overline{\IBMO}}
\newcommand{\sobolev}[2]{W^{#1,#2}}
\newcommand{\norm}[2]{\left\lVert #1 \right\rVert_{#2}}
\title{Approximation in the Zygmund Class}
\author{Artur~Nicolau
        and Odí~Soler~i~Gibert
        \thanks{Both authors supported by the Generalitat de Catalunya (grant 2017 SGR 395) and the Spanish Ministerio de Ciencia e Innovaci\'on (projects MTM2014-51824-P and MTM2017-85666-P).}}
\date{}
\newcommand{\Addresses}{{
  \bigskip
  \footnotesize

  Artur~Nicolau: \textsc{Universitat Autònoma De Barcelona, Departament de Matemàtiques, Edifici C, 08193-Bellaterra, Catalunya}\par\nopagebreak
  \textit{E-mail address}: \texttt{artur@mat.uab.cat}

  \medskip

  Odí~Soler~i~Gibert: \textsc{Universitat Autònoma De Barcelona, Departament de Matemàtiques, Edifici C, 08193-Bellaterra, Catalunya}\par\nopagebreak
  \textit{E-mail address}: \texttt{odisoler@mat.uab.cat}

}}
\begin{document}

  \maketitle
  
  \begin{abstract}
    We study the distance in the Zygmund class $\zyg$ to the subspace $\IBMO$ of functions with distributional derivative with bounded mean oscillation.
    In particular, we describe the closure of $\IBMO$ in the Zygmund seminorm.
    We also generalise this result to Zygmund measures on $\reals^d.$
    Finally, we apply the techniques developed in the article to characterise the closure of the subspace of functions in $\zyg$ that are also in the classical Sobolev space $\sobolev{1}{p},$ for $1 < p < \infty.$
  \end{abstract}
  
  \section{Introduction}
  \label{sec:intro}
  
  A continuous real valued function $f$ on the real line belongs to the \emph{Zygmund class} $\zyg$ if 
  \begin{equation*}
    \norm{f}{\ast} = \sup_{\substack{x,h \in \reals\\ h > 0}} |\Delta_2f(x,h)| < \infty,
  \end{equation*}
  where
  \begin{equation*}
    \Delta_2f(x,h) = \frac{f(x+h) - 2f(x) + f(x-h)}{h}
  \end{equation*}
  denotes the \emph{second divided difference} centred at $x$ with step $h,$ or in other words, the second divided difference on the interval $I = (x-h,x+h)$ and denoted $\Delta_2f(I) = \Delta_2f(x,h).$
  For a function $f \in \zyg,$ the quantity $\norm{f}{\ast}$ is called the \emph{Zygmund seminorm} of $f.$
  The Zygmund class is the natural substitute of the space of Lipschitz functions in many different contexts as polynomial approximation, Bessel potentials, Calderón-Zygmund theory, and has been extensively studied (see for instance \cite{ref:Zygmund}, Chapter V of \cite{ref:Stein}, \cite{ref:MakarovSmoothMeasures}, \cite{ref:DonaireLlorenteNicolau}).
  
  For a measurable set $A \subset \reals,$ we denote by $|A|$ its Lebesgue measure, and we will denote by $\chi_A$ its indicator function.
  We use the standard notation $a \lesssim b$ (respectively $a \gtrsim b)$ if there exists an absolute constant $C > 0$ such that $a \leq Cb$ (resp. $a \geq Cb).$
  We will also denote $a \simeq b$ if $a \lesssim b$ and $a \gtrsim b.$
  
  A locally integrable function $f$ on the real line is said to have \emph{bounded mean oscillation,} $f \in \BMO,$ if
  \begin{equation}
    \label{eq:BMONorm}
    \norm{f}{\BMO} = \sup_I \left(\frac{1}{|I|} \int_I |f(x)-f_I|^2 \, dx\right)^{1/2} < \infty,
  \end{equation}
  where $I$ ranges over all finite intervals in $\reals$ and where
  \begin{equation*}
    f_I = \frac{1}{|I|} \int_I f(x)\, dx,
  \end{equation*}
  is the average of $f$ on $I.$
  The space of continuous functions such that their derivatives, in the sense of distributions, are $\BMO$ functions is
  \begin{equation*}
    \IBMO = \{f \in \continuous(\reals)\colon f'\in\BMO\}.
  \end{equation*}
  It is easy to check that $\IBMO \subsetneq \zyg.$
  In \cite{ref:Strichartz}, R.~Strichartz found a characterisation for functions in $\IBMO$ in terms of their second divided differences.
  We state it below for compactly supported functions.
  \begin{thmA}[R.~Strichartz]
    \label{thm:Strichartz}
    A compactly supported function $f$ is in $\IBMO$ if and only if
    \begin{equation}
      \label{eq:IBMOCondition}
      \sup_I \frac{1}{|I|} \int_I\int_0^{|I|} |\Delta_2f(x,h)|^2 \frac{dh\, dx}{|h|} < \infty,
    \end{equation}
    where $I$ ranges over all finite intervals on $\reals.$
  \end{thmA}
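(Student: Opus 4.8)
The plan is to recast condition \eqref{eq:IBMOCondition} as a Carleson measure statement about $g = f'$ and then to establish, in both directions, the Fefferman--Stein type characterisation of $\BMO$ for a suitable Littlewood--Paley kernel. The key elementary identity is that, writing $g = f'$, for every interval $I = (x-h,x+h)$ one has
\[
  \Delta_2 f(x,h) = \frac{1}{h}\left(\int_x^{x+h} g - \int_{x-h}^{x} g\right) = (\phi_h \ast g)(x),
  \qquad \phi = \chi_{(-1,0)} - \chi_{(0,1)},\quad \phi_h(t) = h^{-1}\phi(t/h).
\]
Here $\phi$ is bounded, compactly supported and of mean zero, so $|\widehat{\phi}(t)| \lesssim \min(|t|,|t|^{-1})$ and $c_\phi := \int_0^\infty |\widehat{\phi}(s)|^2\,\tfrac{ds}{s} \in (0,\infty)$; the non-vanishing of $c_\phi$ will be essential. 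With this notation, \eqref{eq:IBMOCondition} says precisely that $d\mu(x,h) = |(\phi_h \ast g)(x)|^2\,\tfrac{dh\,dx}{h}$ is a Carleson measure on the upper half-plane $\halfpl$, i.e. $\mu(I\times(0,|I|)) \lesssim |I|$ for every interval $I$. So the theorem reduces to showing that a compactly supported $g$ lies in $\BMO$ if and only if this $\mu$ is Carleson.

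For the implication $f\in\IBMO \Rightarrow \eqref{eq:IBMOCondition}$, fix an interval $I$, put $\tilde I = 3I$, and split $g = g_{\tilde I} + (g - g_{\tilde I})\chi_{\tilde I} + (g - g_{\tilde I})\chi_{\tilde I^c} =: c + g_1 + g_2$. The constant $c$ contributes nothing since $\int \phi_h = 0$. For $x\in I$ and $0 < h < |I|$ the function $t\mapsto \phi_h(x-t)$ is supported in $(x-h,x+h)\subset\tilde I$, so $(\phi_h\ast g_2)(x) = 0$ there and $g_2$ also contributes nothing to the integral over $I\times(0,|I|)$. Finally, by Fubini and Plancherel, $\int_\reals\int_0^\infty |(\phi_h\ast g_1)(x)|^2\,\tfrac{dh\,dx}{h} = c_\phi\,\norm{g_1}{L^2}^2$, and $\norm{g_1}{L^2}^2 = \int_{\tilde I}|g-g_{\tilde I}|^2 \le |\tilde I|\,\norm{g}{\BMO}^2 \lesssim |I|\,\norm{g}{\BMO}^2$. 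Summing the three contributions gives $\mu(I\times(0,|I|)) \lesssim |I|\,\norm{g}{\BMO}^2$, uniformly in $I$.

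For the converse I would use Calderón's reproducing formula. Since $\int_0^\infty |\widehat{\phi}(t\xi)|^2\,\tfrac{dt}{t} = c_\phi$ for every $\xi\neq0$, there is a kernel $\psi$, namely the reflection $\psi(x) = \phi(-x)$, such that $g = c_\phi^{-1}\int_0^\infty \psi_h\ast\phi_h\ast g\,\tfrac{dh}{h}$ in a suitable (distributional, modulo constants) sense; in this direction, where $g$ is a priori only the distributional derivative of $f$, the same formula serves to produce $g$ as an honest locally integrable function and to verify $f' = g$. To prove $g\in\BMO$ I would then argue by $H^1$--$\BMO$ duality: for $b$ in a dense subclass of $H^1$, moving $\psi_h$ onto $b$ (its convolution adjoint has kernel $\phi_h$) yields
\[
  \left|\int g\,b\right| = c_\phi^{-1}\left|\int_0^\infty\!\!\int_\reals (\phi_h\ast g)(x)\,(\phi_h\ast b)(x)\,dx\,\tfrac{dh}{h}\right|
  \le c_\phi^{-1}\int_0^\infty\!\!\int_\reals |(\phi_h\ast g)(x)|\,|(\phi_h\ast b)(x)|\,\tfrac{dh\,dx}{h}.
\]
The measure $|(\phi_h\ast g)(x)|^2\,\tfrac{dh\,dx}{h}$ is Carleson with some constant $A$ by hypothesis, and $(x,h)\mapsto (\phi_h\ast b)(x)$ has area integral in $L^1$ with $\norm{S((\phi_h\ast b)_h)}{L^1}\lesssim \norm{b}{H^1}$, because $\phi$ is bounded, compactly supported, of bounded variation and of mean zero; the tent-space $T^1_2$--$T^\infty_2$ duality of Coifman--Meyer--Stein then gives $|\int g\,b|\lesssim A^{1/2}\norm{b}{H^1}$, whence $\norm{g}{\BMO}\lesssim A^{1/2}$. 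It is worth noting the dyadic face of the condition: for a dyadic interval $J$ with midpoint $c_J$, the quantity $\Delta_2 f(c_J,|J|/2)$ is exactly the difference of the averages of $g$ over the two halves of $J$, hence, up to normalisation, the Haar coefficient of $g$, so \eqref{eq:IBMOCondition} is morally the classical dyadic-$\BMO$ square-function condition for finitely many shifted grids.

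The main obstacle is the converse, and inside it the rigorous justification of the reproducing-formula machinery at low regularity: one must establish convergence of the Calderón integral for a $g$ that need not decay (working modulo constants and localising to a large ball), confirm that the reconstructed object genuinely coincides with $f'$, and verify the area-integral/tent-space estimates for the kernel $\phi$, which is only of bounded variation rather than smooth — here one exploits the bound $\norm{\phi_h(\cdot-\tau)-\phi_h}{L^1}\le |\tau|\,\mathrm{Var}(\phi_h)\lesssim |\tau|/h$ in place of Hölder smoothness when running the $H^1$-atom computation. All of this is by now standard, but it relies crucially on the non-degeneracy $c_\phi>0$ established at the outset; the ``only if'' direction, by contrast, is the short and routine part of the argument.
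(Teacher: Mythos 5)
The paper does not prove this statement: Theorem~\ref{thm:Strichartz} is quoted from Strichartz's paper \cite{ref:Strichartz} (stated in the form most convenient here, for compactly supported $f$) and is used as a black box. So there is no internal proof to compare against, and I will instead assess your argument on its own.

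Your proof is the standard square-function/Carleson-measure characterisation of $\BMO$, specialised to the non-smooth kernel $\phi = \chi_{(-1,0)} - \chi_{(0,1)}$, and it is essentially correct. The identity $\Delta_2 f(x,h) = (\phi_h\ast f')(x)$ checks out, as does the computation $\widehat{\phi}(\xi) = 2i\sin^2(\pi\xi)/(\pi\xi)$, giving $|\widehat{\phi}(\xi)|\simeq\min(|\xi|,|\xi|^{-1})$ and $0<c_\phi<\infty$. The forward direction is complete: the three-way split $g = g_{\tilde I} + g_1 + g_2$ is the standard one, the constant and far-field pieces vanish under $\phi_h\ast{}$ exactly as you say, and Plancherel on $g_1$ yields the Carleson bound with constant $\lesssim\norm{g}{\BMO}^2$. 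The converse via Calderón reproducing and $H^1$--$\BMO$ (or $T^1_2$--$T^\infty_2$) duality is the right route; you correctly identify the two genuine technical points, namely that $\phi$ is only BV (handled by the $L^1$-modulus bound $\norm{\phi_h(\cdot-\tau)-\phi_h}{L^1}\lesssim|\tau|/h$, which suffices for the $T^1_2$ estimate on $H^1$ atoms), and that one must first manufacture $g$ as a locally integrable function from the a~priori merely distributional $f'$ before the pairing $\int g\,b$ makes sense. One way to discharge that last point cleanly, rather than circling back through the reproducing formula, is to note that the Carleson condition already implies $\int_I\int_0^{|I|}|\Delta_2 f|^2\,dh\,dx/h<\infty$ for every $I$, which gives $f\in W^{1,2}_{\mathrm{loc}}$ by the square-function characterisation of Sobolev spaces (the same fact cited from \cite{ref:Nicolau} in Section~\ref{sec:Sobolev}); then $g=f'$ exists as an $L^2_{\mathrm{loc}}$ function and the duality argument runs without further ado. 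With that small completion your argument is a valid proof; it is also, in spirit, the Littlewood--Paley argument Strichartz uses, so the approach is not genuinely different from the source.

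Two minor remarks: the closing observation identifying $\Delta_2 f(c_J,|J|/2)$ with a Haar coefficient of $g$ is correct and is exactly the dyadic mechanism the paper exploits in Section~\ref{sec:Dyadic} (compare the passage from \eqref{eq:AGMartingale} to \eqref{eq:BMOMartingale}); and your phrase \enquote{a compactly supported $g$} should read \enquote{a compactly supported $f$} --- what is compactly supported is $f$, and hence its distributional derivative $g=f'$, which is what makes $\int g\,b$ meaningful in the duality step.
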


  One of the main goals of this article is to give an analog of Theorem \ref{thm:Strichartz} for functions in the closure $\clIBMO$ in the Zygmund seminorm $\norm{\cdot}{\ast},$ for which we will consider the pseudometric $\dist(f,g) = \norm{f-g}{\ast}$ for any pair of functions $f,g \in \zyg.$
  To this end, from now on for a given function $f \in \zyg$ and $\varepsilon > 0,$ consider the set
  \begin{equation*}
    A(f,\varepsilon) = \{(x,h) \in \reals^2_{+} \colon |\Delta_2f(x,h)| > \varepsilon\},
  \end{equation*}
  where we use $\reals^2_+$ to denote the upper halfplane $\reals^2_+ = \{(x,h)\colon x \in \reals, h > 0\}.$
  \begin{thm}
    \label{thm:IBMODistance}
    Let $f$ be a compactly supported function in $\zyg.$
    For each $\varepsilon > 0,$ consider
    \begin{equation*}
      C(f,\varepsilon) = \sup_{I} \frac{1}{|I|} \int_I\int_0^{|I|} \chi_{A(f,\varepsilon)}(x,h)\frac{dh\, dx}{h},
    \end{equation*}
    where $I$ ranges over all finite intervals.
    Then,
    \begin{equation}
      \label{eq:IBMODistance}
      \dist(f,\IBMO) \simeq \inf \{\varepsilon > 0 \colon C(f,\varepsilon) < \infty\}.
    \end{equation}
  \end{thm}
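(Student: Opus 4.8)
\emph{Plan.} Put $\varepsilon_0 := \inf\{\varepsilon > 0 : C(f,\varepsilon) < \infty\}$ and prove $\dist(f,\IBMO) \gtrsim \varepsilon_0$ and $\dist(f,\IBMO) \lesssim \varepsilon_0$ separately. The lower bound is the easy half: if $g \in \IBMO$ and $\norm{f-g}{\ast} < \varepsilon$, then $|\Delta_2(f-g)(x,h)| \le \norm{f-g}{\ast} < \varepsilon$ for all $(x,h)$, so $A(f,2\varepsilon) \subseteq A(g,\varepsilon)$ and hence $\chi_{A(f,2\varepsilon)}(x,h) \le \varepsilon^{-2}|\Delta_2 g(x,h)|^2$. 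Since $\Delta_2 g(x,h)$ is the difference of the means of $g'$ over $(x,x+h)$ and $(x-h,x)$, the John--Nirenberg inequality gives $\frac{1}{|I|}\int_I\int_0^{|I|}|\Delta_2 g(x,h)|^2\,\frac{dh\,dx}{h} \lesssim \norm{g'}{\BMO}^2$ for every interval $I$ (this is the proof of the easy implication of Theorem \ref{thm:Strichartz}, and it needs no compact support). Therefore $C(f,2\varepsilon) \lesssim \varepsilon^{-2}\norm{g'}{\BMO}^2 < \infty$, and letting $\norm{f-g}{\ast} \downarrow \dist(f,\IBMO)$ yields $\varepsilon_0 \le 2\,\dist(f,\IBMO)$.

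\medskip

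For the reverse inequality it is enough to show that whenever $C(f,c\varepsilon) < \infty$, for a small absolute constant $c$ to be fixed, there exists $g \in \IBMO$ with $\norm{f-g}{\ast} \lesssim \varepsilon$; letting $\varepsilon \downarrow c^{-1}\varepsilon_0$ then gives $\dist(f,\IBMO) \lesssim \varepsilon_0$. I would use the Faber--Schauder expansion $f = \sum_I c_I\phi_I$ over a fixed dyadic grid, where $\phi_I$ is the tent on the dyadic interval $I$ and $|c_I| \simeq |I|\,|\Delta_2 f(x_I,|I|/2)|$ ($x_I$ the centre of $I$), so that $\norm{h}{\ast} \simeq \sup_I |c_I|/|I|$ for every $h = \sum_I c_I\phi_I$ with bounded right-hand side. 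A quantitative regularity property of Zygmund functions --- that $(x,h) \mapsto \Delta_2 f(x,h)$ varies slowly at scale $h$, which one may see via the harmonic extension --- shows that each $I$ with $|c_I|/|I| > \varepsilon$ forces a fixed proportion of the Whitney region of $I$ to lie in $A(f,c\varepsilon)$; as these regions (over distinct $I$) overlap boundedly, the family $\mathcal{B} := \{I : |c_I|/|I| > \varepsilon\}$ obeys a Carleson packing bound $\sum_{I \subseteq J,\, I \in \mathcal{B}} |I| \le K|J|$ for all intervals $J$, with $K$ controlled by $C(f,c\varepsilon)$ and $\norm{f}{\ast}/\varepsilon$. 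Now set $g := \sum_{I \in \mathcal{B}} c_I\phi_I$, so that $f - g = \sum_{I \notin \mathcal{B}} c_I\phi_I$ has every normalised coefficient at most $\varepsilon$ and hence $\norm{f-g}{\ast} \lesssim \varepsilon$.

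\medskip

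It remains to check $g \in \IBMO$. As $g$ is again compactly supported, by Theorem \ref{thm:Strichartz} it suffices to bound $\sup_J \frac{1}{|J|}\int_J\int_0^{|J|}|\Delta_2 g(x,h)|^2\,\frac{dh\,dx}{h}$. Expanding $\Delta_2 g = \sum_{I \in \mathcal{B}} c_I\,\Delta_2\phi_I$, one uses that the blocks $\Delta_2\phi_I$ on $\reals^2_+$ are localised near the Whitney region of $I$, have $\int\!\!\int_{\reals^2_+}|\Delta_2\phi_I|^2\,\frac{dh\,dx}{h} \simeq |I|^{-1}$, and have Gram entries $\langle \Delta_2\phi_I,\Delta_2\phi_{I'}\rangle$ decaying geometrically in $|\log(|I|/|I'|)|$ and in the relative separation of $I,I'$; an almost-orthogonality (Cotlar--Stein) argument together with the packing of $\mathcal{B}$ then yields $\int_J\int_0^{|J|}|\Delta_2 g|^2\,\frac{dh\,dx}{h} \lesssim \sum_{I \subseteq 3J,\, I \in \mathcal{B}} |c_I|^2/|I| + \norm{f}{\ast}^2|J| \lesssim \norm{f}{\ast}^2 K|J|$, whence $g \in \IBMO$.

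\medskip

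\emph{Main obstacle.} The two genuinely delicate points are both in the reverse inequality. One is the passage from the hypothesis $C(f,c\varepsilon) < \infty$ --- a Carleson bound on the region $A(f,c\varepsilon)$ --- to the Carleson packing of the discrete family $\mathcal{B}$, which rests on a uniform regularity estimate for $(x,h) \mapsto \Delta_2 f(x,h)$ at scale $h$ and is what forces $\varepsilon$ to be a (harmless) multiple of $\varepsilon_0$. The other, and the technical heart, is to verify that one and the \emph{same} $g$ is both $O(\varepsilon)$-close to $f$ in the Zygmund seminorm --- the no-logarithmic-loss reconstruction inequality $\norm{\sum c_I\phi_I}{\ast} \lesssim \sup_I |c_I|/|I|$ applied to the discarded terms --- and lies in $\IBMO$, for which the almost-orthogonality of the $\Delta_2\phi_I$ must be married carefully to the packing condition (and, if one argues through $g' \in \BMO$ directly rather than through the integral criterion of Theorem \ref{thm:Strichartz}, the step from dyadic to honest $\BMO$ --- e.g. by symmetrising over the standard shifted grids --- must be addressed as well).
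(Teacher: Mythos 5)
Your lower bound is the same argument as the paper's and is correct: for $g\in\IBMO$ with $\norm{f-g}{\ast}<\varepsilon$ one has $A(f,2\varepsilon)\subseteq A(g,\varepsilon)$, so $\chi_{A(f,2\varepsilon)}\le\varepsilon^{-2}|\Delta_2 g|^2$, and Theorem~\ref{thm:Strichartz} applied to $g$ gives $C(f,2\varepsilon)<\infty$. That part is fine and is exactly how the paper handles it.

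The upper bound, however, rests on a statement that is false, and it is the very statement you single out as the ``technical heart.'' You claim a no-logarithmic-loss reconstruction inequality $\norm{\sum_I c_I\phi_I}{\ast}\lesssim\sup_I|c_I|/|I|$, and you apply it to $f-g=\sum_{I\notin\mathcal{B}}c_I\phi_I$ over a \emph{single fixed dyadic grid}. Since $|c_I|/|I|\simeq|\Delta_2 f(I)|$ for dyadic $I$, the right-hand side is (up to constants) the dyadic Zygmund seminorm; so the claimed inequality asserts $\norm{\cdot}{\ast}\lesssim\norm{\cdot}{\ast d}$, i.e.\ $\zygdyc\subseteq\zyg$. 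That inclusion fails --- the paper explicitly notes $\zyg\subsetneq\zygdyc$ --- and it fails for a concrete reason: the slope changes of the Faber--Schauder partial sums at a fixed dyadic point $p$ receive a contribution of size $\simeq|c_I|/|I|$ from \emph{every} dyadic $I$ having $p$ as an endpoint, and these can resonate across $\sim\log(1/h)$ scales, producing $|\Delta_2 h(p,2^{-N})|\gtrsim N$ even when all normalized coefficients are $O(1)$. Choosing $\phi_I$-coefficients $c_I=|I|$ for every dyadic $I$ with endpoint $1/2$ (and $0$ otherwise) gives such an example; the family also has Carleson packing constant $O(1)$, so packing does not rule it out. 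Thus nothing in your argument prevents $\norm{f-g}{\ast}$ from being comparable to $\varepsilon\log(\cdots)$ rather than $\varepsilon$. You flag ``symmetrising over the standard shifted grids'' as something that might be needed for the $g\in\IBMO$ step, but the translation-averaging is equally essential --- in fact it is the whole point --- for the $\norm{f-g}{\ast}\lesssim\varepsilon$ step, and without it the route through a single Faber--Schauder grid cannot close.

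This is precisely where the paper's architecture diverges from yours. The paper first proves a purely dyadic approximation theorem (Theorem~\ref{thm:clibmod}), using the average growth martingale $S$ with the key advantage that the jumps $\Delta S_n$ are \emph{exactly} $L^2$-orthogonal --- so no almost-orthogonality/Cotlar--Stein lemma for the blocks $\Delta_2\phi_I$ is ever needed, which also sidesteps your second asserted-but-unproved step. It then translates the grid by $\alpha\in[-1,1]$, obtains a decomposition $f^{(\alpha)}=b^{(\alpha)}+t^{(\alpha)}$ in each translated dyadic system, and averages: Theorem~\ref{thm:DyadicBMOtoBMO} (Garnett--Jones) controls the $\BMO$ part, while Theorem~\ref{thm:zygAverages} --- proved via Lemmas~\ref{lemma:maximality}--\ref{lemma:CommonPredecessorSize} --- is exactly the mechanism that converts ``small dyadic Zygmund seminorm in every grid'' into ``small honest Zygmund seminorm'' and thereby repairs the logarithmic loss that sinks the single-grid argument. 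The passage from the continuous Carleson condition $C(f,\cdot)<\infty$ to the dyadic packing $D(f,\cdot)<\infty$, which you also flag, is in the paper the short step around \eqref{eq:DiscreteDistanceEstimate} using Lemma~\ref{lemma:DividedDifferencesVariation}; that part of your plan is sound in spirit. But the reconstruction inequality is not a delicate lemma awaiting verification --- it is false --- and until you build the translation-averaging into the $\norm{f-g}{\ast}$ estimate the proof does not go through.
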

  
  We deduce the following description of $\clIBMO.$
  
  \begin{corl}
    Let $f$ be a compactly supported function in $\zyg.$
    Then $f \in \clIBMO$ if and only if for every $\varepsilon > 0$ there exists a constant $C(\varepsilon) > 0$ such that
    \begin{equation*}
      \frac{1}{|I|} \int_I\int_0^{|I|} \chi_{A(f,\varepsilon)}(x,h) \frac{dh\, dx}{h} \leq C(\varepsilon),
    \end{equation*}
    for every finite interval $I.$
  \end{corl}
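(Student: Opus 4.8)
The plan is to deduce this corollary directly from Theorem \ref{thm:IBMODistance}, together with one elementary monotonicity observation, so that essentially all of the analytic content has already been established. First I would note that since $\dist(f,g) = \norm{f-g}{\ast}$ defines a pseudometric on $\zyg,$ a function $f \in \zyg$ lies in the closure $\clIBMO$ if and only if $\dist(f,\IBMO) = 0.$ By Theorem \ref{thm:IBMODistance} this holds exactly when
\begin{equation*}
  \inf\{\varepsilon > 0 \colon C(f,\varepsilon) < \infty\} = 0 .
\end{equation*}

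Next I would record the monotonicity of $\varepsilon \mapsto C(f,\varepsilon).$ If $0 < \varepsilon \le \varepsilon',$ then $A(f,\varepsilon') \subseteq A(f,\varepsilon),$ hence $\chi_{A(f,\varepsilon')} \le \chi_{A(f,\varepsilon)}$ pointwise on $\reals^2_+,$ and therefore $C(f,\varepsilon') \le C(f,\varepsilon).$ Thus the set $\{\varepsilon > 0 \colon C(f,\varepsilon) < \infty\}$ is an upper half-line; it is nonempty, because $|\Delta_2f(x,h)| \le \norm{f}{\ast}$ for all $(x,h) \in \reals^2_+,$ so $A(f,\varepsilon) = \varnothing$ and $C(f,\varepsilon) = 0$ whenever $\varepsilon \ge \norm{f}{\ast}.$ Consequently its infimum equals $0$ precisely when the set is all of $(0,\infty),$ i.e.\ precisely when $C(f,\varepsilon) < \infty$ for every $\varepsilon > 0.$

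Finally I would unwind the definition of $C(f,\varepsilon)$ as a supremum over all finite intervals $I$: the condition $C(f,\varepsilon) < \infty$ is by definition the statement that there exists a finite constant $C(\varepsilon) > 0$ with
\begin{equation*}
  \frac{1}{|I|}\int_I\int_0^{|I|} \chi_{A(f,\varepsilon)}(x,h)\,\frac{dh\,dx}{h} \le C(\varepsilon)
\end{equation*}
for every finite interval $I.$ Chaining the three steps gives the claimed equivalence. I do not expect any genuine obstacle in this argument; the only point requiring a moment's care is the monotonicity step that turns ``the infimum is zero'' into ``finite for every $\varepsilon$,'' and the real work is entirely contained in Theorem \ref{thm:IBMODistance}.
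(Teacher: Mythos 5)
Your argument is correct and is exactly the intended deduction: the paper presents the corollary as a direct consequence of Theorem \ref{thm:IBMODistance} without further proof, and the only non-trivial observation needed is the monotonicity of $\varepsilon \mapsto C(f,\varepsilon)$ (via $A(f,\varepsilon') \subseteq A(f,\varepsilon)$ for $\varepsilon' \geq \varepsilon$), which converts the vanishing of the infimum in \eqref{eq:IBMODistance} into finiteness of $C(f,\varepsilon)$ for all $\varepsilon > 0.$ You handle the implicit constants in $\simeq$ correctly, since a two-sided comparability to zero forces both sides to vanish.
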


  Observe that Theorem \ref{thm:IBMODistance} is actually a local result, and in this sense it can still be applied to functions that are not compactly supported by restricting to a finite interval.
  Hence, these results also hold for functions defined on the unit circle.
  It is worth mentioning that, for functions defined on the unit circle, the closure of the trigonometric polynomials in the Zygmund seminorm is the small Zygmund class (see \cite{ref:Zygmund}).
  Observe as well that Theorem \ref{thm:IBMODistance} also implies uniform approximation locally in the following sense.
  It is a well known fact (see for instance \cite{ref:JonssonWallin}) that for any function $f \in \zyg,$ and for any finite interval $I \subseteq \reals,$ there exists a polynomial $p_I$ of degree $1$ such that
  \begin{equation*}
    |f(x)-p_I(x)| \lesssim |I| \norm{f}{\ast},\quad x \in I.
  \end{equation*}
  Thus, if $f \in \zyg$ is compactly supported on an interval $I_0,$ there is $g \in \IBMO$ such that for any interval $I \subseteq I_0$ there exists a linear polynomial $p_I$ with
  \begin{equation*}
    |f(x)-(g+p_I)(x)| \lesssim |I| \dist(f,\IBMO),\quad x \in I.
  \end{equation*}

  The lower bound in \eqref{eq:IBMODistance} is easy, and the main part of the paper is devoted to prove the upper bound.
  We will first introduce a dyadic version of the Zygmund class, $\BMO$ and $\IBMO,$ and the corresponding notion for dyadic martingales.
  Then we state and prove a discrete version of \eqref{eq:IBMODistance}.
  Afterwards, an averaging argument of J.~Garnett and P.~Jones (see \cite{ref:GarnettJonesBMOAndDyadicBMO}) is used to prove the continuous result from the dyadic one.
  To this end, certain technical estimates are needed, which we have collected in Section \ref{sec:Preliminaries}.

  For $n \geq 0,$ let $\mathcal{D}_n = \{[k2^{-n},(k+1)2^{-n})\colon k \in \integers\}$ be the collection of dyadic intervals of length $2^{-n}.$
  For $n < 0,$ consider $m$ such that $n = -2m+1$ or $n = -2m,$ and let $t_n = (4^m-1)/3.$
  In this case, define $\mathcal{D}_n = \{[k2^{-n}-t_n,(k+1)2^{-n}-t_n)\colon k \in \integers\}.$
  Denote by $\mathcal{D} = \bigcup_{n \in \integers} \mathcal{D}_n.$
  We will call the intervals in $\mathcal{D}$ \emph{dyadic intervals.} This definition might look unnecessarily complicated for the dyadic intervals with $n < 0,$ where we add a translation by $t_n$ units with respect to the previous ones, but it will turn out to be convenient later on.
  The reason is that with this choice any finite interval $I \subset \reals$ is contained in some interval of $\mathcal{D},$ which is not true if we do not include any such translations.

  A locally integrable function $f$ has \emph{dyadic bounded mean oscillation,} $f \in \BMO_d,$ if condition \eqref{eq:BMONorm} is required only for dyadic intervals, that is, if
  \begin{equation*}
    \norm{f}{\BMO d} = \sup_{I \in \mathcal{D}} \left(\frac{1}{|I|} \int_I |f(x)-f_I|^2 \, dx\right)^{1/2} < \infty.
  \end{equation*}
  Note that $\BMO \subset \BMO_d.$
  The space $\BMO_d$ has been studied as a natural discrete substitute of $\BMO$ (see, for instance, \cite{ref:GarnettJonesBMOAndDyadicBMO}, \cite{ref:Mei} and \cite{ref:Conde}).
  The following result is stated in \cite{ref:GarnettJonesBMOAndDyadicBMO} and summarises the averaging technique previously mentioned.

  \begin{thmA}[J.~Garnett, P.~Jones]
    \label{thm:DyadicBMOtoBMO}
    Suppose that $\alpha \mapsto b^{(\alpha)}$ is a measurable mapping from $\reals$ to $BMO_d$ such that all $b^{(\alpha)}$ are supported on a fixed dyadic interval $I_0,$ and such that for every $\alpha,$ $\norm{b^{(\alpha)}}{BMO d} \leq 1$ and
    \begin{equation*}
      \int_\reals b^{(\alpha)}(x)\, dx = 0.
    \end{equation*}
    Then
    \begin{equation*}
      b_R(x) = \frac{1}{2R} \int_{-R}^R b^{(\alpha)}(x+\alpha)\, d\alpha
    \end{equation*}
    is in $BMO$ and there is a constant $C > 0$ such that $\norm{b_R}{BMO} \leq C$ for any $R \geq 1.$
  \end{thmA}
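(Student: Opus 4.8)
The plan is to bound the $L^2$ mean oscillation of $b_R$ directly over an arbitrary finite interval $J$, so that John--Nirenberg's lemma is not needed; all implied constants below are permitted to depend on $I_0$. First I would dispose of large intervals: since each $b^{(\alpha)}$ is supported on $I_0 \in \mathcal{D}$ and has vanishing integral, $(b^{(\alpha)})_{I_0} = 0$, whence
\begin{equation*}
  \frac{1}{|I_0|}\int_{I_0}|b^{(\alpha)}|^2 = \frac{1}{|I_0|}\int_{I_0}|b^{(\alpha)} - (b^{(\alpha)})_{I_0}|^2 \le \norm{b^{(\alpha)}}{\BMO d}^2 \le 1,
\end{equation*}
so $\norm{b^{(\alpha)}}{2}^2 \le |I_0|$ and, by Minkowski's integral inequality, $\norm{b_R}{2}^2 \le |I_0|$ as well. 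Hence $\big(\tfrac{1}{|J|}\int_J|b_R - (b_R)_J|^2\big)^{1/2} \le \big(\tfrac{1}{|J|}\int_J|b_R|^2\big)^{1/2} \le (|I_0|/|J|)^{1/2} \le 1$ whenever $|J| \ge |I_0|$, so from now on I fix $J$ with $|J| < |I_0|$.

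For such $J$, let $n$ be the integer with $2^{-n} \le |J| < 2^{1-n}$; the intervals of $\mathcal{D}_{n-1}$ then have length $2^{1-n} \in (|J|, 2|J|]$, and each translate $J+\alpha$ meets at most two adjacent ones, $Q_1^\alpha, Q_2^\alpha$. I would set $c_\alpha = (b^{(\alpha)})_{Q_1^\alpha}$ when $(J+\alpha)\cap I_0 \ne \emptyset$ and $c_\alpha = 0$ otherwise (the contribution then vanishing, since $b^{(\alpha)}$ is supported on $I_0$). Let $Q(\alpha)$ be the smallest interval of $\mathcal{D}$ containing $J+\alpha$; it exists by the covering property of $\mathcal{D}$, satisfies $|Q(\alpha)| \ge |J|$, and contains both $Q_1^\alpha$ and $Q_2^\alpha$ because distinct dyadic intervals of a nested family are nested or disjoint. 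Telescoping $(b^{(\alpha)})_{Q_i^\alpha} - (b^{(\alpha)})_{Q(\alpha)}$ along the chain of dyadic ancestors of $Q_i^\alpha$ in $Q(\alpha)$ --- a chain of at most $\log_2(|Q(\alpha)|/|J|) + O(1)$ links, each costing $\le 2\norm{b^{(\alpha)}}{\BMO d} \le 2$ --- shows $|(b^{(\alpha)})_{Q_1^\alpha} - (b^{(\alpha)})_{Q_2^\alpha}| \lesssim 1 + \log_+(|Q(\alpha)|/|J|)$, and splitting $J+\alpha$ at the common endpoint of $Q_1^\alpha, Q_2^\alpha$ and using $\norm{b^{(\alpha)}}{\BMO d} \le 1$ once more gives
\begin{equation*}
  \left(\frac{1}{|J|}\int_{J+\alpha}|b^{(\alpha)}(y) - c_\alpha|^2\,dy\right)^{1/2} \lesssim 1 + \log_+\frac{|Q(\alpha)|}{|J|}.
\end{equation*}
Then, with $c = \frac{1}{2R}\int_{-R}^R c_\alpha\,d\alpha$ (finite, by the measurability hypothesis), Minkowski's integral inequality together with the $L^2$-minimality of the mean yields
\begin{align*}
  \left(\frac{1}{|J|}\int_J|b_R - (b_R)_J|^2\right)^{1/2}
  &\le \frac{1}{2R}\int_{-R}^R \left(\frac{1}{|J|}\int_{J+\alpha}|b^{(\alpha)}(y) - c_\alpha|^2\,dy\right)^{1/2} d\alpha\\
  &\lesssim 1 + \frac{1}{2R}\int_{-R}^R \log_+\frac{|Q(\alpha)|}{|J|}\,d\alpha.
\end{align*}

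It then remains to prove that $\int_{-R}^R \log_+(|Q(\alpha)|/|J|)\,d\alpha \lesssim R$. For $i \ge 0$, if $|Q(\alpha)| \ge 2^{i+2}|J|$ then $J+\alpha$ cannot be contained in any interval of $\mathcal{D}_{n-1-i}$ (those have length in $(2^i|J|, 2^{i+1}|J|]$), so $J+\alpha$ contains in its interior a boundary point of $\mathcal{D}_{n-1-i}$; the set of such $\alpha$ is a union of intervals of length $|J|$ spaced $2^{i+1-n} > 2^i|J|$ apart, whence $|\{\alpha\in[-R,R]\colon |Q(\alpha)| \ge 2^{i+2}|J|\}| \lesssim 2^{-i}R + |J|$. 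Moreover the integrand vanishes unless $(J+\alpha)\cap I_0 \ne \emptyset$, in which case $Q(\alpha)$ lies inside a fixed interval of $\mathcal{D}$, of length $\Lambda = \Lambda(I_0) < \infty$. Summing the layer-cake decomposition of $\log_+$ over $0 \le i \lesssim \log_2(\Lambda/|J|)$ gives
\begin{equation*}
  \int_{-R}^R \log_+\frac{|Q(\alpha)|}{|J|}\,d\alpha \lesssim R\sum_{i \ge 0} 2^{-i} + |J|\,\log_2\frac{\Lambda}{|J|} \lesssim R + 1 \lesssim R,
\end{equation*}
where the last steps use that $t \mapsto t\log_+(\Lambda/t)$ is bounded on $(0,|I_0|)$ and that $R \ge 1$. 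Combined with the previous display this bounds $\big(\tfrac1{|J|}\int_J|b_R-(b_R)_J|^2\big)^{1/2}$ by a constant uniformly in $J$ and in $R \ge 1$, which is the desired estimate for $\norm{b_R}{\BMO}$.

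I expect the combinatorial estimate of the third paragraph --- or rather, the realisation underlying the second one that one must \emph{not} localise $b^{(\alpha)}(\cdot + \alpha)$ on $J$ by using the smallest dyadic interval $Q(\alpha)$ containing $J+\alpha$ directly --- to be the main obstacle: that interval has unbounded size (indeed $\int_{-R}^R |Q(\alpha)|\,d\alpha$ can be infinite relative to $R|J|$), so the crude bound $\tfrac1{|J|}\int_{J+\alpha}|b^{(\alpha)} - (b^{(\alpha)})_{Q(\alpha)}|^2 \lesssim |Q(\alpha)|/|J|$ is useless. Splitting $J+\alpha$ across a single grid boundary at scale $\approx |J|$ reduces the cost to the difference of two averages over equal-size dyadic intervals, which is only logarithmic in $|Q(\alpha)|/|J|$, and this logarithm is precisely mild enough to be integrable against the geometrically decaying measures of the ``bad'' sets $\{|Q(\alpha)| \ge 2^{i+2}|J|\}$. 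The remaining ingredients --- the reduction to $L^2$ oscillation via Minkowski's inequality and the treatment of large intervals $J$ --- are routine.
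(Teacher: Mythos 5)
Note first that the paper does not itself prove Theorem~\ref{thm:DyadicBMOtoBMO}: it is quoted as a result of Garnett and Jones and used as a black box, so there is no ``paper's own proof'' to compare against. Your argument is therefore a self-contained verification, and it is correct. The three ingredients --- (i) dispatch large $J$ using compact support and vanishing mean, (ii) localise $b^{(\alpha)}(\cdot+\alpha)$ on $J$ by splitting $J+\alpha$ across a single boundary of the grid $\mathcal{D}_{n-1}$ and telescoping dyadic averages up to $Q(\alpha)$, paying only a logarithm of $|Q(\alpha)|/|J|$, and (iii) the measure estimate $|\{\alpha\in[-R,R]\colon|Q(\alpha)|\ge 2^{i+2}|J|\}|\lesssim 2^{-i}R+|J|$, which makes that logarithm integrable in $\alpha$ --- fit together as you describe, and all implicit constants are uniform in $R\ge 1$ and in $J$ (and absolute once one normalises $I_0=[0,1]$ by scale invariance of $\BMO$). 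Two small points worth tightening in a final write-up: the case where $J+\alpha$ lies inside a single $\mathcal{D}_{n-1}$ interval (so $Q_2^\alpha$ is absent) should be mentioned, and the integral $\int_{-R}^R\log_+(|Q(\alpha)|/|J|)\,d\alpha$ must be explicitly restricted to $\{\alpha\colon(J+\alpha)\cap I_0\ne\emptyset\}$, since $|Q(\alpha)|$ is unbounded outside that set; you do flag both, but somewhat in passing.

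It is also worth pointing out that your step (iii) is, in spirit, exactly Lemma~\ref{lemma:CommonPredecessorSize} of the paper, which plays the same role in the proof of the Zygmund analogue (Theorem~\ref{thm:zygAverages}): in both cases the averaging over $\alpha$ works because the set of shifts for which the minimal containing (or minimal common) dyadic interval is large by a factor $2^i$ has measure decaying like $2^{-i}R$. Your decomposition in step (ii) is the $\BMO$ counterpart of the covering $\mathcal{F}(I)$ and the dyadic-distance telescoping in Lemmas~\ref{lemma:maximality}--\ref{lemma:DividedDifferencesDyadicDistance}. So while the paper does not prove Theorem~\ref{thm:DyadicBMOtoBMO}, your proof is structurally parallel to the one it \emph{does} give for the Zygmund averaging statement, and correctly identifies the combinatorial measure estimate as the crux.
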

  
  We shall need an analogous result for the Zygmund class.
  We say that a continuous function $f$ belongs to the \emph{dyadic Zygmund class,} $f \in \zygdyc,$ if
  \begin{equation*}
    \norm{f}{\ast d} = \sup_{I \in \mathcal{D}} |\Delta_2f(I)| < +\infty.
  \end{equation*}
  Observe as well that $\zyg \subsetneq \zygdyc.$
  
  \begin{thm}
    \label{thm:zygAverages}
    Suppose that $\alpha \mapsto t^{(\alpha)}$ is a measurable mapping from $\reals$ to $\zygdyc$ such that all $t^{(\alpha)}$ are supported on a fixed dyadic interval $I_0,$ and such that for every $\alpha,$ $\norm{t^{(\alpha)}}{\ast d} \leq 1.$
    Then, the function
    \begin{equation*}
      t_R(x) = \frac{1}{2R} \int_{-R}^R t^{(\alpha)}(x+\alpha)\, d\alpha, \quad x\in\reals
    \end{equation*}
    is in $\zyg$ and there is a constant $C > 0$ such that $\norm{t_R}{\ast} \leq C$ for any $R \geq 1.$
  \end{thm}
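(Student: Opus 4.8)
The plan is to reduce the bound on $\norm{t_R}{\ast}$ to a uniform bound on finitely many dyadic averages and then exploit the cancellation that comes from sliding the dyadic grid. Fix $x \in \reals$ and $h > 0$; I want to estimate $|\Delta_2 t_R(x,h)|$. Since $\Delta_2$ is linear and commutes with translations in the $\alpha$-variable,
\begin{equation*}
  \Delta_2 t_R(x,h) = \frac{1}{2R}\int_{-R}^{R} \Delta_2 t^{(\alpha)}(x+\alpha,h)\, d\alpha,
\end{equation*}
so it suffices to control the average over $\alpha$ of $\Delta_2 t^{(\alpha)}(y,h)$ with $y = x+\alpha$. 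The first step is the standard reduction for dyadic Zygmund functions: for a fixed scale, the interval $(y-h,y+h)$ of length $2h$ is comparable in size to a dyadic interval, and one can write the second difference on $(y-h,y+h)$ as a bounded combination (with absolute constants) of second differences $\Delta_2 t^{(\alpha)}(J)$ over dyadic intervals $J \in \mathcal D$ of length comparable to $h$, together with a telescoping of first differences across adjacent dyadic intervals. Each $\Delta_2 t^{(\alpha)}(J)$ is bounded by $\norm{t^{(\alpha)}}{\ast d}\le 1$ in absolute value, but a naive triangle inequality loses a factor like $\log(1/h)$ or worse from the telescoping; the whole point of averaging in $\alpha$ is to recover that loss.

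The key step, therefore, is to carry out the $\alpha$-average before estimating. For a fixed $h$, as $\alpha$ ranges over an interval of length $2R$, the point $y = x+\alpha$ ranges over an interval of the same length, and the dyadic interval of $\mathcal D_n$ (with $2^{-n}\simeq h$) containing $y$ changes. Crucially, averaging $t^{(\alpha)}(x+\alpha)$ over $\alpha$ is the same as averaging a fixed dyadic-Zygmund profile against a sliding grid, and the quantity $\Delta_2 t^{(\alpha)}(x+\alpha, h)$, viewed as a function of $\alpha$, has a definite sign structure only on dyadic sub-blocks. I would decompose the $\alpha$-integral over $[-R,R]$ into the dyadic intervals of an appropriate generation and, on each such block, use that the endpoints $x\pm h+\alpha$ and the center $x+\alpha$ lie in a controlled configuration of dyadic intervals, so that the second difference telescopes into first differences of $t^{(\alpha)}$ between consecutive dyadic intervals of the \emph{same} generation. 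Then I average those first differences in $\alpha$: a first difference $t^{(\alpha)}(b) - t^{(\alpha)}(a)$ with $b-a\simeq h$ integrated against $d\alpha$ telescopes in a second, orthogonal direction, and the dyadic Zygmund bound $\norm{t^{(\alpha)}}{\ast d}\le 1$ controls each two-step increment. This is exactly the martingale/averaging mechanism behind Theorem \ref{thm:DyadicBMOtoBMO}: a single application of a dyadic estimate at one scale, with the sliding average killing the logarithmic pile-up.

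I expect the main obstacle to be the bookkeeping in this telescoping-plus-averaging argument, in particular making precise the claim that $\frac{1}{2R}\int_{-R}^R \Delta_2 t^{(\alpha)}(x+\alpha,h)\, d\alpha$ is bounded independently of $h$ and $R$ when $h$ is small compared to $R$; and separately handling the regime $h \gtrsim R$ (and $h \gtrsim |I_0|$), where $t_R$ is essentially the average of functions supported in a fixed bounded set so that $\Delta_2 t_R(x,h)$ is controlled directly by the $L^\infty$ bound $\norm{t^{(\alpha)}}{\infty}\lesssim |I_0|\norm{t^{(\alpha)}}{\ast d}$, which follows from $t^{(\alpha)}$ being supported on $I_0$ with zero (implicitly, by support) boundary behavior. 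A clean way to organize the first regime is to note that for fixed $h$ the map $\alpha \mapsto t^{(\alpha)}(x+\alpha)$ differs from the genuine continuous average only through the grid position, so one can first prove the estimate when $2^{-n}\le h < 2^{-n+1}$ for the specific dyadic generation and then observe that the constant does not depend on $n$. Once $|\Delta_2 t_R(x,h)|\lesssim 1$ is established uniformly in $x,h,R$, taking the supremum gives $\norm{t_R}{\ast}\le C$, and continuity of $t_R$ is immediate from dominated convergence since all $t^{(\alpha)}$ are uniformly bounded and supported in $I_0$.
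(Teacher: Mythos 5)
Your reduction — pass $\Delta_2$ inside the $\alpha$-integral, decompose the resulting second difference at scale $h$ into dyadic pieces for the shifted grid $\mathcal{D}^\alpha$, note that a naive triangle inequality picks up a logarithmic factor, and then try to recover it from the $\alpha$-average — is the right shape and agrees with the paper up to the point where the log loss must actually be killed. But the mechanism you propose for killing it does not work. You appeal to a ``definite sign structure'' of $\alpha\mapsto\Delta_2 t^{(\alpha)}(x+\alpha,h)$ and to first differences that ``telescope in a second, orthogonal direction'' when integrated in $\alpha$. There is no such cancellation available: the functions $t^{(\alpha)}$ for different $\alpha$ are completely unrelated (the only common information is the uniform bound $\norm{t^{(\alpha)}}{\ast d}\le1$), so $t^{(\alpha)}(b)-t^{(\alpha)}(a)$ does not telescope, and there is no sign or orthogonality structure in $\alpha$ to exploit. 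Nothing in your sketch produces a summable bound for the contribution of the deep dyadic generations.

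The actual mechanism is measure-theoretic, not cancellation-theoretic, and this is the missing piece. Write $\tilde I=I-|I|$ and reduce to bounding $\Delta_1 t_R(I)-\Delta_1 t_R(\tilde I)$ (this equals $\Delta_2 t_R$ at the common endpoint). For each fixed $\alpha$, cover $I$ and $\tilde I$ by maximal $\mathcal{D}^\alpha$-dyadic pieces arranged so that the $j$-th pieces have \emph{equal} lengths (Lemma~\ref{lemma:EqualLengths}); then Lemma~\ref{lemma:DividedDifferencesDyadicDistance} bounds the mismatch of first differences on matched pieces by the dyadic distance, which is controlled by $\log\bigl(|P_\alpha(I,\tilde I)|/|I|\bigr)$ where $P_\alpha(I,\tilde I)$ is the minimal common $\mathcal{D}^\alpha$-predecessor. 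This quantity can indeed be large for bad $\alpha$ — but the crucial input is Lemma~\ref{lemma:CommonPredecessorSize}: the set of $\alpha\in[-R,R]$ for which $|P_\alpha(I,\tilde I)|\simeq 2^k|I|$ has measure $\lesssim R\,2^{-k}$. It is this exponential smallness of the bad set, integrated against the at-most-linear-in-$k$ pointwise bound, that makes the $\alpha$-average finite and $R$-independent (and it is where the hypothesis $R\ge1$ enters). Without this quantitative estimate on how rarely the shifted grid is badly placed relative to $I\cup\tilde I$, your argument cannot be completed; the rest of your sketch (the $h\gtrsim R$ regime, continuity of $t_R$ by dominated convergence) is fine but peripheral.
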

  
  As an application of the techniques exposed in the article, we also show a result similar to Theorem \ref{thm:IBMODistance} for Sobolev spaces.
  For $1 < p < \infty,$ we consider the Sobolev space $\sobolev{1}{p}$ of functions $f \in \Lp{p}$ whose derivative $f'$ in the distributional sense is also in $\Lp{p}.$
  Take then the subspace of the Zygmund class $\zyg^p = \sobolev{1}{p} \cap \zyg.$
  The next theorem gives estimates for distances to this subspace.
  Here, for $x\in\reals,$ $\Gamma(x)$ denotes the truncated cone defined as $\Gamma(x) = \{(t,h) \in \reals^2_+ \colon |x-t| < h,\, 0 < h < 1\}.$
  \begin{thm}
    \label{thm:sobolev}
    Let $f$ be a compactly supported function in $\zyg.$
    For each $\varepsilon > 0,$ define the function
    \begin{equation*}
      C(f,\varepsilon)(x) = \left( \int_{\Gamma(x)} \chi_{A(f,\varepsilon)}(s,t) \frac{ds\, dt}{t^2} \right)^{1/2},
      \quad x \in \reals.
    \end{equation*}
    Then,
    \begin{equation}
      \label{eq:SobolevDistance}
      \dist(f,\zyg^p) \simeq \inf \{\varepsilon > 0 \colon C(f,\varepsilon) \in \Lp{p}\}.
    \end{equation}
  \end{thm}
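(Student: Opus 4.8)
The plan is to adapt the scheme used for Theorem~\ref{thm:IBMODistance} to the Sobolev setting, replacing Carleson-type conditions on the upper halfplane by nontangential square function conditions. As before, the proof splits into a lower bound and an upper bound, with the upper bound being the substantial direction.

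\textbf{Lower bound.} Suppose $g \in \zyg^p$ and write $\varepsilon_0 = \norm{f-g}{\ast}$, so that $|\Delta_2(f-g)(x,h)| \leq \varepsilon_0$ for all $(x,h)\in\reals^2_+$. Fix $\varepsilon > \varepsilon_0$; then $A(f,\varepsilon) \subseteq A(g,\varepsilon-\varepsilon_0)$, so it suffices to show that for a function $g \in \zyg^p$ and any $\delta > 0$ the function
\begin{equation*}
  x \mapsto \left(\int_{\Gamma(x)} \chi_{A(g,\delta)}(s,t)\, \frac{ds\,dt}{t^2}\right)^{1/2}
\end{equation*}
belongs to $\Lp{p}$. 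Here I would use that $g \in \sobolev{1}{p}$ together with the pointwise bound $|\Delta_2 g(s,t)| \lesssim \frac{1}{t}\int_{s-t}^{s+t} |g'(u) - (g')_{(s-t,s+t)}|\,du$, which on the set $A(g,\delta)$ forces the local oscillation of $g'$ to be $\gtrsim \delta$. The square-function integral over $\Gamma(x)$ is then controlled by a Lusin-area / $g$-function expression for $g'$, which is in $\Lp{p}$ by the standard Littlewood--Paley inequality for $\sobolev{1}{p}$ functions (equivalently, since the square function $(\int_{\Gamma(x)} |\nabla u(s,t)|^2 t\, ds\, dt)^{1/2}$ of the Poisson extension $u$ of $g'$ has $\Lp{p}$ norm comparable to $\norm{g'}{\Lp{p}}$). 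This gives $\dist(f,\zyg^p) \gtrsim \inf\{\varepsilon : C(f,\varepsilon) \in \Lp{p}\}$.

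\textbf{Upper bound.} This is where the dyadic machinery and the averaging theorem (Theorem~\ref{thm:zygAverages}) enter. Assume $\varepsilon > 0$ is such that $C(f,\varepsilon) \in \Lp{p}$; I must produce $g \in \zyg^p$ with $\norm{f-g}{\ast} \lesssim \varepsilon$. Following the strategy for Theorem~\ref{thm:IBMODistance}, I would first prove a dyadic analogue: given the dyadic second differences of $f$, decompose $f = g_d + b_d$ where $b_d$ collects the contribution of the dyadic "bad" squares where $|\Delta_2 f| > \varepsilon$ and $g_d$ is the good part with dyadic Zygmund seminorm $\lesssim \varepsilon$; one then checks that $g_d$, suitably normalised, lies in a dyadic Sobolev-type class whose square function is controlled pointwise by $C(f,\varepsilon)$-like quantities built from the relevant dyadic cone. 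The key point is to set this up so that the $\Lp{p}$ norm of the dyadic square function of $g_d'$ is $\lesssim \norm{C(f,\varepsilon)}{\Lp{p}} + \varepsilon$, uniformly over the choice of dyadic grid. Then, applying the Garnett--Jones averaging argument (in the Zygmund-adapted form of Theorem~\ref{thm:zygAverages}, and its $\Lp{p}$-valued analogue for the square function) over translates of the dyadic lattice, the averaged function $g = \text{avg}_\alpha g_d^{(\alpha)}$ lies in $\zyg$ with $\norm{f-g}{\ast} \lesssim \varepsilon$, and the averaged square function estimate upgrades membership from dyadic Sobolev to the genuine $\sobolev{1}{p}$, so $g \in \zyg^p$.

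\textbf{Main obstacle.} The hardest part is the $\Lp{p}$ bookkeeping in the averaging step: Theorem~\ref{thm:zygAverages} as stated only controls the Zygmund seminorm of the average, but here I also need the nontangential square function of the derivative of the averaged good part to be in $\Lp{p}$, with norm comparable to $\norm{C(f,\varepsilon)}{\Lp{p}}$. This requires a vector-valued (or square-function-valued) version of the Garnett--Jones averaging lemma, for which one must show that the operation $\alpha \mapsto (\text{dyadic area function of } g_d^{(\alpha)})$ averages to the continuous area function without loss in $\Lp{p}$; the technical estimates of Section~\ref{sec:Preliminaries} comparing dyadic cones to Euclidean cones should be exactly what makes this go through, but verifying that the good-part decomposition can be performed coherently in $\alpha$ (measurability, uniform support, and uniform bounds) is the delicate point. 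A secondary difficulty is the converse construction on the dyadic side: one must choose the good part $g_d$ so that it is genuinely in $\sobolev{1}{p}$ and not merely Zygmund, which means the stopping-time decomposition selecting the bad squares must be compatible with an $\Lp{p}$ estimate rather than just a Carleson-measure estimate.
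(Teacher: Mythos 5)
Your overall scheme — a dyadic decomposition followed by the Garnett--Jones averaging, mirroring the proof of Theorem \ref{thm:IBMODistance} — is the one the paper uses, and your lower bound via the $L^p$ area-function characterization of $\sobolev{1}{p}$ is essentially the paper's: one uses the pointwise chain $\delta\, C(f,\varepsilon_1)(x) \le \delta\, C(b,\delta)(x) \le C(b)(x)$ together with the fact that $b \in \sobolev{1}{p}$ forces $C(b) \in \Lp{p}.$ Two remarks on the upper bound, one of which is a genuine gap.

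First, you have the roles of $g_d$ and $b_d$ swapped. The piece that collects the \emph{large} jumps, $b_d,$ is the one that lands in the dyadic Sobolev class: its quadratic characteristic is $(\#\{n \colon |\Delta S_n(x)| > \varepsilon\})^{1/2},$ dominated pointwise by quantities controlled by $C(f,\varepsilon),$ hence in $\Lp{p}.$ The small-jump piece $g_d$ has dyadic Zygmund seminorm $\leq \varepsilon$ and is the error you discard. The approximant must therefore be $b(x) = \frac12\int_{-1}^1 b^{(\alpha)}(x+\alpha)\,d\alpha,$ so that $f-b = \frac12\int_{-1}^1 t^{(\alpha)}(x+\alpha)\,d\alpha$ satisfies $\norm{f-b}{\ast}\lesssim\varepsilon$ by Theorem \ref{thm:zygAverages}.

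Second, and more substantively, the vector-valued or square-function-valued Garnett--Jones lemma that you flag as the main obstacle is not needed, and looking for it is what makes your route look harder than it is. Each $b^{(\alpha)}$ already lies in $\zygdyc^p \subset \sobolev{1}{p},$ with $\sobolev{1}{p}$-norm bounded uniformly in $\alpha$: the $\Lp{p}$ bound on the quadratic characteristic is translation invariant because $C(f^{(\alpha)},\varepsilon)(x) = C(f,\varepsilon)(x-\alpha),$ and the Burkholder--Davis--Gundy inequality \eqref{eq:BDGInequality} converts it into a uniform bound on $\norm{(b^{(\alpha)})'}{\Lp{p}}.$ Since $\sobolev{1}{p}$ is a Banach space and translation is an isometry on it, Minkowski's integral inequality alone shows that the averaged function $b$ lies in $\sobolev{1}{p}.$ Theorem \ref{thm:zygAverages} is then invoked only for the remainder $t,$ to control the Zygmund seminorm; there is no square function to transport through the average and no dyadic-versus-continuous cone comparison hidden in the averaging step. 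The correct order of operations is to average the functions themselves and only afterwards appeal to the area-function characterization of $\sobolev{1}{p},$ rather than trying to average the area functions.
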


  Finally, we find a higher dimensional analog of Theorem \ref{thm:IBMODistance} for Zygmund measures in $\reals^d.$
  Recall that a signed Borel measure $\mu$ on $\reals^d$ is called a Zygmund measure if
  \begin{equation*}
    \norm{\mu}{\ast} = \sup_Q \left| \frac{\mu(Q)}{|Q|} - \frac{\mu(Q^\ast)}{|Q^\ast|}  \right| < \infty,
  \end{equation*}
  where $Q$ ranges over all finite cubes in $\reals^d$ with edges parallel to the axis, and where $Q^\ast$ denotes the cube with the same centre as $Q$ but double side length.
  In the case $d = 1$ it is obvious that $\mu$ is a Zygmund measure if and only if its primitive $f(x) = \mu([0,x])$ is in the Zygmund class.
  Note that there exist Zygmund measures that are singular with respect to the Lebesgue measure, as J.~P.~Kahane showed \cite{ref:Kahane}.
  More information on Zygmund measures can be found in \cite{ref:MakarovSmoothMeasures}, \cite{ref:AndersonPitt} and \cite{ref:AleksandrovAndersonNicolau}.
  We consider the space of absolutely continuous measures $\nu$ such that $d\nu(x) = f(x)\, dx$ for some $f \in \BMO(\reals^d).$
  We call this the space of $\IBMO$ measures.
  It is clear that a measure in $\IBMO$ is a Zygmund measure as well.
  As before, given a Zygmund measure $\mu$ on $\reals^d,$ we want to describe the distance
  \begin{equation*}
    \dist(\mu,\IBMO) = \inf \{\norm{\mu-\sigma}{\ast}\colon \sigma \in \IBMO\}.
  \end{equation*}
  For $x \in \reals^d$ and $h > 0,$ let $Q(x,h)$ be the cube centred at $x$ of sidelength $h.$
  For a given Zygmund measure $\mu$ and for $\varepsilon > 0,$ consider the set
  \begin{equation*}
    A(\mu,\varepsilon) = \left\{ (x,h) \in \reals^{d+1}_+ \colon \left| \frac{\mu(Q(x,h))}{|Q(x,h)|} - \frac{\mu(Q(x,2h))}{|Q(x,2h)|} \right| > \varepsilon \right\},
  \end{equation*}
  where we use $\reals^{d+1}_+$ to denote the upper halfspace $\reals^{d+1}_+ = \{(x,h)\colon x \in \reals^d, h > 0\}.$
  \begin{thm}
    \label{thm:MeasureIBMODistance}
    Let $\mu$ be a compactly supported Zygmund measure on $\reals^d.$
    For each $\varepsilon > 0,$ consider
    \begin{equation*}
      C(\mu,\varepsilon) = \sup_{Q} \frac{1}{|Q|} \int_Q \int_0^{l(Q)} \chi_{A(\mu,\varepsilon)}(x,h)\, \frac{dh\, dx}{h},
    \end{equation*}
    where $Q$ ranges over all finite cubes and $l(Q)$ denotes the side length of $Q.$
    Then,
    \begin{equation*}
      \dist(\mu,\IBMO) \simeq \inf\{ \varepsilon > 0\colon C(\mu,\varepsilon) < \infty \}.
    \end{equation*}
  \end{thm}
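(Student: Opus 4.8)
The argument follows the pattern of the proof of Theorem~\ref{thm:IBMODistance}: one first proves a dyadic version and then transfers it to the continuous setting by the averaging method of Garnett and Jones. Here I indicate the main steps, stressing the points where cubes in $\reals^d$ and the passage to measures demand extra care. Regarding the \emph{lower bound for the distance}, the inequality $\inf\{\varepsilon\colon C(\mu,\varepsilon)<\infty\}\le\dist(\mu,\IBMO)$ is the easy half. Let $\sigma\in\IBMO$, $d\sigma=g\,dx$ with $g\in\BMO(\reals^d)$, and put $\delta=\norm{\mu-\sigma}{\ast}$. If $\varepsilon>\delta$ and $(x,h)\in A(\mu,\varepsilon)$, then writing $\mu=\sigma+(\mu-\sigma)$ and using that the Zygmund seminorm of $\mu-\sigma$ bounds the contribution of the second term by $\delta$, we get $|g_{Q(x,h)}-g_{Q(x,2h)}|>\varepsilon-\delta$; since $g_{Q(x,h)}-g_{Q(x,2h)}=(g\ast\psi_h)(x)$ for the fixed compactly supported mean--zero kernel $\psi=|Q(0,1)|^{-1}\chi_{Q(0,1)}-|Q(0,2)|^{-1}\chi_{Q(0,2)}$ with $\psi_h=h^{-d}\psi(\cdot/h)$, the standard Carleson measure estimate for $\BMO$ functions (the easy direction of the $\reals^d$ analogue of Theorem~\ref{thm:Strichartz}) gives
\begin{equation*}
  C(\mu,\varepsilon)\;\lesssim\;\frac{1}{(\varepsilon-\delta)^{2}}\,\norm{g}{\BMO}^{2}\;<\;\infty .
\end{equation*}
Taking the infimum over all admissible $\sigma$ yields the claim.

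\emph{The dyadic setting.} As in dimension one, one introduces dyadic cubes in $\reals^d$ (translated at large scales exactly as $\mathcal D$ is built, so that every cube is contained in a dyadic one), the dyadic Zygmund class of measures with $\norm{\mu}{\ast d}=\sup\bigl|\mu(Q')/|Q'|-\mu(Q)/|Q|\bigr|$, the supremum over a dyadic cube $Q$ and its $2^d$ children $Q'$, the space $\BMO_d(\reals^d)$, and the dyadic analogue of $\IBMO$ (measures $g\,dx$ with $g\in\BMO_d$). Associate to $\mu$ the dyadic martingale $a_Q=\mu(Q)/|Q|$; then $\mu$ is a dyadic Zygmund measure precisely when the increments $\Delta a_Q=a_Q-a_{\widehat Q}$ ($\widehat Q$ the dyadic parent) are uniformly bounded, and $g\,dx$ is a dyadic $\IBMO$ measure precisely when $\sum_{Q\subset R}|\Delta a_Q^{\,g}|^{2}|Q|\lesssim|R|$ for every dyadic $R$ (the square function description of $\BMO_d$).

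\emph{The discrete result.} One shows that for a compactly supported dyadic Zygmund measure $\mu$ one has $\dist_d(\mu,\IBMO_d)\simeq\inf\{\varepsilon\colon C_d(\mu,\varepsilon)<\infty\}$, with the obvious dyadic analogues $C_d$ and $A_d$; the lower bound is as above. For the upper bound, fix $\varepsilon$ with $C_d(\mu,\varepsilon)<\infty$ and call a dyadic cube $Q$ \emph{bad} when $|\Delta a_Q|>\varepsilon$; the hypothesis says exactly that the bad cubes satisfy the Carleson packing condition $\sum_{Q\subset R,\,Q\text{ bad}}|Q|\lesssim C_d(\mu,\varepsilon)|R|$. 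Define $\sigma$ via the dyadic martingale with increment $\Delta a_Q^{\,\sigma}=\Delta a_Q$ on bad cubes and $0$ on good ones (base value $0$ on a fixed cube containing $\operatorname{supp}\mu$). Then, since $|\Delta a_Q|\le\norm{\mu}{\ast d}$ always,
\begin{equation*}
  \sum_{Q\subset R}|\Delta a_Q^{\,\sigma}|^{2}|Q|=\sum_{Q\subset R,\,Q\text{ bad}}|\Delta a_Q|^{2}|Q|\;\lesssim\;\norm{\mu}{\ast d}^{2}\,C_d(\mu,\varepsilon)\,|R| ,
\end{equation*}
so $\sigma$ is a dyadic $\IBMO$ measure, whereas $\mu-\sigma$ has martingale increments bounded by $\varepsilon$, hence is a dyadic Zygmund measure with $\norm{\mu-\sigma}{\ast d}\lesssim\varepsilon$. (No convergence issue arises here: $\mu-\sigma$ is the difference of two measures.)

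\emph{From the dyadic to the continuous result, and the main difficulty.} Finally one runs the Garnett--Jones averaging argument as in the proof of Theorem~\ref{thm:IBMODistance}: for $\alpha\in\reals^d$ let $\mu_\alpha$ be the translate of $\mu$ by $\alpha$ and apply the discrete result in the standard grid to $\mu_\alpha$, obtaining measurably in $\alpha$ a dyadic $\IBMO$ measure $\sigma^{(\alpha)}$, supported in a fixed cube, with $\norm{\mu_\alpha-\sigma^{(\alpha)}}{\ast d}\lesssim\varepsilon$ and, after normalisation, density of $\BMO_d$ norm at most $1$; averaging $\sigma^{(\alpha)}$ over $\alpha$ in a large cube (and using that $\mu$ is recovered by averaging the appropriately shifted translates $\mu_\alpha$), the $\reals^d$ version of Theorem~\ref{thm:DyadicBMOtoBMO} gives $\sigma\in\IBMO$, while the $\reals^d$, cube version of Theorem~\ref{thm:zygAverages}, proved in the same way as in dimension one, gives that $\mu-\sigma$ is a genuine Zygmund measure with $\norm{\mu-\sigma}{\ast}\lesssim\varepsilon$; this is the desired inequality $\dist(\mu,\IBMO)\lesssim\inf\{\varepsilon\colon C(\mu,\varepsilon)<\infty\}$. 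The main difficulty lies in the technical estimates underlying this last step, which in $\reals^d$ become more delicate than in dimension one: a dyadic subcube sits in a \emph{corner} of its parent and the dilate $Q^{\ast}$ of a dyadic cube is not itself dyadic, so controlling $\norm{\mu}{\ast d}$ on every translated grid by $\norm{\mu}{\ast}$, and deducing $C_d(\mu,\varepsilon)<\infty$ on every translated grid from $C(\mu,c\varepsilon)<\infty$ for a suitable absolute constant $c$, requires the $\reals^d$ analogues of the chaining and doubling estimates gathered in Section~\ref{sec:Preliminaries}, together with the $\reals^d$ formulations of the two averaging theorems quoted above.
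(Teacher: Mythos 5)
Your overall scheme (lower bound via the Strichartz/Carleson characterisation of $\IBMO$ on $\reals^d$, then a dyadic approximation theorem, then transfer by the Garnett--Jones averaging trick, with the technical chaining estimates in the background) matches the paper's. The lower bound and the averaging step are essentially the paper's argument. But there is a genuine gap in your dyadic step that the paper takes care to avoid, and it is precisely the point where $d\geq 2$ differs from $d=1$.

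You define $\sigma$ by keeping the martingale increment $\Delta a_Q$ when $|\Delta a_Q|>\varepsilon$ and zeroing it otherwise, cube by cube. For this $\sigma$ to be a measure (equivalently, for the modified sequence to remain a dyadic martingale), the consistency condition $\sum_{Q'\ \text{child of}\ Q}\Delta a^{\,\sigma}_{Q'}\,|Q'|=0$ must hold for every parent $Q$. In the original martingale one has $\sum_{Q'}\Delta a_{Q'}|Q'|=0$, but once you zero out some of the $2^d$ siblings and keep the rest, this sum is in general no longer zero, so $\sigma$ is not a valid dyadic martingale and defines no measure. Your remark ``no convergence issue arises here: $\mu-\sigma$ is the difference of two measures'' assumes exactly what needs to be justified. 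In $d=1$ this problem is invisible because the two siblings of a dyadic interval have opposite increments of equal magnitude, so they are kept or dropped together; for $d\geq 2$ it is fatal to the naive construction.

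The paper circumvents this by working with the maximal dyadic second difference $\Delta_2^\ast\mu(Q)=\max_{Q'\subset Q,\ l(Q')=l(Q)/2}|\Delta_1\mu(Q')-\Delta_1\mu(Q)|$, both in the definition of $\norm{\cdot}{\ast d}$ and in the Carleson-type quantity $D(\mu,\varepsilon)=\sup_{Q}\tfrac{1}{|Q|}\sum_{R\subset Q,\ \Delta_2^\ast\mu(R^\ast)>\varepsilon}|R|$, and in Theorem~\ref{thm:MeasureIBMODyadic} sets $\Delta B(Q)=\Delta S(Q)$ whenever $\Delta^\ast S(Q^\ast)>\varepsilon$ and $\Delta B(Q)=0$ otherwise. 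Because this decision is made at the level of the parent $Q^\ast$, all siblings are kept or dropped together, so the martingale identity is preserved, $\sigma$ really is an $\IBMO_d$ measure, and $\norm{\mu-\sigma}{\ast d}\leq\varepsilon$. Since $\Delta_2^\ast\mu(R^\ast)>\varepsilon$ forces at least one sibling $R'$ of $R$ to satisfy $|\Delta a_{R'}|>\varepsilon$, the paper's $D(\mu,\varepsilon)$ differs from your $C_d(\mu,\varepsilon)$ only by a factor of $2^d$, so the final distance estimate is unchanged; but the maximal formulation is what makes the construction legitimate. You should replace the ``bad cube'' criterion $|\Delta a_Q|>\varepsilon$ by the parent-level criterion $\Delta_2^\ast\mu(Q^\ast)>\varepsilon$ and re-check that this still gives the Carleson packing and the bound on $\norm{\mu-\sigma}{\ast d}$; the rest of your outline then goes through.
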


  This paper is organised in the following manner.
  In Section \ref{sec:Preliminaries}, we expose the technical estimates that we need in order to apply the averaging argument previously mentioned.
  We then state and prove the dyadic analog of Theorem \ref{thm:IBMODistance} in Section \ref{sec:Dyadic}.
  In Section \ref{sec:Continuous}, we explain the averaging argument that yields Theorem \ref{thm:zygAverages} and then we use it to prove Theorem \ref{thm:IBMODistance}.
  Next, we explain in Section \ref{sec:Measures} the variations in the previous construction that allow us to prove Theorem \ref{thm:MeasureIBMODistance}.
  We devote Section \ref{sec:Sobolev} to the application of our methods, showing Theorem \ref{thm:sobolev}.
  Finally, in Section \ref{sec:OpenProblems} we state three open problems closely related to our results.

  It is a pleasure to thank Petros Galanopoulos, Oleg Ivrii and Martí Prats for several helpful conversations and interesting comments.
  
  \section{Preliminaries}
  \label{sec:Preliminaries}
  We need an auxiliary result that estimates the oscillation of the second divided differences when changing their centre and step size.
  For a continuous function $f$ we define its \emph{first divided difference} at $x \in \reals$ with step size $h > 0$ as
  \begin{equation*}
    \Delta_1f(x,h) = \frac{f(x+h)-f(x)}{h}.
  \end{equation*}
  For convenience, we may also denote $\Delta_1f(x,h) = \Delta_1f(I),$ where $I=(x,x+h).$
  
  \begin{lemma}
    \label{lemma:DividedDifferencesVariation}
    Let $f \in \zyg$ and assume that $h' > h > 0$ and $|x-t| < h'/2.$
    Then
    \begin{multline}
      \label{eq:DividedDifferencesVariation}
      |\Delta_2f(x,h)-\Delta_2f(t,h')| \lesssim \\
      \norm{f}{\ast} \left(\frac{h'-h}{h'} \left(1 + \log\frac{h'}{h'-h}\right) + \frac{|x-t|}{h'}\log\left( \frac{h'}{|x-t|}+1 \right) \right).
    \end{multline}
  \end{lemma}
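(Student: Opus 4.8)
The plan is to reduce \eqref{eq:DividedDifferencesVariation} to two elementary identities for the first divided difference of a continuous function $g$:
\[
  \Delta_1 g(c,2s) - \Delta_1 g(c,s) = -\frac{1}{2}\Delta_2 g(c+s,s), \qquad \Delta_1 g(c,s) - \Delta_1 g(c+s,s) = -\Delta_2 g(c+s,s),
\]
both obtained by expanding the definitions. If $g \in \zyg$ the right-hand sides are bounded by $\norm{g}{\ast}/2$ and $\norm{g}{\ast}$, so iterating these two moves (doubling the step until the smaller interval reaches the size of the larger, then a bounded number of further adjustments) yields the well-known estimate
\[
  |\Delta_1 g(I) - \Delta_1 g(J)| \lesssim \norm{g}{\ast}\left( 1 + \log\frac{|J|}{|I|} \right) \qquad \text{whenever } I \subseteq J,
\]
which is the only input used below. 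Normalise $\norm{f}{\ast} = 1$. If $h' \geq 2h$, then the left-hand side of \eqref{eq:DividedDifferencesVariation} is at most $2$ while its right-hand side is at least $1/2$, so \eqref{eq:DividedDifferencesVariation} holds trivially; hence we may assume $h < h' < 2h$, and in particular $h \simeq h'$. By the triangle inequality it suffices to bound separately the \emph{centre change} $|\Delta_2 f(x,h') - \Delta_2 f(t,h')|$ and the \emph{step change} $|\Delta_2 f(x,h) - \Delta_2 f(x,h')|$.

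For the centre change we may assume $x \neq t$; say $x > t$, the other case being symmetric, and set $\delta = x - t < h'/2$. From $f(x \pm h') - f(t \pm h') = \delta\,\Delta_1 f(t \pm h',\delta)$ and $f(x) - f(t) = \delta\,\Delta_1 f(t,\delta)$ we get
\[
  h'\left( \Delta_2 f(x,h') - \Delta_2 f(t,h') \right) = \delta\left[ \left( \Delta_1 f(t+h',\delta) - \Delta_1 f(t,\delta) \right) - \left( \Delta_1 f(t,\delta) - \Delta_1 f(t-h',\delta) \right) \right].
\]
Each inner difference is estimated by inserting $\Delta_1 f(t,h'+\delta)$, resp.\ $\Delta_1 f(t-h',h'+\delta)$: the pairs $[t+h',t+h'+\delta] \subseteq [t,t+h'+\delta]$, $[t,t+\delta] \subseteq [t,t+h'+\delta]$, $[t-h',t-h'+\delta] \subseteq [t-h',t+\delta]$ and $[t,t+\delta] \subseteq [t-h',t+\delta]$ are nested with ratio $(h'+\delta)/\delta$, so the estimate above bounds each inner difference by $\lesssim 1 + \log\frac{h'+\delta}{\delta}$. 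Since $\delta < h'/2$ one has $1 + \log\frac{h'+\delta}{\delta} \simeq \log\left( \frac{h'}{\delta} + 1 \right)$, and dividing by $h'$ gives $|\Delta_2 f(x,h') - \Delta_2 f(t,h')| \lesssim \frac{|x-t|}{h'}\log\left( \frac{h'}{|x-t|} + 1 \right)$, which is the second term on the right of \eqref{eq:DividedDifferencesVariation}.

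For the step change, put $g(s) = f(x+s) + f(x-s) - 2f(x)$, extended evenly to $\reals$. Expanding the definitions gives $\Delta_2 g(u,v) = \Delta_2 f(x+u,v) + \Delta_2 f(x-u,v)$, hence $\norm{g}{\ast} \leq 2$, while $g(0) = 0$ yields $\Delta_2 f(x,s) = g(s)/s = \Delta_1 g(0,s)$. With $\delta = h'-h \in (0,h)$, the algebraic identity
\[
  \Delta_1 g(0,h') - \Delta_1 g(0,h) = \frac{\delta}{h'}\left( \Delta_1 g(h,\delta) - \Delta_1 g(0,h) \right),
\]
which follows from $h\,g(h') - h'\,g(h) = h\delta\,\Delta_1 g(h,\delta) - \delta h\,\Delta_1 g(0,h)$, reduces matters to $|\Delta_1 g(h,\delta) - \Delta_1 g(0,h)|$. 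Inserting $\Delta_1 g(0,h+\delta)$ and using $[h,h+\delta] \subseteq [0,h+\delta]$ (ratio $(h+\delta)/\delta = h'/(h'-h)$) together with $[0,h] \subseteq [0,h+\delta]$ (ratio at most $2$), the estimate above gives $|\Delta_1 g(h,\delta) - \Delta_1 g(0,h)| \lesssim 1 + \log\frac{h'}{h'-h}$. Multiplying by $\delta/h'$ and using $h \simeq h'$ yields $|\Delta_2 f(x,h) - \Delta_2 f(x,h')| \lesssim \frac{h'-h}{h'}\left( 1 + \log\frac{h'}{h'-h} \right)$, which is the first term on the right of \eqref{eq:DividedDifferencesVariation}; adding the two estimates proves the lemma.

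The delicate point in each case is the order of operations: one must first peel off the exact linear factor $\delta/h'$---with $\delta = |x-t|$ in the centre change and $\delta = h'-h$ in the step change---by an algebraic identity, and only afterwards apply the logarithmic estimate to the surviving first-divided-difference comparison; comparing $\Delta_1 g(0,h)$ with $\Delta_1 g(0,h')$ directly does not capture the smallness of the left-hand side as $h' - h \to 0$. The reduction to $h < h' < 2h$ is what allows replacing $\log\frac{h+\delta}{\delta}$ by $\log\frac{h'}{h'-h}$, and the hypothesis $|x-t| < h'/2$ both keeps the inserted intervals of size comparable to $h'$ and makes $1 + \log\frac{h'}{|x-t|}$ comparable to $\log\left( \frac{h'}{|x-t|} + 1 \right)$.
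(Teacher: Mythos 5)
Your argument is correct, and structurally it mirrors the paper's proof exactly: split by the triangle inequality into a centre change at fixed step $h'$ and a step change at fixed centre $x$, peel off the exact linear factor ($\delta/h'$, resp.\ $(h'-h)/h'$) by an algebraic identity \emph{before} invoking a logarithmic comparison of first divided differences, then conclude. The paper's identity for the step change,
\begin{equation*}
  \Delta_2f(x,h)-\Delta_2f(x,h') = \frac{h'-h}{h'}\bigl[\Delta_2f(x,h) - \bigl(\Delta_1f(x+h,h'-h) - \Delta_1f(x-h',h'-h)\bigr)\bigr],
\end{equation*}
is formally different from yours (it avoids the auxiliary even function $g$ and holds for all $h'>h$, so the paper never needs your reduction to $h<h'<2h$), but it serves the same purpose.

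The one genuine difference is how the auxiliary logarithmic estimate for first divided differences is obtained. The paper proves a translated-interval version,
$|\Delta_1f(x,h)-\Delta_1f(t,h)| \lesssim \norm{f}{\ast}\log(|x-t|/h+1)$ for $|x-t|>h/2$,
by passing to the harmonic extension $u$ of $f$, using the Bloch-type bound $\sup h|\nabla u_x|\lesssim\norm{f}{\ast}$, and a hyperbolic-distance estimate. You instead prove a nested-interval version, $|\Delta_1 g(I)-\Delta_1 g(J)|\lesssim\norm{g}{\ast}(1+\log(|J|/|I|))$ for $I\subseteq J$, purely by iterating the two elementary doubling/shifting identities for $\Delta_1$. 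Your route is more self-contained and avoids harmonic analysis machinery; the paper's route is shorter given that the Bloch property of $u_x$ is in any case a standard tool in this circle of ideas and is also used elsewhere. (Minor typo: in your first identity the correct sign is $\Delta_1 g(c,2s)-\Delta_1 g(c,s)=+\tfrac12\Delta_2 g(c+s,s)$, which of course does not affect anything.)
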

  
  \begin{proof}
    We split the proof in two steps.
    First, we find an estimate for the case $h' = h$ and then another one for $x = t.$
    We start showing that, for $h > 0,$ when $|x-t| < h/2,$ then
    \begin{equation}
      \label{eq:VariationEqualStep}
      |\Delta_2f(x,h)-\Delta_2f(t,h)| \lesssim \norm{f}{\ast} \frac{|x-t|}{h} \log\left(\frac{h}{|x-t|} + 1\right).
    \end{equation}
    We claim that, if $|x-t| > h/2,$ then
    \begin{equation}
      \label{eq:FirstDividedDifferences}
      |\Delta_1f(x,h)-\Delta_1f(t,h)| \lesssim \norm{f}{\ast} \log\left(\frac{|x-t|}{h} + 1\right).
    \end{equation}
    Indeed, let $u$ be the harmonic extension of $f$ on the upper halfplane $\reals^2_{+}.$
    It is a well known fact (see Chapter V of \cite{ref:Stein} or \cite{ref:Llorente}) that
    \begin{equation*}
      \left| \frac{f(x+h)-f(x)}{h}-u_x(x,h) \right| \lesssim \norm{f}{\ast},
    \end{equation*}
    and that
    \begin{equation*}
      \sup_{(x,h) \in \reals^2_{+}} h |\nabla u_x(x,h)| \lesssim \norm{f}{\ast}.
    \end{equation*}
    Thus, if we denote by $\rho(a,b)$ the hyperbolic distance between two points $a,b \in \reals^2_{+},$ we get
    \begin{equation*}
      |u_x(x,h)-u_x(t,h)| \lesssim \norm{f}{\ast} \rho((x,h),(t,h)).
    \end{equation*}
    Using the estimate
    \begin{equation*}
      \rho((x,h),(t,h)) \lesssim \log\left(\frac{|x-t|}{h} + 1\right),
    \end{equation*}
    we get \eqref{eq:FirstDividedDifferences}.
    
    Now, assume $x > t$ without loss of generality, and $x-t < h/2.$
    Write
    \begin{align*}
      h(\Delta_2f(x,h)-\Delta_2f(t,h)) = &\left(f(x+h)-f(t+h)\right) - \left(f(x)-f(t)\right) \\
      + &\left(f(x-h)-f(t-h)\right) - \left(f(x)-f(t)\right)
    \end{align*}
    and apply \eqref{eq:FirstDividedDifferences} to the first two terms taking $x' = t+h,$ $t' = t$ and $h' = x-t,$ and to the last two taking $x' = t-h,$ $t' = t$ and $h' = x-t.$
    This shows \eqref{eq:VariationEqualStep}.
    
    Assume now that $h' > h > 0.$
    We want to see that
    \begin{equation}
      \label{eq:VariationEqualCentre}
      |\Delta_2f(x,h')-\Delta_2f(x,h)| \lesssim \norm{f}{\ast} \frac{h'-h}{h'} \left(1+\log\frac{h'}{h'-h}\right).
    \end{equation}
    First note the following identity
    \begin{multline*}
      \Delta_2f(x,h)-\Delta_2f(x,h') = \\
      \frac{h'-h}{h'} [\Delta_2f(x,h) - (\Delta_1f(x+h,h'-h) - \Delta_1f(x-h',h'-h)) ].
    \end{multline*}
    Using \eqref{eq:FirstDividedDifferences} on the last two terms, we get \eqref{eq:VariationEqualCentre}.
    Finally, \eqref{eq:DividedDifferencesVariation} is a direct consequence of \eqref{eq:VariationEqualStep} and \eqref{eq:VariationEqualCentre}. 
  \end{proof}
  
  \section{The Dyadic Results}
  \label{sec:Dyadic}
  A \emph{dyadic rational} is a number of the form $k2^{-n}$ with $k,n \in \integers.$
  For $n \geq 0,$ let $\mathcal{D}_n = \{[k2^{-n},(k+1)2^{-n})\colon k \in \integers\}.$
  For $n < 0,$ consider $m$ such that $n = -2m+1$ or $n = -2m,$ and let $t_n = (4^m-1)/3.$
  In this case, define $\mathcal{D}_n = \{[k2^{-n}-t_n,(k+1)2^{-n}-t_n)\colon k \in \integers\}.$
  A \emph{dyadic interval} $I$ is an interval such that $I \in \mathcal{D}_n$ for some $n \in \integers,$ and in this case we say that $I$ is a dyadic interval \emph{of generation $n$.}
  Denote by $\mathcal{D} = \bigcup_{n \in \integers} \mathcal{D}_n$ the set of all dyadic intervals.
  Note that, given $I \in \mathcal{D}_n$ for $n \in \integers,$ there is a unique interval $I^\ast$ in $\mathcal{D}_{n-1}$ that contains $I,$ which we call the \emph{predecessor of $I.$}
  If $I_0$ is an arbitrary interval, we will use the notation $\mathcal{D}(I_0) = \{I \in \mathcal{D} \colon I \subseteq I_0\}.$
  As explained in the introduction, a continuous real valued function $f$ on $\reals$ belongs to the \emph{dyadic Zygmund class,} denoted $f \in \zygdyc,$ if
  \begin{equation*}
    \norm{f}{\ast d} = \sup_{I \in \mathcal{D}} |\Delta_2f(I)| < \infty.
  \end{equation*}
  In a similar fashion, we say that a locally integrable function $f$ has \emph{bounded dyadic mean oscillation,} $f \in \BMO_d,$ if
  \begin{equation*}
    \norm{f}{\BMO d} = \sup_{I \in \mathcal{D}} \left(\frac{1}{|I|} \int_I |f(x)-f_I|^2\, dx\right)^{1/2} < \infty, 
  \end{equation*}
  and we consider the \emph{dyadic $\IBMO$} space to be the space of continuous real valued functions on $\reals$ whose distributional derivatives
  belong to $\BMO_d,$ that is
  \begin{equation*}
    \IBMO_d = \{f \in \continuous(\reals) \colon f' \in \BMO_d\}.
  \end{equation*}
  It is easy to see that each dyadic space contains its corresponding homogeneous space, that is $\BMO \subseteq \BMO_d$ and $\zyg \subseteq \zygdyc.$
  It is important to remark, as well, that none of these pairs are equal.
  More information on the relation between $\BMO$ and $\BMO_d$ can be found in \cite{ref:GarnettJonesBMOAndDyadicBMO}, \cite{ref:Mei} and \cite{ref:Conde}.
  
  The spaces $\zygdyc$ and $\IBMO_d$ can be regarded as well as spaces of dyadic martingales.
  We say that a sequence of functions $S = \{S_n\}$ is a \emph{dyadic martingale} if for all $n \geq 0$ the following conditions are satisfied:
  \begin{enumerate}[(i)]
    \item
    $S_n$ is constant on any $I \in \mathcal{D}_n,$
    \item
    $S_n|I = \frac{1}{2} \left(S_{n+1}|I^{(1)} + S_{n+1}|I^{(2)}\right)$ for all $I \in \mathcal{D}_n,$
    where $I^{(1)},I^{(2)}$ are the intervals in $\mathcal{D}_{n+1}$ contained in $I.$
  \end{enumerate}
  We will denote the value of $S_n$ at $I \in \mathcal{D}_m,$ $m \geq n,$ by $S_n(I),$ and, if there is no ambiguity, when $I \in \mathcal{D}_n$ we will just write $S(I).$
  For $x \in \reals$ and $n \geq 0,$ let $I \in \mathcal{D}_n$ be such that $x \in I.$
  Then, we have that $S_n(x) = S(I),$ and we will denote $S(x) = \lim_{n\to\infty} S_n(x)$ when this limit exists.
  For $n \geq 1,$ we call \emph{jump of $S$ at generation $n$} the function $\Delta S_n(x) = S_n(x) - S_{n-1}(x),$ and if $I \in \mathcal{D}_n,$ we use the notation $\Delta S_n(I) = S_n(I) - S_{n-1}(I^\ast),$ where $I^\ast$ is the predecessor of $I.$
  One can easily check that for a dyadic martingale $S$ the jumps $\Delta S_j$ and $\Delta S_k$ are orthogonal in $\Lp{2}(I)$ for any $I \in \mathcal{D}_0$ when $j \neq k.$
  
  With these concepts at hand, we can associate to each function $f \in \zygdyc$ a dyadic martingale $S,$ which we shall call \emph{the average growth martingale of $f,$} as follows.
  For a dyadic interval $I = [a,b) \in \mathcal{D}_n,$ set
  \begin{equation}
    \label{eq:AGMartingale}
    S_n(I) = \frac{f(b)-f(a)}{b-a} = 2^{n} (f(b)-f(a)).
  \end{equation}
  Now, observe that the second divided difference of $f$ can be expressed in terms of the jumps of $S;$ that is, for $I\in\mathcal{D}_n,$ we have the relation
  \begin{equation*}
    |\Delta_2f(I^{\ast})| = 2|\Delta S(I)|.
  \end{equation*}
  Now it is obvious that any dyadic martingale $S$ is related to a function $f \in \zygdyc$ (up to a linear term) through the relation \eqref{eq:AGMartingale} if and only if
  \begin{equation*}
    \norm{S}{\ast} = \sup_{I\in\mathcal{D}} |\Delta S(I)| < \infty.
  \end{equation*}
  To get the corresponding description of martingales associated with $\IBMO_d$ functions, we will discretise \eqref{eq:BMONorm}.
  Note that for $f \in \IBMO_d,$ with average growth martingale $S,$ and $I \in \mathcal{D}_N,$ using
  that the jumps $\{\Delta S_n\}_{n \geq N}$ restricted to $I$ of the martingale $S$ are orthogonal in $\Lp{2},$ one can express
  \begin{equation*}
    \int_I |f'(x) - f'_I|^2\, dx = \int_I \sum_{n > N} |\Delta S_n(x)|^2\, dx.
  \end{equation*}
  Thus, a martingale $S$ is related to a function $f \in \IBMO_d$ through the relation \eqref{eq:AGMartingale} if and only if
  \begin{equation}
    \label{eq:BMOMartingale}
    \norm{S}{\BMO} = \sup_{I\in\mathcal{D}} \left( \frac{1}{|I|} \sum_{J\in\mathcal{D}(I)} |\Delta S(J)|^2 |J| \right)^{1/2} < \infty.
  \end{equation}
  The analog of Theorem \ref{thm:IBMODistance} for this setting is the following
    
  \begin{thm}
    \label{thm:clibmod}
    Let $f$ be a compactly supported function in $\zygdyc.$
    For a fixed $\varepsilon > 0,$ define $D(f,\varepsilon)$ by
    \begin{equation}
      \label{eq:clibmodCondition}
      D(f,\varepsilon) = \sup_{I \in \mathcal{D}} \frac{1}{|I|} \sum_{\substack{J \in \mathcal{D}(I) \\ |\Delta_2f(J)| > \varepsilon}} |J|.
    \end{equation}
    Then,
    \begin{equation}
      \label{eq:clibmodDistance}
      \dist(f,\IBMO_d) = \inf \{\varepsilon > 0 \colon D(f,\varepsilon) < \infty\}.
    \end{equation}
  \end{thm}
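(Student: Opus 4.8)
The plan is to translate everything into the language of dyadic martingales and then perform a single‑pass surgery on the jumps. Let $S$ be the average growth martingale of $f$ as in \eqref{eq:AGMartingale}, so that $|\Delta S(I)| = \tfrac12|\Delta_2 f(I^\ast)| \le \tfrac12\norm{f}{\ast d}$ for every $I \in \mathcal{D}$. By \eqref{eq:BMOMartingale}, a function $g$ lies in $\IBMO_d$ (up to an irrelevant linear term) exactly when its average growth martingale $T$ satisfies $\norm{T}{\BMO} < \infty$, and since $f \mapsto S$ is linear one has $\norm{f-g}{\ast d} = 2\sup_{I \in \mathcal{D}}|\Delta S(I) - \Delta T(I)|$. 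Fix $\varepsilon > 0$ and set $\mathcal{B}_\varepsilon = \{I \in \mathcal{D} \colon |\Delta S(I)| > \varepsilon/2\}$. Two elementary observations drive the argument: first, on a sibling pair one has $\Delta S(I^{(1)}) = -\Delta S(I^{(2)})$, so $\mathcal{B}_\varepsilon$ is a union of sibling pairs and $\mathcal{B}_\varepsilon$ is precisely the set of children of dyadic intervals $J$ with $|\Delta_2 f(J)| > \varepsilon$; second, counting those children against \eqref{eq:clibmodCondition} gives the packing estimate $\sum\{|I| \colon I \in \mathcal{B}_\varepsilon,\ I \subseteq J\} \lesssim (1 + D(f,\varepsilon))|J|$ for every $J \in \mathcal{D}$, so $D(f,\varepsilon) < \infty$ is essentially the statement that $\mathcal{B}_\varepsilon$ obeys a Carleson packing condition.

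For the lower bound in \eqref{eq:clibmodDistance}, take $g \in \IBMO_d$ with average growth martingale $T$ and fix $\varepsilon' > \norm{f-g}{\ast d}$. If $|\Delta_2 f(J)| > \varepsilon'$ then $|\Delta_2 g(J)| > \delta := \varepsilon' - \norm{f-g}{\ast d} > 0$, so by Chebyshev and \eqref{eq:BMOMartingale},
\begin{equation*}
  \frac{1}{|I|} \sum_{\substack{J \in \mathcal{D}(I)\\ |\Delta_2 f(J)| > \varepsilon'}} |J| \le \frac{1}{\delta^2}\,\frac{1}{|I|} \sum_{J \in \mathcal{D}(I)} |\Delta_2 g(J)|^2 |J| \lesssim \frac{\norm{T}{\BMO}^2}{\delta^2}
\end{equation*}
uniformly in $I \in \mathcal{D}$. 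Hence $D(f,\varepsilon') < \infty$; letting $\varepsilon' \downarrow \norm{f-g}{\ast d}$ and then taking the infimum over $g \in \IBMO_d$ yields $\inf\{\varepsilon > 0 \colon D(f,\varepsilon) < \infty\} \le \dist(f,\IBMO_d)$.

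For the upper bound, fix any $\varepsilon$ with $D(f,\varepsilon) < \infty$ and define a martingale $T$ by keeping the large jumps of $S$ and deleting the small ones: $\Delta T(I) = \Delta S(I)$ for $I \in \mathcal{B}_\varepsilon$ and $\Delta T(I) = 0$ otherwise, with $T_0 := S_0$. Since $\mathcal{B}_\varepsilon$ is a union of sibling pairs, this is a bona fide dyadic martingale. By construction $\sup_{I}|\Delta S(I) - \Delta T(I)| = \sup_{I \notin \mathcal{B}_\varepsilon}|\Delta S(I)| \le \varepsilon/2$, so $\norm{T}{\ast d} \le \norm{S}{\ast d} < \infty$, the function $g$ whose average growth martingale is $T$ lies in $\zygdyc$, and $\norm{f-g}{\ast d} \le \varepsilon$. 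Moreover, using $|\Delta S(I)| \le \tfrac12\norm{f}{\ast d}$ on $\mathcal{B}_\varepsilon$ together with the packing estimate,
\begin{equation*}
  \frac{1}{|I|} \sum_{J \in \mathcal{D}(I)} |\Delta T(J)|^2 |J| = \frac{1}{|I|} \sum_{J \in \mathcal{D}(I) \cap \mathcal{B}_\varepsilon} |\Delta S(J)|^2 |J| \lesssim \norm{f}{\ast d}^2\,\bigl(1 + D(f,\varepsilon)\bigr),
\end{equation*}
so $\norm{T}{\BMO} < \infty$ and $g \in \IBMO_d$. Thus $\dist(f,\IBMO_d) \le \norm{f-g}{\ast d} \le \varepsilon$ for every admissible $\varepsilon$, which together with the previous paragraph gives the equality in \eqref{eq:clibmodDistance}.

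I do not expect a genuine obstacle inside this dyadic statement: no stopping‑time construction is needed, because the hypothesis $D(f,\varepsilon) < \infty$ is already exactly the packing condition that forces the ``large‑jump'' martingale $T$ into $\BMO$, so the correction can be made in a single pass. The points that require care are purely bookkeeping: the precise dictionary between second divided differences on dyadic intervals and martingale jumps (the factor $2$ and the sibling antisymmetry), the elementary counting that converts \eqref{eq:clibmodCondition} into the Carleson packing estimate, and the appeal to the martingale/function correspondences already established in this section in order to produce an honest function $g \in \IBMO_d$ while keeping the harmless linear‑term ambiguity out of the way. The substantive work of the paper lies elsewhere, in transporting this clean dyadic picture to the continuous Zygmund class through the averaging scheme of Theorem \ref{thm:zygAverages}.
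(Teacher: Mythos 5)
Your proposal matches the paper's own proof: the same translation into the average growth martingale $S$, the same single-pass surgery on the jumps (retain jumps with $|\Delta S(J)| > \varepsilon/2$, delete the rest) to build the approximating martingale, the same Carleson-type estimate using $D(f,\varepsilon)$ and the uniform jump bound to verify $\eqref{eq:BMOMartingale}$, and the same Chebyshev-style argument for the lower bound. The only differences are cosmetic bookkeeping (naming the bad set $\mathcal{B}_\varepsilon$ and spelling out the sibling antisymmetry $\Delta S(I^{(1)}) = -\Delta S(I^{(2)})$, which the paper leaves implicit via the identity $|\Delta_2 f(I^\ast)| = 2|\Delta S(I)|$ and the rewriting $\eqref{eq:clibmodMartingale}$).
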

  Note that we can rewrite this result in terms of martingales.
  Let $f\in\zygdyc$ be compactly supported on a dyadic interval $I_0,$ and consider its average growth martingale $S$ defined by \eqref{eq:AGMartingale}.
  In this way, $D(f,\varepsilon)$ in \eqref{eq:clibmodCondition} can be expressed as
  \begin{equation}
    \label{eq:clibmodMartingale}
    D(f,\varepsilon) = \sup_{I \in \mathcal{D}(I_0)} \frac{1}{|I|} \sum_{\substack{J \in \mathcal{D}(I) \\
    |\Delta S(J)| > \varepsilon/2}} |J|.
  \end{equation}
  
  \begin{proof}[Proof of Theorem \ref{thm:clibmod}]
    Without loss of generality, let us assume that $f$ is supported on the dyadic interval $I_0 = [0,1].$
    We need to prove that, for a given $\varepsilon > 0,$ there exists a function $b\in\IBMO_d$ satisfying
    $\norm{f-b}{\ast d} \leq \varepsilon$ if and only if $D(f,\varepsilon) < \infty.$
    Denote by $\varepsilon_0$ the infimum in the left-hand side of \eqref{eq:clibmodDistance}.
    
    Given $\varepsilon > \varepsilon_0,$ we will construct a function $b\in\IBMO_d$ such that $\norm{f-b}{\ast d} \leq \varepsilon.$
    Consider the average growth martingale $S$ for function $f,$ defined by \eqref{eq:AGMartingale}.
    First, we approximate the martingale $S$ by a martingale $B$ related to an $\IBMO_d$ function,
    that is satisfying \eqref{eq:BMOMartingale}.
    Take $B(I_0) = S(I_0)$ and construct $B$ by setting $\Delta B(J) = \Delta S(J)$ whenever $|\Delta S(J)| > \varepsilon/2$ and $\Delta B(J)=0$ otherwise, for $J\in\mathcal{D}(I_0).$
    
    By construction, it is clear that $\norm{S-B}{\ast} \leq \varepsilon/2.$
    Moreover, for any $I \in \mathcal{D},$ we have
    \begin{equation*}
      \sum_{J\in\mathcal{D}(I)} |\Delta B(J)|^2 |J| \leq \norm{S}{\ast}^2
      \sum_{\substack{J\in\mathcal{D}(I) \\ |\Delta S(J)|>\varepsilon/2}} |J| \leq |I| \norm{S}{\ast}^2 D(f,\varepsilon),
    \end{equation*}
    showing that $B$ satisfies \eqref{eq:BMOMartingale}.
    
    Now, using that the jumps $\Delta B_j$ and $\Delta B_k$ are orthogonal in $\Lp{2},$ we have
    \begin{equation*}
      \int_{I_0} \left( \sum_{n=1}^{\infty} \Delta B_n(x) \right)^2\, dx = \int_{I_0} \sum_{n=1}^{\infty} |\Delta B_n(x)|^2\, dx = 
      \sum_{J\in\mathcal{D}(I_0)} |\Delta B(J)|^2 |J| < \infty.
    \end{equation*}
    This gives that $\lim_{n\to\infty} B_n(x)$ exists at almost every point $x \in I_0$ and it is actually a square integrable function, so that we can integrate it to get $b(x) = \int_0^x \lim_n B_n(s)\, ds \in \IBMO_d$ such that $\norm{f-b}{\ast d} \leq \varepsilon.$
    
    Finally, if $\varepsilon < \varepsilon_0,$ we show that no function $b\in\IBMO_d$ satisfies $\norm{f-b}{\ast d} \leq \varepsilon.$
    Take $\varepsilon_0 > \varepsilon_1 > \varepsilon,$ assume that there is $b\in\IBMO_d$ satisfying
    $\norm{f-b}{\ast d} \leq \varepsilon,$ and let $S$ and $B$ be the respective average growth martingales for $f$ and $b.$
    For any $I\in\mathcal{D}$ such that $|\Delta S(I)| > \varepsilon_1,$ we have that $|\Delta B(I)| >
    \varepsilon_1 - \varepsilon = \delta > 0.$
    Thus
    \begin{equation*}
      \frac{1}{|I|} \sum_{J\in\mathcal{D}(I)} |\Delta B(J)|^2 |J| > 
      \frac{\delta^2}{|I|} \sum_{\substack{J\in\mathcal{D}(I) \\ |\Delta S(J)|>\varepsilon_1}} |J|.
    \end{equation*}
    The supremum of this quantity when $I$ ranges over all dyadic intervals is $\delta^2 D(f,\varepsilon_1) = +\infty.$
    This contradicts condition \eqref{eq:BMOMartingale} for martingale $B$ and, hence, that $b$ is an $\IBMO_d$ function, concluding the proof of the theorem.
  \end{proof}
  
  \section{From the Dyadic to the Continuous Setting}
  \label{sec:Continuous}
  Before proving Theorem \ref{thm:zygAverages}, let us make some observations.
  Consider the measurable mapping $\alpha\mapsto t^{(\alpha)}$ from $\reals$ to $\zygdyc$ such that all $t^{(\alpha)}$ are supported on $I_0 = [0,1]$ and such that $\norm{t^{(\alpha)}}{\ast d} \leq 1,$ and let $R \geq 1.$
  We will denote by $\mathcal{D}^0 = \mathcal{D}$ the standard dyadic filtration and by $\mathcal{D}^{\beta}$ the translated filtration by $-\beta$ units.
  We also extend this notation to denote by $\mathcal{D}^{0}_n$ the set of intervals of size $2^{-n}$ in $\mathcal{D}^0$ and by $\mathcal{D}^{\beta}_n$ the set of intervals of the same size in $\mathcal{D}^{\beta}.$
  Similarly, we denote by $\zyg^0$ the dyadic Zygmund class with respect to the filtration $\mathcal{D}^0$ and $\zyg^{\beta}$ the Zygmund dyadic class with respect to $\mathcal{D}^{\beta}.$
  With this notation, if $f(x)\in\zyg^0,$ then $f(x+\beta)\in\zyg^\beta.$
  
  Now consider an arbitrary interval $I$ and the adjacent interval $\tilde{I}=I-|I|$ of the same size. 
  Fix $R \geq 1$ and $\alpha\in[-R,R]$ and let $n$ be the minimum integer such that $I$ contains an interval of $\mathcal{D}^{\alpha}_n,$ and let $\mathcal{F}_n(I)$ be the set of all such intervals.
  For each $m>n,$ let $\mathcal{F}_m(I)$ be the set of intervals $J \in \mathcal{D}^{\alpha}_m$ such that $J \subset I \setminus \cup_{j=n}^{m-1} \mathcal{F}_j(I).$
  Then, $\mathcal{F}(I) = \cup_{j\geq n} \mathcal{F}_j(I)$ is a covering of $I$ by intervals of $\mathcal{D}^{\alpha}.$
  The covering $\mathcal{F}(\tilde{I})$ of $\tilde{I}$ is constructed in the exact same way.

  Let us say that $\mathcal{F}(I) = \{I_j\}_{j=1}^\infty.$
  We may assume that the intervals $I_j$ are ordered in the following way.
  Whenever $j>k,$ $|I_j| \leq |I_k|,$ and we may take $I_k$ to be to the left of $I_j$ if $|I_j|=|I_k|.$
  That is, we order the intervals decreasing in size and left to right for those that have the same length.
  We consider the covering $\mathcal{F}(\tilde{I}) = \{\tilde{I}_j\}_{j=1}^\infty$ to be ordered in the same way.

  \begin{lemma}
    \label{lemma:maximality}
    Let $I \subseteq \reals$ be a finite interval and $\mathcal{F}(I)$ its covering by intervals of $\mathcal{D}^\alpha$ constructed and ordered as previously explained.
    Then, the intervals of $\mathcal{F}(I)$ have disjoint interiors and, for $j \geq 1,$ they satisfy that $|I_{j+2}| \leq |I_j|/2.$
  \end{lemma}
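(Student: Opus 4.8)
The plan is to derive both assertions from a generation-by-generation description of the covering $\mathcal{F}(I)=\{I_j\}$, the decisive point being that no generation contributes more than two intervals. The disjointness is built into the construction: the intervals of $\mathcal{F}_m(I)$ are distinct members of $\mathcal{D}^\alpha_m$, hence pairwise disjoint, and each of them lies in $I\setminus\bigcup_{j=n}^{m-1}\mathcal{F}_j(I)$ by definition, so it is disjoint from every interval chosen at an earlier generation; thus all intervals of $\mathcal{F}(I)$ are pairwise disjoint, and in particular have disjoint interiors.

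The heart of the matter is to show $|\mathcal{F}_m(I)|\le 2$ for every $m\ge n$. For $m=n$, the family $\mathcal{F}_n(I)$ is the set of all intervals of $\mathcal{D}^\alpha_n$ contained in $I$, which is a run of consecutive dyadic intervals; were there three or more, the interval of $\mathcal{D}^\alpha_{n-1}$ containing a non-extreme one among them would be the union of that interval with one of its two neighbours, both lying in $I$, hence itself contained in $I$, contradicting the minimality of $n$. For $m>n$, I would prove by induction on $m$ that after the intervals of generations $n,\dots,m$ have been removed, the uncovered set $I\setminus\bigcup_{j=n}^{m}\mathcal{F}_j(I)$ is a union of at most two intervals, one abutting each end of the covered block, each of length strictly less than $2^{-m}$. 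The base case $m=n$ holds because the left-hand leftover interval has the form $[a,c)$, where $c$ is the left endpoint of the leftmost interval of $\mathcal{F}_n(I)$, and $a>c-2^{-n}$ since otherwise the interval of $\mathcal{D}^\alpha_n$ with right endpoint $c$ would lie in $I$ and belong to $\mathcal{F}_n(I)$; the right-hand leftover is symmetric. For the inductive step, a leftover interval of length $<2^{-m}=2\cdot 2^{-m-1}$ contains at most one interval of $\mathcal{D}^\alpha_{m+1}$, which necessarily abuts the covered block, and removing it---or removing nothing---leaves a leftover of length $<2^{-m-1}$; summing over the at most two leftover intervals of generation $m$ gives $|\mathcal{F}_{m+1}(I)|\le 2$.

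Granting this, the size estimate is a short counting argument: an interval of $\mathcal{F}(I)$ of generation $m$ has length $2^{-m}$, and each generation occurs at most twice, so in the chosen ordering (decreasing length, then left to right) the intervals of any fixed generation occupy at most two consecutive positions; hence if $I_j$ has generation $m$, then $I_{j+2}$ has generation at least $m+1$, so $|I_{j+2}|\le 2^{-m-1}=|I_j|/2$. I expect the only delicate point to be the induction that simultaneously keeps the leftover intervals short and few in number; once that is established, the rest is routine bookkeeping.
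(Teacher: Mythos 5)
Your proof is correct and reaches the key intermediate claim—that $\mathcal{F}(I)$ contains at most two intervals of each generation—from which the bound $|I_{j+2}|\leq |I_j|/2$ follows exactly as you describe. The paper arrives at this same claim by a shorter route: it observes that every $J\in\mathcal{F}(I)$ is \emph{maximal} (its dyadic parent is not contained in $I$) and treats the two-per-generation bound as an immediate consequence. Your inductive argument, tracking the uncovered leftover pieces and showing they shrink by a factor of $2$ at each generation, is a more explicit way of establishing the same fact; it also implicitly re-derives maximality. The one point you gloss over slightly is the claim that the candidate interval of $\mathcal{D}^\alpha_{m+1}$ in a leftover component necessarily abuts the covered block. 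This does hold, because the boundary of the covered block is the endpoint of a dyadic interval of some generation $\leq m$, hence is on the $2^{-(m+1)}$-grid of $\mathcal{D}^\alpha$, so a short leftover interval $[a,c)$ with $c$ on that grid and $c-a<2^{-m}$ can contain at most $[c-2^{-(m+1)},c)$; but your write-up should make this grid-alignment explicit, since without it "necessarily abuts" is not justified.
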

  \begin{proof}
    The intervals in $\mathcal{F}(I)$ have disjoint interiors by construction.
    Moreover, these intervals are maximal in the sense that if $J \in \mathcal{F}(I)$ and $J \subsetneq J' \in \mathcal{D}^{\alpha},$ then $J' \not\subset I.$
    Thus, it is clear that for each $n \geq 1$ there are at most two intervals in $\mathcal{F}(I)$ of size $2^{-n}|I|.$
    This yields that, for $j \geq 1,$ the intervals in $\mathcal{F}(I)$ satisfy $|I_{j+2}| \leq |I_j|/2.$
  \end{proof}
  
  When $|I| = 2^{-n}$ for some $n \in \integers,$ the covering $\mathcal{F}(\tilde{I}) = \{\tilde{I}_j\}_{j=1}^\infty$ is a translation of $\mathcal{F}(I) = \{I_j\}_{j=1}^\infty.$
  More precisely, if we order both $\{\tilde{I}_j\}$ and $\{I_j\}$ as previously explained, then for each $j \geq 1$ we have that $\tilde{I}_j = I_j - |I|$ and, trivially, for every $j \geq 1,$ $|\tilde{I}_j| = |I_j|.$
  However, for an arbitrary interval $I,$ the sizes of the intervals in $\mathcal{F}(I)$ and $\mathcal{F}(\tilde{I})$ may be completely different.
  For instance, it could happen that for a given $j \in \integers,$ $\mathcal{F}(I)$ had two intervals of
  size $2^{-j}$ while $\mathcal{F}(\tilde{I})$ had only one.

  \begin{lemma}
    \label{lemma:EqualLengths}
    Let $I$ and $\tilde{I}$ be two adjacent intervals of the same length.
    Fix $\alpha \in \reals.$
    Then there are coverings $\mathcal{G}(I) = \{J_j\}$ and $\mathcal{G}(\tilde{I}) = \{\tilde{J}_j\},$ of $I$ and $\tilde{I}$ respectively, both consisting of intervals of $\mathcal{D}^\alpha,$ with $|J_j| = |\tilde{J}_j|$ for any $j,$ and with $|J_{j+2}| \leq |J_j|/2.$
  \end{lemma}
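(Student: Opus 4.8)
The plan is to start from the two natural coverings $\mathcal{F}(I) = \{I_j\}$ and $\mathcal{F}(\tilde I) = \{\tilde I_j\}$ furnished by the construction before Lemma \ref{lemma:maximality}, and then to refine whichever of the two coverings has, at a given scale, the larger intervals, so that at every dyadic scale $2^{-k}$ the number of covering intervals of that size agrees on both sides. Concretely, for each integer $k$ let $a_k$ (resp.\ $\tilde a_k$) be the number of intervals of length $2^{-k}$ in $\mathcal{F}(I)$ (resp.\ $\mathcal{F}(\tilde I)$); by Lemma \ref{lemma:maximality} we have $a_k, \tilde a_k \in \{0,1,2\}$, and $\sum_k a_k 2^{-k} = |I| = |\tilde I| = \sum_k \tilde a_k 2^{-k}$. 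The idea is to process scales from coarsest to finest: whenever $a_k > \tilde a_k$ for the current coarsest scale of disagreement, replace one interval $I_j \in \mathcal{F}(I)$ of length $2^{-k}$ by its two $\mathcal{D}^\alpha$-children of length $2^{-k-1}$ (and symmetrically on the $\tilde I$ side if the inequality goes the other way). Each such split is still a covering of the same set by intervals of $\mathcal{D}^\alpha$, it strictly decreases the discrepancy at scale $k$, and it only adds intervals at finer scales, so the process is well-defined scale by scale.

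The key point to verify is that this refinement terminates and produces two coverings with matching length-profiles. Here I would use the identity $\sum_k a_k 2^{-k} = \sum_k \tilde a_k 2^{-k}$ together with the bound $a_k,\tilde a_k \le 2$: a standard "carrying" argument shows that after equalising the profile down to scale $k$, the total remaining length below scale $k$ is the same on both sides, so the procedure never gets stuck and the profiles $\{a_k\}$, $\{\tilde a_k\}$ can be made to coincide after finitely many splits at each scale (in fact the final common profile is essentially the binary expansion of $|I|$ relative to the base scale). Once the profiles agree, one has common counts $n_k := a_k = \tilde a_k$ at every scale; enumerate the resulting intervals of $\mathcal{G}(I)$ and $\mathcal{G}(\tilde I)$ by decreasing size, breaking ties left-to-right exactly as before, and call them $\{J_j\}$ and $\{\tilde J_j\}$. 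Then $|J_j| = |\tilde J_j|$ for every $j$ by construction.

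It remains to check the quasi-geometric decay $|J_{j+2}| \le |J_j|/2$. This follows because after equalisation each scale still carries at most two intervals: indeed the only way a scale could acquire three or more intervals is through repeated splitting, but we only ever split to remove an existing surplus, never creating a surplus, so $n_k \le 2$ is preserved (one may instead simply observe that the equalised common profile is the one with $n_k \le 1$ coming from the binary digits of $|I| 2^{N}$ for the appropriate base generation $N$, which trivially satisfies $n_k \le 2$). With at most two intervals per scale and the ordering by decreasing length, $J_j$ and $J_{j+2}$ cannot have the same length, hence $|J_{j+2}| \le |J_j|/2$, and likewise on the $\tilde I$ side. The main obstacle is the bookkeeping in the equalisation step — making precise that splitting from coarse to fine scales always succeeds and leaves the per-scale count bounded — but this is exactly a base-$2$ carrying argument and presents no real difficulty once the length identity $|I| = |\tilde I|$ is exploited.
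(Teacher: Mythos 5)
Your approach is genuinely different from the paper's. The paper matches the two coverings \emph{interval by interval}: at each step it takes the currently largest unprocessed interval on each side, and if the two differ in size it subdivides the larger one (as a dyadic interval) into pieces of the prescribed lengths coming from the shorter side's prefix, using that the partial sums are dyadic rationals to guarantee the prefix sums land exactly. You instead work \emph{scale by scale}, equalising the per-scale counts by a carrying argument. Both routes are viable, but yours has a weak point precisely at the step you flag as the crux, and your justification there is not correct as stated.

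The claim ``we only ever split to remove an existing surplus, never creating a surplus, so $n_k\le 2$ is preserved'' is literally false: splitting an interval at scale $k$ adds two intervals at scale $k+1$, which \emph{does} push the count at scale $k+1$ above the original $\tilde a_{k+1}\le 2$ whenever the carry is nonzero. The conclusion $n_k\le 2$ is nevertheless true, but for a reason you have not articulated: the carry is always on exactly one side at a time, and once the carry $c_k\ge 1$ sits on, say, the $I$-side, one has $a_k^{\text{cur}}=a_k^{\text{orig}}+2c_k\ge 2\ge \tilde a_k^{\text{orig}}=\tilde a_k^{\text{cur}}$, so the recorded value $n_k=\min(a_k^{\text{cur}},\tilde a_k^{\text{cur}})$ equals the \emph{original} count on the non-carrying side, which is $\le 2$. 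That one-sidedness of the carry, and the observation that the minimum is always taken on the non-carrying side, is the missing argument; without it the inequality $n_k\le 2$ is simply asserted. Your fallback observation is also wrong: the equalised common profile is in general \emph{not} the binary expansion of $|I|\,2^{N}$. The binary expansion is the \emph{coarsest} representation of $|I|$ as a sum of distinct dyadic fractions, and it cannot be reached from an arbitrary admissible profile by splitting alone (splitting only refines; e.g.\ from the profile with $a_{N+1}=2$ you cannot reach $a_N=1$). So that sentence should be deleted, and the per-scale bound should be argued via the carry as above. With that fix your proof goes through and is a clean alternative to the paper's prefix-matching construction.
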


  \begin{proof}
    Consider the previous coverings $\mathcal{F}(I) = \{I_j\}$ and $\mathcal{F}(\tilde{I}) = \{\tilde{I}_j\}.$
    If $|I_1| = |\tilde{I}_1|,$ then take $J_1 = I_1$ and $\tilde{J}_1 = \tilde{I}_1.$
    If these sizes are different, assume $|I_1| > |\tilde{I}_1|$ (otherwise the procedure is the same), there exists an integer $k \geq 2$ such that $\sum_{j=1}^k |\tilde{I}_j| = |I_1|.$
    Note that $k$ exists because all $|I_j|$ (and also $|\tilde{I_j}|)$ are dyadic rationals that add up to $|I| = |\tilde{I}|.$
    Then take $\tilde{J}_j = \tilde{I}_j$ for $1 \leq j \leq k,$ and choose pairwise disjoint intervals $J_1,\ldots,J_k \in \mathcal{D}(I_1)$ such that $I_1 = \cup_{l=1}^k J_l$ and that, for each $1 \leq j \leq k,$ $|J_j| = |\tilde{J}_j|.$
    Note that, for $1 \leq j \leq k-2,$ we have that $|J_{j+2}| = |\tilde{J}_{j+2}| \leq |\tilde{J}_j|$ because of Lemma \ref{lemma:maximality}.
    Recursively, consider that we have fixed $\{J_j\}_{j=1}^p \in \mathcal{G}(I)$ and $\{\tilde{J}_j\}_{j=1}^p \in \mathcal{G}(\tilde{I}),$ let $m,n$ be the smallest integers such that $I_m \subseteq I \setminus \cup_{j=1}^p J_j$ and $\tilde{I}_n \subseteq \tilde{I} \setminus \cup_{j=1}^p \tilde{J}_j,$ and repeat the previous step with $I_m$ and $\tilde{I}_n.$
  \end{proof}
  
  Given two finite intervals $I_1,I_2,$ we say that their \emph{minimal common predecessor in $\mathcal{D}^{\alpha},$} denoted $P_\alpha(I_1,I_2),$ is the interval $P_\alpha(I_1,I_2)\in\mathcal{D}^{\alpha}$ such that $I_1\cup I_2 \subseteq P_\alpha(I_1,I_2)$ and such that for every $J \in \mathcal{D}^{\alpha}$ that satisfies $I_1\cup I_2 \subseteq J,$ then $P_\alpha(I_1,I_2) \subseteq J.$
  If $I_1,I_2 \in \mathcal{D}^{\alpha},$ we define their \emph{distance in the dyadic filtration $\mathcal{D}^{\alpha},$} denoted by $\dist_\alpha (I_1,I_2),$ as
  \begin{equation*}
    \dist_\alpha (I_1,I_2) = \log_2\frac{|P_\alpha(I_1,I_2)|}{|I_1|} + \log_2\frac{|P_\alpha(I_1,I_2)|}{|I_2|}.
  \end{equation*}
  Here it is necessary to specify the index $\alpha$ as one could have two intervals $I_1,I_2$ that were dyadic in two different filtrations $\mathcal{D}^\alpha$ and $\mathcal{D}^\beta$ such that the difference between both distances is as large as desired.
  
  \begin{lemma}
    \label{lemma:DividedDifferencesDyadicDistance}
    Consider $f \in \zygdyc^\alpha$ and $I,J \in \mathcal{D}^\alpha.$
    Then,
    \begin{equation*}
      |\Delta_1f(I)-\Delta_1f(J)| \leq \norm{f}{\ast\, d} \dist_\alpha(I,J).
    \end{equation*}
  \end{lemma}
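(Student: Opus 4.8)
The plan is to exploit the telescoping structure of the dyadic filtration. Let $P = P_\alpha(I,J)$ be the minimal common predecessor. Since $I \subseteq P$ and both are in $\mathcal{D}^\alpha$, there is a finite chain of dyadic intervals $I = I_{(0)} \subsetneq I_{(1)} \subsetneq \cdots \subsetneq I_{(k)} = P$, each the predecessor of the previous, where $k = \log_2(|P|/|I|)$; similarly a chain $J = J_{(0)} \subsetneq \cdots \subsetneq J_{(\ell)} = P$ with $\ell = \log_2(|P|/|J|)$. By the triangle inequality it suffices to control $|\Delta_1 f(K) - \Delta_1 f(K^\ast)|$ for any dyadic interval $K$ with predecessor $K^\ast$, and then sum: $\dist_\alpha(I,J) = k + \ell$ is exactly the number of such one-step comparisons.

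So the key step is the one-generation estimate: if $K \in \mathcal{D}^\alpha_n$ and $K^\ast \in \mathcal{D}^\alpha_{n-1}$ is its predecessor, then $|\Delta_1 f(K) - \Delta_1 f(K^\ast)| \leq \norm{f}{\ast d}$. To see this, write $K^\ast = [a,c)$ with midpoint $b$, so that either $K = [a,b)$ or $K = [b,c)$, and $|K^\ast| = c - a = 2(b-a)$. In the first case, $\Delta_1 f(K^\ast) = \tfrac{f(c)-f(a)}{c-a}$ and $\Delta_1 f(K) = \tfrac{f(b)-f(a)}{b-a}$, and a direct computation gives
\begin{equation*}
  \Delta_1 f(K) - \Delta_1 f(K^\ast) = \frac{f(b)-f(a)}{b-a} - \frac{f(c)-f(a)}{c-a} = \frac{1}{2}\cdot\frac{f(a) - 2f(b) + f(c)}{b-a} = \frac{1}{2}\Delta_2 f(K^\ast),
\end{equation*}
whose absolute value is at most $\tfrac12 \norm{f}{\ast d}$; the case $K = [b,c)$ is symmetric and yields $\Delta_1 f(K) - \Delta_1 f(K^\ast) = -\tfrac12\Delta_2 f(K^\ast)$. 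In either case the one-step bound is $\tfrac12\norm{f}{\ast d} \leq \norm{f}{\ast d}$.

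Combining, $|\Delta_1 f(I) - \Delta_1 f(P)| \leq \sum_{i=1}^{k} |\Delta_1 f(I_{(i-1)}) - \Delta_1 f(I_{(i)})| \leq k\norm{f}{\ast d}$, and likewise $|\Delta_1 f(J) - \Delta_1 f(P)| \leq \ell\norm{f}{\ast d}$; adding gives $|\Delta_1 f(I) - \Delta_1 f(J)| \leq (k+\ell)\norm{f}{\ast d} = \dist_\alpha(I,J)\norm{f}{\ast d}$. There is no serious obstacle here — the only point requiring a little care is verifying the algebraic identity relating the one-step difference of first divided differences to the second divided difference on the parent, and noting that the two parent/child configurations are handled symmetrically; everything else is a clean telescoping argument over the chains joining $I$ and $J$ to their minimal common predecessor.
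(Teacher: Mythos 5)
Your proof takes exactly the same route as the paper's: telescope along the two chains of predecessors from $I$ and $J$ up to their minimal common predecessor $P_\alpha(I,J)$, and bound each one-step difference $|\Delta_1 f(K) - \Delta_1 f(K^\ast)|$ by $\norm{f}{\ast d}$, noting that the number of steps is exactly $\dist_\alpha(I,J)$. The paper asserts the one-step bound without computation; you supply the explicit identity $\Delta_1 f(K) - \Delta_1 f(K^\ast) = \pm\tfrac12\Delta_2 f(K^\ast)$, which actually gives the slightly sharper constant $\tfrac12$ per step. (Minor nit: with $K=[a,b)$ the left half of $K^\ast=[a,c)$, the sign works out to $-\tfrac12\Delta_2 f(K^\ast)$ rather than $+\tfrac12$, but this is immaterial once you take absolute values.)
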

  \begin{proof}
    Consider the sequences $\{I_j\}_{j=0}^k$ and $\{J_j\}_{j=0}^l$ in $\mathcal{D}^\alpha$ such that $I_0 = I,$ $J_0 = J,$ $I_k = J_l = P_\alpha(I,J),$ and such that $I_j^\ast = I_{j+1}$ for $0 \leq j < k,$ and such that $J_j^\ast = J_{j+1}$ for $0 \leq j < l.$
    One has that
    \begin{equation*}
      |\Delta_1f(I)-\Delta_1f(J)| \leq \sum_{j=0}^{k-1} |\Delta_1f(I_j)-\Delta_1f(I_{j+1})| + \sum_{j=0}^{l-1} |\Delta_1f(J_j)-\Delta_1f(J_{j+1})|.
    \end{equation*}
    Each term of these sums is bounded by $\norm{f}{\ast}$ and, since there are exactly $\dist_\alpha(I,J)$ terms, the result follows.
  \end{proof}

  For future convenience, given a finite interval $I,$ we will denote its midpoint by $c(I).$

  \begin{lemma}
    \label{lemma:CommonPredecessorSize}
    Fix $R \geq 1$ and let $I$ and $\tilde{I}$ be two adjacent intervals of the same length.
    Let $N$ be the integer such that $2^{-N-1} < |I| \leq 2^{-N}$ and let $M$ be the integer such that $2^{M-1} < R \leq 2^M.$ Then, for each $k \geq 1,$ one has that
    \begin{equation*}
      |\{\alpha \in [-R,R]\colon |P_\alpha(I,\tilde{I})| = 2^{k-N}\}| \leq 2^{M+1}2^{-k+2}.
    \end{equation*}
  \end{lemma}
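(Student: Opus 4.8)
The plan is to count the values of $\alpha \in [-R,R]$ for which the minimal common predecessor $P_\alpha(I,\tilde I)$ has length exactly $2^{k-N}$ by examining how the position of the relevant dyadic grid point depends on $\alpha$. First I would note that $I \cup \tilde I$ is an interval of length $2|I|$ with $2^{-N} < 2|I| \le 2^{-N+1}$, and that an interval $J \in \mathcal{D}^\alpha$ of length $2^{k-N}$ contains $I \cup \tilde I$ if and only if its two endpoints, which are of the form $m2^{k-N} - \alpha$ for $m \in \integers$, straddle $I \cup \tilde I$; equivalently, the unique endpoint of the $\mathcal{D}^\alpha_{N-k}$ grid that lies in the closure of $I \cup \tilde I$ together with the position of that grid inside $I\cup\tilde I$ determines whether the generation-$(N-k)$ interval splits $I \cup \tilde I$ or contains it. The event $|P_\alpha(I,\tilde I)| = 2^{k-N}$ means: the generation-$(N-k+1)$ interval (length $2^{k-1-N}$) of $\mathcal{D}^\alpha$ containing $c(\tilde I)$ (say) does \emph{not} contain all of $I \cup \tilde I$, but the generation-$(N-k)$ interval does. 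Since lengths roughly double at each generation and $|I\cup\tilde I|$ is comparable to $2^{-N}$, for $k=1$ and $k=2$ this can happen for a set of $\alpha$ of full measure $2R \le 2^{M+1}$, which is covered by the bound $2^{M+1}2^{-k+2}$ (it gives $2^{M+2}$ for $k=1$, $2^{M+1}$ for $k=2$); for $k \ge 3$ the constraint becomes genuinely restrictive.

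The key computation: fix a reference grid point, say the left endpoint $a$ of $I$. As $\alpha$ varies over $\reals$, the position of $I \cup \tilde I$ relative to the grid $\mathcal{D}^\alpha_{N-k}$ is governed by $(\text{something} + \alpha) \bmod 2^{k-N}$, which sweeps through each residue class. The bad event ``$I \cup \tilde I$ is not contained in its generation-$(N-k+1)$ interval'' requires $I \cup \tilde I$ to contain a grid point of $\mathcal{D}^\alpha_{N-k+1}$; since $|I \cup \tilde I| = 2|I| \le 2^{-N+1}$ and the spacing of that grid is $2^{k-1-N}$, and assuming $k \ge 2$ so that $2^{k-1-N} \ge 2^{-N+1} \ge |I\cup\tilde I|$... actually for $k \ge 2$ the interval can contain at most one such point, and the measure of $\alpha$ (per period $2^{k-N}$ of the coarser grid) for which it does is at most $|I \cup \tilde I| \le 2^{-N+1}$, and there are at most $2R/2^{k-N} + 1 \le 2^{M+1}/2^{k-N} + 1$ such periods in $[-R,R]$. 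This multiplies out to at most $(2^{M+1-k+N} + 1) \cdot 2^{-N+1} = 2^{M+2-k} + 2^{-N+1}$, and since we may assume $2^{-N+1} \le 2^{M+2-k}$ in the regime where the estimate is non-trivial (and in the trivial regime the full-measure bound $2R \le 2^{M+1} \le 2^{M+1}2^{-k+2}$ suffices for $k \le 2$), we obtain the claimed $2^{M+1}2^{-k+2}$ after absorbing the additive term into the constant. I would present this by separating $k \in \{1,2\}$ (trivial: bound by $|[-R,R]| = 2R \le 2^{M+1} \le 2^{M+1}2^{-k+2}$) from $k \ge 3$, where the periodicity argument above applies cleanly.

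The main obstacle is the bookkeeping around the translated filtrations $\mathcal{D}^\alpha$: one must be careful that the ``predecessor'' structure is consistent as $\alpha$ ranges over a continuum, and in particular that the map $\alpha \mapsto$ (position of $I\cup\tilde I$ in the grid $\mathcal{D}^\alpha_{N-k}$) is, up to the $2^{k-N}$-periodicity, an honest translation so that the relevant bad set is literally a union of at most $\lceil 2R/2^{k-N}\rceil$ intervals each of length at most $|I\cup\tilde I|$. Once this is set up correctly the estimate is immediate; the only real content is identifying the event $\{|P_\alpha(I,\tilde I)| = 2^{k-N}\}$ with ``$I\cup\tilde I$ hits a point of the generation-$(N-k+1)$ grid but the generation-$(N-k)$ interval still contains it'' and checking the containment is automatic once $k-N$ is large enough relative to $\log_2|I|$, which it is by the definition of $N$. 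I would double-check the edge cases where $I\cup\tilde I$ has an endpoint exactly on a dyadic rational (a measure-zero set of $\alpha$, hence irrelevant) and the exact constant so that the final bound reads precisely $2^{M+1}2^{-k+2}$ without hidden slack.
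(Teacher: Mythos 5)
Your approach is essentially the paper's: identify the event $|P_\alpha(I,\tilde I)|=2^{k-N}$ with $I\cup\tilde I$ containing the midpoint $c(J)$ of some $J\in\mathcal{D}^\alpha_{N-k}$ (a grid point that appears at generation $N-k+1$ but not $N-k$), then count periods of length $2^{k-N}$ inside $[-R,R]$ and multiply by the per-period bad measure $|I\cup\tilde I|\le 2^{-N+1}$. The one piece of bookkeeping you flag as an obstacle is handled cleanly in the paper by fixing $J$ in the standard grid $\mathcal{D}_{N-k}$ and noting that $\{\alpha\in[-R,R]:c(J+\alpha)\in I\cup\tilde I\}$ is a single interval of length $\le 2^{-N+1}$, to be summed over the at most $2^{M+1}2^{N-k+1}$ relevant $J$; this treats every $k\ge 1$ uniformly (the paper simply remarks that for $k=1$ the characterisation via $c(J)$ is only necessary, which suffices for an upper bound), so no case split at $k=1,2$ and no ``absorb the additive term into the constant'' step is needed.
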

  \begin{proof}
    Note that for any value of $\alpha,$ one has that $|P_\alpha(I,\tilde{I})|=2^k2^{-N}$ for some positive integer $k.$
    For $k \geq 2,$ the size of the minimal common predecessor in $\mathcal{D}^\alpha$ is exactly $2^{k-N}$ if and only if there is some $J \in \mathcal{D}^\alpha,$ with $|J|=2^{k-N},$ such that $c(J) \in I\cup\tilde{I}.$
    For the case $k=1,$ it is only true that if $J \in \mathcal{D}^{\alpha},$ with $|J| = 2^{1-N},$ is the minimal common predecessor, then $c(J) \in I\cup\tilde{I},$ while the reciprocal does not hold.

    Consider $J \in \mathcal{D}_{N-k},$ and consider as well the translated intervals $J+\alpha,$ for $\alpha \in [-R,R],$ and their midpoints $c(J+\alpha).$
    The set $\{\alpha \in [-R,R] \colon c(J+\alpha) \in I\cup\tilde{I}\}$ has measure $2^{-N+1},$ which is the measure of $I\cup\tilde{I}.$
    Note that it is actually here that we implicitly use that $R \geq 1,$ since otherwise it could be that this set had length $2^{M+1}.$
    Since in $[-R,R]$ there are at most $2^{M+1}2^{N-k+1}$ intervals of length $2^{k-N},$ the result follows immediately.
  \end{proof}
  
  \begin{proof}[Proof of Theorem \ref{thm:zygAverages}]
    Assume without loss of generality that $I_0 = [0,1]$ and fix $R \geq 1.$
    We just need to check that
    \begin{equation*}
      \sup_I |\Delta_1t_R(I)-\Delta_1t_R(\tilde{I})| \leq C < \infty,
    \end{equation*}
    where $I$ ranges over all finite intervals, with $\tilde{I} = I - |I|,$ and where $C$ is independent of the value of $R.$
    Fix a finite interval $I$ and consider the integer $N$ such that $2^{-N-1} < |I| \leq 2^{-N}.$ First, we express
    \begin{equation*}
      \Delta_1t_R(I)-\Delta_1t_R(\tilde{I}) = \frac{1}{2R} \int_{-R}^R \left( \Delta_1t^{(\alpha)}(\alpha+I)-\Delta_1t^{(\alpha)}(\alpha
      +\tilde{I}) \right)\, d\alpha.
    \end{equation*}
    Now, for a given $\alpha,$ consider the coverings $\mathcal{G}^\alpha(I) = \{I_j\}_{j=1}^\infty$ and $\mathcal{G}^\alpha(\tilde{I}) = \{\tilde{I}_j\}_{j=1}^\infty$ given in Lemma \ref{lemma:EqualLengths}, that satisfy $|I_j| = |\tilde{I}_j|$ for $j \geq 1.$
    We can express
    \begin{equation*}
      |\Delta_1t^{(\alpha)}(\alpha+I)-\Delta_1t^{(\alpha)}(\alpha+\tilde{I})| \leq \sum_{j \geq 1} \frac{|I_j|}{|I|}
      \left|\Delta_1t^{(\alpha)}(\alpha + I_j) - \Delta_1t^{(\alpha)}(\alpha + \tilde{I}_j)\right|
    \end{equation*}
    Observe that $\alpha + I_j \in \mathcal{D}^0$ and, since $t^{(\alpha)} \in \zygdyc^0,$ using Lemma \ref{lemma:maximality}
    and Lemma \ref{lemma:DividedDifferencesDyadicDistance} we may bound the previous quantity by
    \begin{equation*}
      \sum_{j \geq 1} \frac{|I_j|}{|I|} \norm{t^{(\alpha)}}{\ast d} \dist_\alpha(I_j,\tilde{I}_j)
      \lesssim \sum_{j \geq 1} 2^{-\frac{j}{2}} \log \left( 2^{N + \frac{j}{2}} |P_\alpha(I,\tilde{I})| \right),
    \end{equation*}
    where we have also used that $\norm{t^{(\alpha)}}{\ast d} \leq 1$ for every $\alpha.$ Summing over $j,$ we get
    \begin{equation*}
      |\Delta_1t^{(\alpha)}(\alpha+I)-\Delta_1t^{(\alpha)}(\alpha+\tilde{I})| \lesssim 1 + N + \log |P_\alpha(I,\tilde{I})|.
    \end{equation*}
    Averaging over $\alpha,$ we have
    \begin{equation*}
      |\Delta_1t_R(I)-\Delta_1t_R(\tilde{I})| \lesssim \frac{1}{R} \int_{-R}^R \left( 1 + N + \log |P_\alpha(I,\tilde{I})| \right)\, d\alpha.
    \end{equation*}
    Set $R_k = \{\alpha \in [-R,R] \colon |P_\alpha(I,\tilde{I})|=2^{k-N}\}$ and recall that, by Lemma
    \ref{lemma:CommonPredecessorSize}, $|R_k| \leq 2^{M+1}2^{-k+2},$ where $M$ is the integer such that $2^{M-1} < R \leq 2^M.$ Then, we can bound the last quantity by
    \begin{equation*}
      2^{-M+1} \sum_{k \geq 1} \int_{R_k} \left( 1 + N + \log |P_\alpha(I,\tilde{I})| \right)\, d\alpha \lesssim \sum_{k \geq 1} 2^{-k} \left( 1 + N + \log 2^{k-N} \right),
    \end{equation*}
    which is bounded by some positive constant $C.$
    Note that the factors depending on $N$ and on $M$ cancel out, which means that this last constant depends neither on $R$ nor on $I.$
  \end{proof}
  
  We are now ready to prove Theorem \ref{thm:IBMODistance}.
  
  \begin{proof}[Proof of Theorem \ref{thm:IBMODistance}]
    Let $f \in \zyg$ and let $\varepsilon_0$ be the infimum in the left-hand side of \eqref{eq:IBMODistance}. First we show that, whenever $\varepsilon < \varepsilon_0,$ there is no function $b \in \IBMO$ such that $\norm{f-b}{\ast} \leq \varepsilon.$
    Indeed, assume that for a given $\varepsilon < \varepsilon_0$ there is $b \in \IBMO$ such that $\norm{f-b}{\ast} \leq \varepsilon.$
    Take $\varepsilon < \varepsilon_1 < \varepsilon_0$ and note that, whenever $|\Delta_2f(x,h)| > \varepsilon_1$ we have that $|\Delta_2b(x,h)| > \varepsilon_1-\varepsilon = \delta > 0.$
    In particular, this means that $A(f,\varepsilon_1) \subseteq A(b,\delta).$
    Thus,
    \begin{equation*}
      \frac{1}{|I|} \int_I\int_0^{|I|} |\Delta_2b(x,h)|^2 \frac{dh\, dx}{h} \geq \frac{\delta^2}{|I|} \int_I\int_0^{|I|} \chi_{A(f,\varepsilon_1)}(x,h) \frac{dh\, dx}{h},
    \end{equation*}
    but the supremum, with $I$ ranging over all finite intervals, of the later quantity is not finite since $\varepsilon_1 < \varepsilon_0.$
    By Theorem \ref{thm:Strichartz}, this contradicts that $b \in \IBMO.$
      
    We are left with showing that there exists a universal constant $C > 0$ such that, for any $\varepsilon > \varepsilon_0,$ there is $b=b(\varepsilon) \in \IBMO$ such that $\norm{f-b}{\ast} \leq C\varepsilon.$
    For any such $\varepsilon,$ by assumption we have that
    \begin{equation}
      \label{eq:ContinuousSupremum}
      C(f,\varepsilon) = \sup_{I} \frac{1}{|I|} \int_I\int_0^{|I|} \chi_{A(f,\varepsilon)}(x,h) \frac{dh\, dx}{h} < \infty.
    \end{equation}
    Assume now, without loss of generality, that $f$ has support in $I_0 = [0,1].$
    We claim that \eqref{eq:ContinuousSupremum} implies that $D(f,\varepsilon)$ defined by \eqref{eq:clibmodMartingale} is finite.
    To see this, take $\varepsilon > \varepsilon_1 > \varepsilon_0,$ and let $J \in \mathcal{D}$ be such that $|\Delta S(J)| > \varepsilon/2,$ which is equivalent to say that $|\Delta_2f(c(J^\ast),|J|)| > \varepsilon.$
    By Lemma \ref{lemma:DividedDifferencesVariation}, there exists $\delta > 0$ such that if $|x - c(J^\ast)| < \delta |J|$ and $1-\delta < h/|J| < 1+\delta,$ then $|\Delta_2f(x,h)| > \varepsilon_1.$
    Applying this to every dyadic interval $J$ with $|\Delta S(J)| > \varepsilon/2,$ we find the upper bound
    \begin{equation*}
      \frac{1}{|I|} \sum_{\substack{J \in \mathcal{D}(I) \\ |\Delta S(J)| > \varepsilon/2}} |J| \lesssim \frac{1}{|I|} \int_I\int_0^{|I|} \chi_{A(f,\varepsilon_1)}(x,h) \frac{dh\, dx}{h} \leq C(f,\varepsilon_1)
    \end{equation*}
    for all $I \in \mathcal{D}.$
    Thus,
    \begin{equation}
    \label{eq:DiscreteDistanceEstimate}
      D(f,\varepsilon) = \sup_{I \in \mathcal{D}} \frac{1}{|I|} \sum_{\substack{J \in \mathcal{D}(I) \\ |\Delta S(J)| > \varepsilon/2}} |J| \lesssim C(f,\varepsilon_1).
    \end{equation}
    
    Next, for each $\alpha\in[-1,1],$ define $f^{(\alpha)}(x) = f(x-\alpha) \in \zygdyc.$
    By \eqref{eq:DiscreteDistanceEstimate} and Theorem \ref{thm:clibmod}, $\dist(f^{(\alpha)},\IBMO_d) \leq \varepsilon.$
    Hence, there are $b^{(\alpha)} \in \IBMO_d$ and $t^{(\alpha)} \in \zygdyc$ such that $f^{(\alpha)} = b^{(\alpha)} + t^{(\alpha)},$ with $\norm{t^{(\alpha)}}{\ast d} \leq \varepsilon$ for all $\alpha \in [-1,1].$
    This allows us to express
    \begin{align*}
      f(x) &= \frac{1}{2} \int_{-1}^1 f^{(\alpha)}(x+\alpha)\, d\alpha \\
      &= \frac{1}{2} \int_{-1}^1 b^{(\alpha)}(x+\alpha)\, d\alpha + \frac{1}{2} \int_{-1}^1 t^{(\alpha)}(x+\alpha)\, d\alpha.
    \end{align*}
    By Theorem \ref{thm:DyadicBMOtoBMO}, taking $R = 1,$ the first integral yields a function $b \in \IBMO.$
    By Theorem \ref{thm:zygAverages}, with $R = 1$ as well, the second integral yields a function $t \in \zyg$ with $\norm{t}{\ast} \leq C \varepsilon,$ where the later constant is the same that appears in Theorem \ref{thm:zygAverages}.
    This completes the proof.
  \end{proof}
  
  \section{The Higher Dimensional Result}
  \label{sec:Measures}
  For a Borel set $A \subset \reals^d,$ denote by $|A|$ its Lebesgue measure.
  Let $x = (x_1,\ldots,x_d) \in \reals^d$ and $h > 0$ and denote by $Q(x,h)$ the cube centred at $x$ and with side length $l(Q) = h$ and edges parallel to the axis.
  For a signed Borel measure $\mu$ on $\reals^d,$ we will treat its densities on cubes as \emph{first divided differences,} and denote them by
  \begin{equation*}
    \Delta_1\mu(x,h) = \frac{\mu(Q(x,h))}{|Q(x,h)|}, \quad (x,h) \in \reals^{d+1}_+,
  \end{equation*}
  and we also define its \emph{second divided differences on cubes} as
  \begin{equation*}
    \Delta_2\mu(x,h) = \Delta_1\mu(x,h) - \Delta_1\mu(x,2h), \quad (x,h) \in \reals^{d+1}_+.
  \end{equation*}
  Here, $\reals^{d+1}_+$ denotes the upper halfspace, $\reals^{d+1}_+ = \{(x,h)\colon x \in \reals^d,\, h > 0\}.$
  We say that a signed Borel measure $\mu$ on $\reals^d$ is a Zygmund measure, $\mu \in \zyg,$ if it satisfies
  \begin{equation*}
    \norm{\mu}{\ast} = \sup_{(x,h) \in \reals^{d+1}_+} |\Delta_2\mu(x,h)| < \infty.
  \end{equation*}
  Note that there can be Zygmund measures that are singular with respect to the Lebesgue measure (see \cite{ref:Kahane} and \cite{ref:AleksandrovAndersonNicolau}).
  Recall that a real valued function $f$ on $\reals^d$ is said to have \emph{bounded mean oscillation in $\reals^d,$} $f \in \BMO(\reals^d),$ if
  \begin{equation*}
    \norm{f}{\BMO} = \sup_Q \left(\frac{1}{|Q|} \int_Q |f(x)-f_Q|^2\, dx\right)^{1/2} < \infty,
  \end{equation*}
  where $Q$ ranges over all finite cubes in $\reals^d$ with edges parallel to the axis and $f_Q = \frac{1}{|Q|} \int_Q f(x)\, dx.$
  We will say that a signed Borel measure $\nu$ on $\reals^d$ is an $\IBMO$ measure, $\nu \in \IBMO,$ if it is absolutely continuous with respect to the Lebesgue measure and its Radon-Nikodym derivative is
  \begin{equation*}
    d\nu (x) = b(x) dx
  \end{equation*}
  for some function $b \in \BMO(\reals^d).$
  Using a characterisation of $\BMO(\reals^d)$ functions due to R.~Strichartz (see \cite{ref:Strichartz}), one can see that such a measure $\nu$ satisfies
  \begin{equation}
    \label{eq:IBMOMeasureCharacterisation}
    \sup_Q \left(\frac{1}{|Q|} \int_Q \int_0^{l(Q)} |\Delta_2\nu(x,h)|^2 \frac{dh\, dx}{h}\right)^{1/2} < \infty.
  \end{equation}
  Conversely, whenever $\nu$ satisfies equation \eqref{eq:IBMOMeasureCharacterisation}, it is an absolutely continuous measure with Radon-Nikodym derivative in $\BMO(\reals^d)$ (see \cite{ref:DoubtsovNicolau}).

  Here we state a version of Theorem \ref{thm:IBMODistance} for Zygmund measures in $\reals^d.$
  For a given Zygmund measure $\mu$ and $\varepsilon > 0,$ consider the set
  \begin{equation*}
    A(\mu,\varepsilon) = \left\{ (x,h) \in \reals^{d+1}_+ \colon |\Delta_2\mu (x,h)| > \varepsilon \right\}.
  \end{equation*}
  \newtheorem*{thm:MeasureIBMODistance}{Theorem \ref{thm:MeasureIBMODistance}}
  \begin{thm:MeasureIBMODistance}
    Let $\mu$ be a compactly supported Zygmund measure on $\reals^d.$
    For each $\varepsilon > 0$ consider
    \begin{equation*}
      C(\mu,\varepsilon) = \sup_Q \frac{1}{|Q|} \int_Q\int_0^{l(Q)} \chi_{A(\mu,\varepsilon)}(x,h) \frac{dh\, dx}{h},
    \end{equation*}
    where $Q$ ranges over all finite cubes with edges parallel to the axis.
    Then
    \begin{equation*}
      \dist(\mu,\IBMO) \simeq \inf \{ \varepsilon > 0 \colon C(\mu,\varepsilon) < \infty \}.
    \end{equation*}
  \end{thm:MeasureIBMODistance}

  The proof of this result follows the same lines as that of Theorem \ref{thm:IBMODistance}.
  Nonetheless, one has to adapt the auxiliary results used in showing that theorem.
  First, we state and show the technical estimate in $\reals^d$ which is analogous to Lemma \ref{lemma:DividedDifferencesVariation}.
  For convenience, given $y \in \reals^{d-1}$ and $h > 0$ we will denote by $q(y,h)$ the cube in $\reals^{d-1}$ centred at $y,$ with sidelength $l(Q) = h$ and edges parallel to the axis.
  
  \begin{lemma}
    Let $\mu \in \zyg$ and assume that $h' > h > 0$ and $|x-t| < h/2.$
    Then
    \begin{multline*}
      |\Delta_2\mu(x,h) - \Delta_2\mu(t,h')|  \\
      \leq C_d\norm{\mu}{\ast} \left( \frac{h'-h}{h}\left(1+\log\left(\frac{h}{h'-h}+1\right)\right) + \frac{|x-t|}{h}\log\left(\frac{h}{|x-t|}+1\right) \right).
    \end{multline*}
    Here, the constant $C_d$ does only depend on the dimension $d.$
  \end{lemma}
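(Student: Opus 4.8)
The plan is to reduce the statement to the one-dimensional estimate of Lemma~\ref{lemma:DividedDifferencesVariation} by slicing $\reals^d$ into one-dimensional fibres and integrating. Recall that $\Delta_1\mu(x,h) = \mu(Q(x,h))/|Q(x,h)|$, and write $Q(x,h)$ as a product of a one-dimensional interval in the first coordinate with a $(d-1)$-dimensional cube $q(y,h)$ in the remaining coordinates. If we fix a direction, say $e_1$, and for each $y \in \reals^{d-1}$ consider the signed measure on $\reals$ given by $\mu_y(E) = \mu(E \times q(y,h))$ for $E \subseteq \reals$ (with $q$ of the appropriate side length), then $\Delta_1\mu(x,h)$ is an average over $y$ of one-dimensional densities of $\mu_y$. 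The Zygmund condition $\norm{\mu}{\ast} < \infty$ controls, uniformly in $y$, the Zygmund seminorm of the primitive of $\mu_y$; this is where one uses that $Q^\ast$ has \emph{double} side length in \emph{all} coordinates, so the comparison of $\mu(Q(x,h))/|Q(x,h)|$ with $\mu(Q(x,2h))/|Q(x,2h)|$ encodes a genuine second difference in each coordinate separately (up to telescoping through the coordinates, which costs only a factor depending on $d$).

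First I would carry out the reduction carefully: telescope the difference $\Delta_2\mu(x,h) - \Delta_2\mu(t,h')$ by changing one coordinate at a time, so that it suffices to bound the oscillation of $\Delta_1\mu$ (and hence of $\Delta_2\mu$) under a change of centre or step in a single coordinate, with the other coordinates held fixed at some intermediate cube. For a single coordinate, $\Delta_1\mu(x,h)$ as a function of the relevant scalar variable and the scale $h$ behaves exactly like a first divided difference $\Delta_1 F$ of a Zygmund function $F$ on $\reals$, where $F$ is (a scale-normalised version of) the primitive in that coordinate of the averaged measure; and the hypothesis $|x-t| < h/2$ together with $h' > h$ places us precisely in the regime covered by \eqref{eq:FirstDividedDifferences}, \eqref{eq:VariationEqualStep} and \eqref{eq:VariationEqualCentre}. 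Applying those one-dimensional estimates fibre by fibre and then averaging over the transverse variable $y$ (the average is a probability average, so it does not increase the bound) yields, for each coordinate, a contribution of the asserted form, namely
\begin{equation*}
  \norm{\mu}{\ast}\left( \frac{h'-h}{h}\left(1+\log\left(\frac{h}{h'-h}+1\right)\right) + \frac{|x-t|}{h}\log\left(\frac{h}{|x-t|}+1\right) \right),
\end{equation*}
up to an absolute constant. Summing the $d$ coordinate contributions produces the factor $C_d$.

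The main obstacle I anticipate is bookkeeping rather than a genuine analytic difficulty: one must check that the intermediate cubes arising in the coordinate-by-coordinate telescoping still satisfy the hypotheses $|x-t| < h/2$ and $h' > h > 0$ needed to invoke the scalar estimates, and that the measures $\mu_y$ obtained by integrating out the frozen coordinates really do have Zygmund primitives with seminorm $\lesssim \norm{\mu}{\ast}$ uniformly in the frozen data. The latter point requires expressing $\mu(Q(x,h)) - 2\mu(Q(x',h)) + \mu(Q(x'',h))$ in one coordinate as a combination of second differences of $\mu$ over genuine cubes (plus lower-order terms controlled by iterating the Zygmund bound), which is a direct but slightly tedious computation; once it is in place, the averaging step and the appeal to \eqref{eq:VariationEqualStep}--\eqref{eq:VariationEqualCentre} are immediate, and the lemma follows exactly as in the one-dimensional case.
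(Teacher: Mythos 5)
Your reduction to Lemma~\ref{lemma:DividedDifferencesVariation} by slicing has a genuine gap: the ``slice'' is not a well-defined one-dimensional Zygmund object. The measure $\mu_y(E) = \mu(E \times q(y,h))$ depends on the transverse side length $h$, so when you compare $\Delta_1\mu(x,h)$ with $\Delta_1\mu(x,h')$ (or with $\Delta_1\mu(x,2h)$, as already appears inside a single $\Delta_2\mu$), the two quantities come from \emph{different} slice measures, and there is no single one-dimensional primitive $F$ whose divided differences reproduce both. Even if you freeze the transverse cube, the Zygmund condition on $\mu$ controls densities of genuine $d$-dimensional cubes where all side lengths scale together, whereas the one-dimensional second difference of $\mu_y$ at a step $\delta \ll h$ compares two thin slabs of dimensions $\delta\times h\times\cdots\times h$. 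Controlling precisely such a comparison is what the paper's estimate \eqref{eq:DifferenceParallelepiped} is for, and that estimate carries a factor $\log(h/\delta+1)$; so the slice is at best log-Zygmund, its seminorm is not controlled by $\norm{\mu}{\ast}$ uniformly in $h$, and the one-dimensional estimates \eqref{eq:VariationEqualStep}--\eqref{eq:VariationEqualCentre} cannot simply be invoked. The point you defer to the end, ``expressing the one-coordinate second difference as a combination of second differences over genuine cubes,'' is not tedious bookkeeping but the whole substance of the $d$-dimensional argument; once one does it, one is not appealing to the one-dimensional lemma at all.

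The paper avoids the slicing entirely. It first establishes the $d$-dimensional first-divided-difference estimate \eqref{eq:MeasureFirstDifferences} directly from the harmonic (Bloch) extension of $\mu$ on $\reals^{d+1}_+$, in perfect analogy with the scalar case but without reducing dimension. For a change of centre in one coordinate, it writes $\Delta_2\mu(x,h)-\Delta_2\mu(t,h)$ via measures of the slabs $l_\pm, L_\pm$, covers $l_+$ and its translate $l_-$ by families of genuine cubes $\{P_j^m\}$, $\{R_j^m\}$ whose sizes are governed by the binary expansion of $h/|x-t|$, and applies \eqref{eq:MeasureFirstDifferences} to each matched pair; the resulting series sums exactly to the logarithmic factor in the statement. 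The change-of-step case is then reduced to the change-of-centre case by decomposing the stretched box $R(x,h,h')$ into the cube $Q(x,h)$ and two slabs. If you keep the coordinate-by-coordinate telescoping as a scaffold, you would still need to replace the appeal to the one-dimensional Zygmund lemma by this cube-covering argument, at which point the slicing does no work.
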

  \begin{proof}
    The proof is split in two steps.
    First, we find an estimate for the case $h=h'$ and then another one for $x=t.$
    We start showing that, for $h > 0,$ when $|x-t| < h/2$
    \begin{equation}
      \label{eq:MeasureDifferencesTrans}
      |\Delta_2\mu(x,h)-\Delta_2\mu(t,h)| \leq C_d \norm{\mu}{\ast} \frac{|x-t|}{h} \log\left(\frac{h}{|x-t|}+1\right).
    \end{equation}
    First, if $|x-t| > h/2,$ then
    \begin{equation}
      \label{eq:MeasureFirstDifferences}
      |\Delta_1\mu(x,h)-\Delta_1\mu(t,h)| \lesssim \norm{\mu}{\ast}\log\left(\frac{|x-t|}{h}+1\right).
    \end{equation}
    The argument to show this bound is the same as in Lemma \ref{lemma:DividedDifferencesVariation}.
    The only difference is that one has to consider $u$ to be the harmonic extension of $\mu$ on the upper half-space $\reals^{d+1}_+,$ which will be itself a Bloch function, and use the well known fact (see Chapter V of \cite{ref:Stein} or \cite{ref:Llorente}) that
    \begin{equation*}
      |\Delta_1\mu(x,h)-u(x,h)| \lesssim \norm{\mu}{\ast}.
    \end{equation*}
    To show \eqref{eq:MeasureDifferencesTrans}, assume without loss of generality that $x = (x_1,\ldots,x_{d-1},x_d)$ and $t = (x_1,\ldots,x_{d-1},x_d')$ with $x_d' < x_d,$ and $|x-t| < h/2.$
    If we denote $y = (x_1,\ldots,x_{d-1}) \in \reals^{d-1},$ one can see that
    \begin{equation*}
      |Q(x,h)| \left(\Delta_2\mu(x,h) - \Delta_2\mu(t,h)\right) =
      \mu(l_+) - \mu(l_-) + \frac{\mu(L_-)}{2^d} - \frac{\mu(L_+)}{2^d},
    \end{equation*}
    where $l_+ = q(y,h) \times [x_d'+h/2,x_d+h/2),$ $l_- = q(y,h) \times [x_d'-h/2,x_d-h/2),$  $L_+ = q(y,2h) \times [x_d'+h,x_d+h)$ and $L_- = q(y,2h) \times [x_d'-h,x_d-h)$ are parallelepipeds at opposite sides of the cubes $Q(x,h)$ and $Q(x,2h)$ respectively.
    We just show how to estimate $|\mu(l_+)-\mu(l_-)|,$ as the other terms are estimated in the same way.
    The idea here is to cover $l_+$ with cubes $\{P_j\}$ and to use a translated cover $\{R_j\}$ for $l_-.$
    
    \begin{figure}[ht]
      \begin{center}
        \input{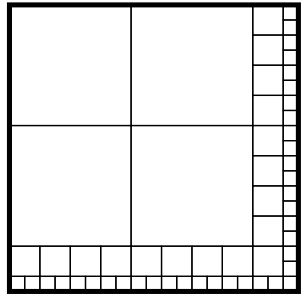}
        \caption{Parallelepiped $l_+$ seen from its base (bold square), and the distribution of the cubes $P_j^m.$ Cubes of the same size belong to the same generation.}
        \label{fig:CubesDistributionParallelepiped}
      \end{center}
    \end{figure}
    
    In order to cover $l_+$ with the appropriate cubes, write first $\frac{h}{|x-t|} = \sum_{n\geq 0} k_n2^{-n},$ where $k_0 \geq 2$ and $k_n$ is $0$ or $1$ for $n\geq 1$ (as in a binary expansion).
    We construct a generation $0$ placing $k_0^{d-1}$ cubes with mutually disjoint interiors of side length $|x-t|$ at one of the corners of $l_-,$ forming alltogether a smaller parallelepiped with one side of length $|x-t|$ and the rest of length $k_0|x-t|.$
    Let us denote by $\{P_0^{m}\}$ the set of cubes of generation $0.$
    Assume we have constructed cubes up to generation $j-1,$ that is, we have chosen $\{P_i^m\}_{i=0}^{j-1}.$
    At generation $j$ either we do nothing if $k_j = 0$ or, when $k_j = 1,$ we add a layer of cubes $\{P_j^m\}$ of side length $2^{-j}|x-t|,$ such that $\{P_i^m\}_{i=0}^j$ have pairwise disjoint interiors, in order to get a new square based parallelepiped with one side length $|x-t|$ and the rest of $(\sum_{n=0}^j k_n2^{-n})|x-t|$ (see Figure \ref{fig:CubesDistributionParallelepiped}).
    Let $\{P_j^m\}$ be the cubes of generation $j$ and note that their total volume is
    \begin{equation*}
      \sum_m |P_j^m| = |x-t|^d \left[\left(\sum_{n=0}^jk_n2^{-n}\right)^{d-1} - \left(\sum_{n=0}^{j-1}k_n2^{-n}\right)^{d-1}\right].
    \end{equation*}
    If $d \geq 2$ we deduce
    \begin{equation*}
      \sum_m |P_j^m| \lesssim d |x-t|^d k_j2^{-j} \left(\sum_{n = 0}^j k_n2^{-n}\right)^{d-2}.
    \end{equation*}
    Since $\sum_{n=0}^j k_n 2^{-n} \leq h/|x-t|,$ we deduce that
    \begin{equation}
      \label{eq:LayerVolumeBound}
      \sum_m |P_j^m| \lesssim d |x-t|^2 h^{d-2} k_j 2^{-j}
    \end{equation}
    Since the distance between the centres of $P_j^m$ and $R_j^m$ is bounded by a fixed multiple of $h,$ applying equation \eqref{eq:MeasureFirstDifferences} we get that
    \begin{equation}
      \label{eq:CubesDifferences}
      |\mu(P_j^m)-\mu(R_j^m)| \lesssim \norm{\mu}{\ast} |P_j^m| \log\left(\frac{h}{l(P_j^m)}+1\right),
    \end{equation}
    and using \eqref{eq:CubesDifferences}, we have
    \begin{align*}
      |\mu(l_+)-\mu(l_-)| &\leq \sum_j\sum_m |\mu(P_j^m)-\mu(R_j^m)| \\
      &\leq C \norm{\mu}{\ast} \sum_j\sum_m |P_j^m| \log\left(\frac{h}{2^{-j}|x-t|}+1\right).
    \end{align*}
    Summing over $m$ and using \eqref{eq:LayerVolumeBound}, this is bounded by
    \begin{equation*}
      C\norm{\mu}{\ast} d |x-t|^2 h^{d-2} \log\left(\frac{h}{|x-t|}+1\right) \sum_jk_j2^{-j}j,
    \end{equation*}
    and we deduce that
    \begin{equation}
      \label{eq:DifferenceParallelepiped}
      |\mu(l_+) - \mu(l_-)| \leq C d\norm{\mu}{\ast} |x-t| h^{d-1} \log\left(\frac{h}{|x-t|}+1\right).
    \end{equation}
    This and the analog estimate for $|\mu(L_+)-\mu(L_-)|$ yield estimate \eqref{eq:MeasureDifferencesTrans}.

    The second step is to show that, if $h' > h > 0,$ then
    \begin{equation}
      \label{eq:MeasureDifferencesDilate}
      |\Delta_2\mu(x,h')-\Delta_2\mu(x,h)| \leq C_d \norm{\mu}{\ast} \frac{h'-h}{h} \left(1+\log\left(\frac{h}{h'-h}+1\right)\right).
    \end{equation}
    Let $R(x,h,h') = q(y,h) \times [x_d-h'/2,x_d+h'/2),$ where $y \in \reals^{d-1}$ is such that $x = (y,x_d).$
    Note that $R(x,h,h')$ is the parallelepiped obtained from dilating the cube $Q(x,h)$ just in one direction.
    Denote as well
    \begin{equation*}
      \Delta_2\mu(x,h,h') = \frac{\mu(R(x,h,h'))}{|R(x,h,h')|} - \frac{\mu(R(x,2h,2h'))}{|R(x,2h,2h')|}.
    \end{equation*}
    To show \eqref{eq:MeasureDifferencesDilate}, it is enough to see that
    \begin{equation}
      \label{eq:SecondDividedDifferenceParallelepiped}
      |\Delta_2\mu(x,h,h')-\Delta_2\mu(x,h)|\! \leq\! C_d \norm{\mu}{\ast} \frac{h'-h}{h} \left(1+\log\left(\frac{h}{h'-h}+1\right)\right).
    \end{equation}
    
    \begin{figure}[ht]
      \begin{center}
        \input{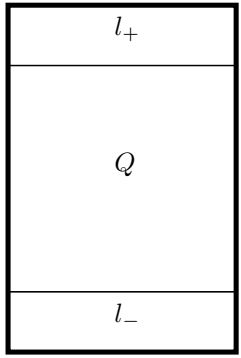}
        \caption{The parallelepiped $R$ can be decomposed into the cube $Q$ and the square based parallelepipeds $l_+$ and $l_-.$}
        \label{fig:RDecomposition}
      \end{center}
    \end{figure}
    
    Let us denote $Q=Q(x,h),$ $\tilde{Q}=Q(x,2h),$ $R=R(x,h,h')$ and $\tilde{R}=R(x,2h,2h').$
    Note that we can decompose $R$ as the disjoint union $Q \cup l_+ \cup l_-,$ where $l_+$ and $l_-$ are parallelepipeds similar to the ones we used before (see Figure \ref{fig:RDecomposition}).
    In the same way, decompose $\tilde{R} = \tilde{Q} \cup L_+ \cup L_-,$ and note that $L_+$ (and also $L_-)$ can be regarded as the union $\bigcup_{i=1}^{2^d} L_+^i,$ where each $L_+^i$ is a translation of $l_+.$
    Now, express
    \begin{multline*}
      \Delta_2\mu(x,h) - \Delta_2\mu(x,h,h') = \frac{\mu(Q)}{|Q|} - \frac{\mu(\tilde{Q})}{|\tilde{Q}|} - \frac{\mu(R)}{|R|} + \frac{\mu(\tilde{R})}{|\tilde{R}|} = \\
      \frac{h'-h}{h'} \left(\frac{\mu(Q)}{|Q|} - \frac{\mu(\tilde{Q})}{|\tilde{Q}|}\right) - \left(\frac{\mu(l_+)}{h^{d-1}h'} - \frac{\mu(L_+)}{2^dh^{d-1}h'}\right) - \left(\frac{\mu(l_-)}{h^{d-1}h'} - \frac{\mu(L_-)}{2^dh^{d-1}h'}\right).
    \end{multline*}
    The first term is $\Delta_2\mu(x,h) (h'-h)/h',$ which is bounded by $\norm{\mu}{\ast}(h'-h)/h.$
    We will now show that
    \begin{equation}
      \label{eq:DifferenceParallelepipedDilated}
      \left| \frac{\mu(l_+)}{h^{d-1}h'} - \frac{\mu(L_+)}{2^dh^{d-1}h'} \right| \leq C_d \norm{\mu}{\ast} \frac{h'-h}{h} \left( 1 + \log\left(\frac{h}{h'-h}+1\right) \right)
    \end{equation}
    The last term is estimated in a similar way.
    First, we use the decomposition of $L_+$ to split the difference as follows
    \begin{equation*}
      \left|\frac{\mu(l_+)}{h^{d-1}h'} - \frac{\mu(L_+)}{2^dh^{d-1}h'}\right| \leq \frac{1}{2^dh^{d-1}h'} \sum_{i=1}^{2^d} |\mu(l_+)-\mu(L_+^i)|.
    \end{equation*}
    For each term in this sum, we can use the estimate in \eqref{eq:DifferenceParallelepiped} for parallelepipeds, just taking into account that now the role of $h$ is taken by $Ch$ and $h'-h$ plays the role of $|x-t|.$
    This gives \eqref{eq:DifferenceParallelepipedDilated}, which yields \eqref{eq:SecondDividedDifferenceParallelepiped} and finishes the proof.
  \end{proof}

  We also need a dyadic version of Theorem \ref{thm:MeasureIBMODistance}.
  We say that $Q$ is a \emph{dyadic cube} in $\reals^d$ if it is of the form $[k_12^{-n},(k_1+1)2^{-n}) \times \ldots \times [k_d2^{-n},(k_d+1)2^{-n})$ where $k_1,\ldots,k_d \in \integers$ and $n \geq 0,$ or if it is of the form $[k_12^{-n}-t_n,(k_1+1)2^{-n}-t_n) \times \ldots \times [k_d2^{-n}-t_n,(k_d+1)2^{-n}-t_n)$ where $k_1,\ldots,k_d \in \integers,$ $n < 0$ and where $t_n$ is the quantity defined in Section \ref{sec:Dyadic}.
  We denote here the set of dyadic cubes in $\reals^d$ by $\mathcal{D}$ and the set of dyadic cubes of side length $2^{-n}$ by $\mathcal{D}_n.$
  As we did before, if $Q_0$ is a given arbitrary cube, we may refer to the set of dyadic cubes contained in $Q_0$ by $\mathcal{D}(Q_0).$
  For future convenience, given a signed Borel measure $\mu$ on $\reals^d,$ we define the \emph{dyadic second divided difference} as
  \begin{equation*}
    \Delta_2^d\mu(Q) = \Delta_1\mu(Q) - \Delta_1\mu(Q^\ast), \quad Q \in \mathcal{D},
  \end{equation*}
  where we used $Q^\ast$ to denote the unique dyadic cube that contains $Q$ and is such that $l(Q^\ast) = 2l(Q).$
  We will also need the \emph{maximal dyadic second divided difference,} defined by
  \begin{equation*}
    \Delta_2^\ast\mu(Q) = \max_{Q'} |\Delta_1\mu(Q') - \Delta_1\mu(Q)|, \quad Q \in \mathcal{D},
  \end{equation*}
  where $Q'$ ranges over all dyadic cubes contained in $Q$ such that $l(Q') = l(Q)/2.$
  A signed Borel measure $\mu$ on $\reals^d$ is called a \emph{dyadic Zygmund measure,} $\mu \in \zygdyc,$ if
  \begin{equation*}
    \norm{\mu}{\ast d} = \sup_{Q \in \mathcal{D}} \Delta_2^\ast\mu(Q) < \infty.
  \end{equation*}
  A real valued function $f$ on $\reals^d$ is said to have \emph{bounded dyadic mean oscillation} on $\reals^d,$ $f \in \BMO_d(\reals^d),$ if
  \begin{equation*}
    \norm{f}{\BMO d} = \sup_{Q \in \mathcal{D}} \left(\frac{1}{|Q|} \int_Q |f(x) - f_Q|^2 \, dx\right)^{1/2} < \infty.
  \end{equation*}
  We will say that a signed Borel measure $\nu$ on $\reals^d$ is a \emph{dyadic $\IBMO$ measure,} $\nu \in \IBMO_d,$ if it is absolutely continuous and its derivative is
  \begin{equation*}
    d\nu = b(x)\, dx,
  \end{equation*}
  where $b \in \BMO_d(\reals^d).$
  It can be checked that $\nu$ is such a measure if and only if it satisfies
  \begin{equation*}
    \sup_{Q \in \mathcal{D}} \left(\frac{1}{|Q|} \sum_{R \in \mathcal{D}(Q)} |\Delta_2^d\nu(R)|^2|R|\right)^{1/2} < \infty.
  \end{equation*}
  The analog of Theorem \ref{thm:MeasureIBMODistance} for these dyadic spaces is the following.
  
  \begin{thm}
    \label{thm:MeasureIBMODyadic}
    Let $\mu$ be a compactly supported measure in $\zygdyc.$
    For each $\varepsilon > 0$ consider
    \begin{equation*}
      D(\mu,\varepsilon) = \sup_{Q \in \mathcal{D}} \frac{1}{|Q|} \sum_{\substack{R \in \mathcal{D}(Q) \\ \Delta_2^\ast\mu(R^\ast) > \varepsilon}} |R|.
    \end{equation*}
    Then,
    \begin{equation}
      \label{eq:MeasureDistanceIBMODyadic}
      \dist(\mu,\IBMO_d) = \inf \{\varepsilon > 0 \colon D(\mu,\varepsilon) < \infty\}.
    \end{equation}
  \end{thm}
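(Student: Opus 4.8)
The plan is to adapt the proof of Theorem~\ref{thm:clibmod}, replacing the average growth martingale by the density martingale of $\mu$. Assume $\mu$ is supported on a dyadic cube $Q_0$ and associate to it the $d$-dimensional dyadic martingale $S$ with $S_n(Q)=\Delta_1\mu(Q)=\mu(Q)/|Q|$ for $Q\in\mathcal{D}_n$; this is a martingale because $\mu(Q)$ is the sum of $\mu$ over the $2^d$ children of $Q$. Its jumps are $\Delta S(Q)=\Delta_2^d\mu(Q)$, one has $\norm{\mu}{\ast d}=\sup_{Q\in\mathcal{D}}|\Delta S(Q)|$, and, crucially, $\Delta_2^\ast\mu(Q)=\max_{Q'}|\Delta S(Q')|$, where $Q'$ runs over the children of $Q$. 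On the other hand, a measure $\nu$ belongs to $\IBMO_d$ precisely when its density martingale $B$ (the averaging martingale of its $\BMO_d$ density) satisfies the square function bound $\sup_{Q\in\mathcal{D}}\frac{1}{|Q|}\sum_{R\in\mathcal{D}(Q)}|\Delta B(R)|^2|R|<\infty$, which is the martingale reformulation of the characterisation stated above, obtained via the orthogonality in $\Lp{2}$ of the martingale differences. Set $\varepsilon_0=\inf\{\varepsilon>0\colon D(\mu,\varepsilon)<\infty\}$; since $D(\mu,\cdot)$ is nonincreasing, $D(\mu,\varepsilon)<\infty$ for $\varepsilon>\varepsilon_0$ and $D(\mu,\varepsilon)=+\infty$ for $\varepsilon<\varepsilon_0$.

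For the upper bound, fix $\varepsilon>\varepsilon_0$ and define an approximating martingale $B$ by $B(Q_0)=S(Q_0)$ and, for each $P\in\mathcal{D}(Q_0)$, $\Delta B(Q')=\Delta S(Q')$ for every child $Q'$ of $P$ if $\Delta_2^\ast\mu(P)>\varepsilon$ and $\Delta B(Q')=0$ for every child of $P$ otherwise. Then for every cube $P$ one has $\Delta_2^\ast(\mu-\nu_B)(P)=\max_{Q'}|\Delta(S-B)(Q')|$, which is $0$ if $\Delta_2^\ast\mu(P)>\varepsilon$ and equals $\Delta_2^\ast\mu(P)\le\varepsilon$ otherwise (above $Q_0$ the two densities agree because $\int_{Q_0}b=\mu(Q_0)$), so $\norm{\mu-\nu_B}{\ast d}\le\varepsilon$. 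Moreover, using $|\Delta S(R)|\le\norm{\mu}{\ast d}$ and the definition of $D(\mu,\varepsilon)$,
\[
  \frac{1}{|Q|}\sum_{R\in\mathcal{D}(Q)}|\Delta B(R)|^2|R|\le\norm{\mu}{\ast d}^2\,\frac{1}{|Q|}\sum_{\substack{R\in\mathcal{D}(Q)\\ \Delta_2^\ast\mu(R^\ast)>\varepsilon}}|R|\le\norm{\mu}{\ast d}^2\,D(\mu,\varepsilon)
\]
for all $Q\in\mathcal{D}$, so $B$ obeys the square function bound. Hence $b(x)=\lim_n B_n(x)$ exists almost everywhere and lies in $\Lp{2}(Q_0)$ because $\sum_{R\in\mathcal{D}(Q_0)}|\Delta B(R)|^2|R|<\infty$; extending $b$ by zero, the measure $d\nu_B=b(x)\,dx$ is in $\IBMO_d$, and $\dist(\mu,\IBMO_d)\le\norm{\mu-\nu_B}{\ast d}\le\varepsilon$. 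Letting $\varepsilon\downarrow\varepsilon_0$ gives $\dist(\mu,\IBMO_d)\le\varepsilon_0$.

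For the reverse inequality, suppose $\varepsilon<\varepsilon_0$ and that there is $\nu\in\IBMO_d$ with $\norm{\mu-\nu}{\ast d}\le\varepsilon$; let $B$ be its density martingale. Choose $\varepsilon<\varepsilon_1<\varepsilon_0$, so that $D(\mu,\varepsilon_1)=+\infty$. For each cube $P$ with $\Delta_2^\ast\mu(P)>\varepsilon_1$ there is a child $P'$ of $P$ with $|\Delta S(P')|>\varepsilon_1$, and since $|\Delta(S-B)(P')|\le\Delta_2^\ast(\mu-\nu)(P)\le\norm{\mu-\nu}{\ast d}\le\varepsilon$, we get $|\Delta B(P')|>\varepsilon_1-\varepsilon=\delta>0$. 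Keeping, for each such $P\subseteq Q$, only the term coming from $P'$,
\[
  \frac{1}{|Q|}\sum_{R\in\mathcal{D}(Q)}|\Delta B(R)|^2|R|\ge\frac{\delta^2}{2^d}\,\frac{1}{|Q|}\sum_{\substack{P\in\mathcal{D}(Q)\\ \Delta_2^\ast\mu(P)>\varepsilon_1}}|P|,
\]
and the supremum over $Q$ of the right-hand side is finite if and only if $D(\mu,\varepsilon_1)$ is finite, hence it equals $+\infty$. This contradicts the square function bound for $B$, so $\dist(\mu,\IBMO_d)\ge\varepsilon_0$, and \eqref{eq:MeasureDistanceIBMODyadic} follows.

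The only point that requires care compared with the one dimensional argument is that $\Delta_2^\ast\mu(P)$ is a maximum over the $2^d$ children of $P$ rather than being governed by a single martingale jump; this is why the truncation of the martingale must be carried out cube by cube at the level of the parents, so that the estimate $\norm{\mu-\nu_B}{\ast d}\le\varepsilon$ remains exact, and why the lower bound loses a harmless factor $2^{-d}$. Identifying $\IBMO_d$ measures with martingales obeying the square function bound, checking that the zero extension of $b$ past $Q_0$ still has dyadic bounded mean oscillation on $\reals^d$, and comparing the sum defining $D(\mu,\varepsilon)$ with the sum over parent cubes, are all routine and proceed exactly as in the one dimensional case.
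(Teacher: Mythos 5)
Your proof is correct and follows essentially the same route as the paper: identify $\mu$ with the density martingale $S_n(Q)=\mu(Q)/|Q|$, truncate its jumps by the parent-level condition $\Delta_2^\ast\mu(P)>\varepsilon$ to build the approximant, verify the square-function bound, and for the lower bound pass from $\Delta_2^\ast\mu(P)>\varepsilon_1$ to $\Delta_2^\ast\nu(P)>\delta$ and sum. You are in fact slightly more careful than the paper's write-up: the paper's displayed inequality in the lower-bound step omits the $2^{-d}$ factor that comes from the maximum over the $2^d$ children being attained at only one child, which you correctly insert and rightly note is immaterial since only unboundedness is used.
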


  Note that, as we did for functions on $\reals,$ we can rewrite this result in terms of dyadic martingales on $\reals^d.$
  We define a \emph{dyadic martingale on $\reals^d$} as a sequence of functions $S = \{S_n\}_{n=0}^\infty$ such that $S_n$ is constant on any cube $Q \in \mathcal{D}_n$ and such that
  \begin{equation*}
    S_n | Q = \frac{1}{2^d} \sum_{\substack{Q' \in \mathcal{D}_{n+1}\\ Q' \subset Q}} S_{n+1} | Q',
  \end{equation*}
  for all $Q \in \mathcal{D}_n,$ $n \geq 0.$
  Given a measure $\mu \in \zyg,$ we can define a dyadic martingale by taking
  \begin{equation}
    \label{eq:DensitiesMartingale}
    S_n(Q) = \Delta_1\mu(Q), \quad Q \in \mathcal{D}_n, n \geq 0,
  \end{equation}
  and then $\Delta S(Q) = S_n(Q) - S_{n-1}(Q^\ast) = \Delta_2^d\mu(Q),$ for $Q \in \mathcal{D}_n,$ and we can rewrite Theorem \ref{thm:MeasureIBMODyadic} in terms of martingales.
  Following this relation between dyadic second divided differences for measures and martingale jumps, we will denote $\Delta^\ast S(Q) = \Delta_2^\ast\mu(Q).$

  \begin{proof}[Proof of Theorem \ref{thm:MeasureIBMODyadic}]
    Assume that $\mu$ is supported on the unit cube $Q_0 = [0,1]^d.$
    We need to prove that, for a given $\varepsilon > 0,$ there is a measure $\nu \in \IBMO_d$ satisfying $\norm{\mu-\nu}{\ast d} \leq \varepsilon$ if and only if $D(\mu,\varepsilon) < \infty.$
    Denote by $\varepsilon_0$ the infimum in the left-hand side of \eqref{eq:MeasureDistanceIBMODyadic}.

    Given $\varepsilon > \varepsilon_0,$ consider the martingale $S$ defined by \eqref{eq:DensitiesMartingale}.
    Approximate the martingale $S$ by another dyadic martingale $B$ in the following way.
    Start taking $B(Q_0) = S(Q_0).$
    Then, for $Q \in \mathcal{D}(Q_0),$ set $\Delta B(Q) = \Delta S(Q)$ whenever $\Delta^\ast S(Q^\ast) > \varepsilon,$ and set $\Delta B(Q) = 0$ otherwise.
    By construction, it is clear that $|\Delta S(Q) - \Delta B(Q)| \leq \varepsilon$ for any dyadic cube $Q.$
    Moreover, for any such cube $Q,$ we have that
    \begin{equation}
      \label{eq:IBMOMeasureCondition}
      \frac{1}{|Q|} \sum_{R \in \mathcal{D}(Q)} |\Delta B(R)|^2|R| = \frac{1}{|Q|} \sum_{\substack{R \in \mathcal{D}(Q) \\ \Delta^\ast S(R^\ast) > \varepsilon}} |\Delta S(R)|^2 |R| \lesssim \norm{\mu}{\ast} D(\mu,\varepsilon).
    \end{equation}
    Define now $b(x) = \lim_n B_n(x) = \sum_{n=1}^\infty \Delta B_n(x).$
    Using that, for any dyadic martingale, the increments $\Delta B_j$ are $\Lp{2}$ orthogonal, we get that
    \begin{equation*}
      \int_{Q_0} b(x)^2\, dx = \int_{Q_0} \sum_{n=1}^\infty |\Delta B_n(x)|^2\, dx = \sum_{R \in \mathcal{D}(Q_0)} |\Delta B(R)|^2 |R| < \infty,
    \end{equation*}
    so that $b \in \Lp{2}$ and it is finite almost everywhere.
    Hence, the measure $\nu$ defined by
    \begin{equation*}
      d\nu = b(x)\, dx,
    \end{equation*}
    is an absolutely continuous measure that, by \eqref{eq:IBMOMeasureCondition}, is an $\IBMO_d$ measure such that $\norm{\mu-\nu}{\ast d} \leq \varepsilon.$

    On the other hand, if $\varepsilon < \varepsilon_0,$ there exists no measure $\nu \in \IBMO_d$ satisfying $\norm{\mu-\nu}{\ast d} \leq \varepsilon.$
    Indeed, take $\varepsilon < \varepsilon_1 < \varepsilon_0$ and assume that there is $\nu \in \IBMO_d$ such that $\norm{\mu-\nu}{\ast d} \leq \varepsilon.$
    Then, for any $Q \in \mathcal{D}$ such that $\Delta_2^\ast\mu(Q^\ast) > \varepsilon_1,$ we have that $\Delta_2^\ast\nu (Q^\ast) > \varepsilon_1 - \varepsilon = \delta > 0.$
    Thus
    \begin{equation*}
      \frac{1}{|Q|} \sum_{R \in \mathcal{D}(Q)} |\Delta_2^d\nu(R)|^2 |R| \geq \frac{\delta^2}{|Q|} \sum_{\substack{R \in \mathcal{D}(Q) \\ \Delta_2^\ast\mu(R^\ast)>\varepsilon_1}} |R|,
    \end{equation*}
    but the supremum over $Q \in \mathcal{D}$ of this last quantity is $\delta^2 D(\mu,\varepsilon_1),$ which is infinite since $\varepsilon_1 < \varepsilon.$
    This contradicts that $\nu \in \IBMO_d.$
  \end{proof}

  The proof of Theorem \ref{thm:MeasureIBMODistance} follows the same lines than the proof of Theorem \ref{thm:IBMODistance}.
  We just mention that the construction used to prove Theorem \ref{thm:zygAverages} is easily adapted to the setting of $\reals^d,$ except for the following detail.
  Let $Q$ be a cube in $\reals^d$ and consider the covering $\mathcal{F}(Q) = \{R_j\}$ of $Q$ by maximal dyadic cubes, in the same sense as we did in $\reals.$
  In the case $d=1$ we could have at most two elements of the same size in $\mathcal{F}(Q),$ but this does not hold for $d \geq 2.$
  For $d \geq 2,$ the amount of cubes $R_j$ in $\mathcal{F}(Q)$ of size $|R_j| = 2^{-kd}|Q|,$ for some $k \geq 1,$ is of the order of $2^{k(d-1)}.$
  Using this bound, one sees that the sums appearing in the estimates in the proof of Theorem \ref{thm:zygAverages} are convergent and bounded by a universal constant.
  
  \section{An Application to Sobolev Spaces}
  \label{sec:Sobolev}
  Fix $1 < p < \infty.$
  Consider the Sobolev space $\sobolev{1}{p}$ of functions $f \in \Lp{p}$ whose derivative $f'$ in the sense of distributions is also in $\Lp{p}.$
  Consider as well, in the Zygmund class, the subspace $\zyg^p = \sobolev{1}{p}\cap\zyg.$
  For $x \in \reals,$ consider the truncated cone $\Gamma(x) = \{(t,h) \in \reals^2_{+} \colon |x-t| < h < 1\}.$
  In \cite{ref:Nicolau} it is shown that a function $f \in \Lp{p}$ is in the Sobolev space $\sobolev{1}{p}$ if and only if $C(f) \in \Lp{p},$ where
  \begin{equation*}
    C(f)(x) = \left( \int_{\Gamma(x)} |\Delta_2f(s,t)|^2 \frac{ds\, dt}{t^2} \right)^{1/2},\quad x \in \reals.
  \end{equation*}
  
  The purpose of this section is to prove Theorem \ref{thm:sobolev}.
  Following the same scheme as before, we first need a dyadic version of the previous theorem.
  Let us first recall some more concepts and standard results of Martingale Theory that will be useful later.
  The \emph{quadratic characteristic} of a dyadic martingale $S$ is the function
  \begin{equation*}
    \langle S \rangle (x) = \left( \sum_{n=1}^{\infty} |\Delta S_n(x)|^2 \right)^{1/2},\quad x \in \reals,
  \end{equation*}
  and its \emph{maximal function} is
  \begin{equation*}
    S^\ast (x) = \sup_n |S_n(x)-S_0(x)|, \quad x \in \reals.
  \end{equation*}
  Given $0 < p < \infty$ and a dyadic martingale $S,$ the Burkholder-Davis-Gundy Inequality (see \cite{ref:BanuelosMoore}) states that there exists a constant $C = C(p) > 0$ such that
  \begin{equation}
    \label{eq:BDGInequality}
    C^{-1} \norm{\langle S \rangle}{\Lp{p}} \leq \norm{S^\ast}{\Lp{p}} \leq C \norm{\langle S \rangle}{\Lp{p}}.
  \end{equation}
  Remember as well that the \emph{Fatou set} of a dyadic martingale $S,$ denoted by $F(S),$ is defined as
  \begin{equation*}
    F(S) = \{x \in \reals \colon \lim_n S_n(x) \text{ exists and is finite}\}.
  \end{equation*}
  It is a standard result of Martingale Theory that, for a dyadic martingale $S$ such that $\norm{S}{\ast} < \infty,$ its Fatou set is $F(S) = \{x \in \reals \colon \langle S \rangle (x) < \infty\},$ where the equality must be understood up to sets of zero measure (see \cite{ref:Llorente}).
  
  Using the characterisation for the Sobolev space $\sobolev{1}{p}$ previously stated, we say that a function $b$ is in the dyadic space $\zygdyc^p$ if its average growth martingale $B,$ as defined in \eqref{eq:AGMartingale}, has quadratic characteristic $\langle B \rangle \in \Lp{p}$ and
  \begin{equation*}
    \norm{B}{\ast} = \sup_{I \in \mathcal{D}} |\Delta B(I)| < \infty.
  \end{equation*}
  Note that, in fact, $\zygdyc^p = \sobolev{1}{p}\cap\zygdyc.$
  Indeed, if $b \in \zygdyc^p,$ by definition $b \in \zygdyc.$
  Moreover, since its average growth martingale $B$ has quadratic characteristic $\langle B \rangle \in \Lp{p},$ $\langle B \rangle (x) < \infty$ for almost every $x \in \reals.$
  Thus, $B(x) = \lim_n B_n(x)$ exists almost everywhere and will satisfy $b'(x) = B(x)$ in the sense of distributions.
  Using \eqref{eq:BDGInequality}, $B^\ast \in \Lp{p}$ and, thus, $B \in \Lp{p}$ as well, which is the same to say that $b' \in \Lp{p}.$
  We now state the analogous of Theorem \ref{thm:sobolev} in this context.
  
  \begin{thm}
    \label{thm:sobolevDyadic}
    Let $f$ be a compactly supported function in $\zygdyc$ and fix $1 < p < \infty.$
    Let $S$ be the average growth martingale of $f.$
    For every $\varepsilon > 0,$ define the truncated quadratic characteristic
    \begin{equation*}
      D(f,\varepsilon)(x) = \left(\# \{n\colon |\Delta S_n(x)| > \varepsilon\}\right)^{1/2}.
    \end{equation*}
    Then,
    \begin{equation}
      \label{eq:SobolevDyadicDistance}
      \dist(f,\zygdyc^p) = \inf \{\varepsilon > 0 \colon D(f,\varepsilon) \in \Lp{p}\}.
    \end{equation}
  \end{thm}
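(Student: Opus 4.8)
The plan is to mirror the proof of Theorem~\ref{thm:clibmod}, replacing the defect condition measured by the Carleson‑type box functional with one measured in $\Lp{p}$ via the quadratic characteristic, and to use the Burkholder--Davis--Gundy inequality \eqref{eq:BDGInequality} together with the standard Fatou theorem for dyadic martingales to pass between martingales and their primitives. Let $\varepsilon_0$ denote the infimum on the right‑hand side of \eqref{eq:SobolevDyadicDistance}, and let $S$ be the average growth martingale of $f$, so that $|\Delta_2 f(I^\ast)| = 2|\Delta S(I)|$; thus, up to a harmless factor of $2$, the condition $|\Delta S_n(x)| > \varepsilon$ records exactly the dyadic intervals contributing to $D(f,\varepsilon)(x)$.

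For the upper bound, fix $\varepsilon > \varepsilon_0$. I would split the martingale $S$ into its ``large jump'' part and ``small jump'' part exactly as in Theorem~\ref{thm:clibmod}: set $B(I_0) = S(I_0)$ and $\Delta B(J) = \Delta S(J)$ when $|\Delta S(J)| > \varepsilon/2$, and $\Delta B(J) = 0$ otherwise. Then $\norm{S-B}{\ast} \leq \varepsilon/2$ by construction, so the associated function $t = f - b$ satisfies $\norm{t}{\ast d} \leq \varepsilon$. The point is to verify that $b \in \zygdyc^p$, i.e.\ that $\langle B \rangle \in \Lp{p}$. Pointwise,
\begin{equation*}
  \langle B \rangle(x)^2 = \sum_{n \geq 1} |\Delta B_n(x)|^2 = \sum_{\substack{n \geq 1\\ |\Delta S_n(x)| > \varepsilon/2}} |\Delta S_n(x)|^2 \leq \norm{S}{\ast}^2 \, \#\{n \colon |\Delta S_n(x)| > \varepsilon/2\} = \norm{S}{\ast}^2 \, D(f,\varepsilon/2)(x)^2,
\end{equation*}
so $\langle B \rangle \lesssim \norm{S}{\ast}\, D(f,\varepsilon/2)$; since $\varepsilon/2$ can still be taken larger than $\varepsilon_0$ (replacing $\varepsilon$ by $2\varepsilon_1$ with $\varepsilon_1 > \varepsilon_0$ and relabelling), this is in $\Lp{p}$. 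Hence $\langle B \rangle \in \Lp{p}$, by \eqref{eq:BDGInequality} we get $B^\ast \in \Lp{p}$, so $B_n(x)$ converges a.e.\ to a function $B(x) \in \Lp{p}$, and the primitive $b(x) = \int_0^x B(s)\, ds$ lies in $\sobolev{1}{p} \cap \zygdyc = \zygdyc^p$ with $\norm{f-b}{\ast d} \leq \varepsilon$. Letting $\varepsilon \downarrow \varepsilon_0$ gives $\dist(f,\zygdyc^p) \leq \varepsilon_0$.

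For the lower bound, suppose $b \in \zygdyc^p$ with $\norm{f-b}{\ast d} \leq \varepsilon$, and pick any $\varepsilon_1 > \varepsilon$; I want to conclude $\varepsilon_1 \geq \varepsilon_0$, or equivalently $D(f,\varepsilon_1) \in \Lp{p}$. Let $B$ be the average growth martingale of $b$ and $S$ that of $f$; then $\norm{S-B}{\ast} \leq \varepsilon/2$, so whenever $|\Delta S_n(x)| > \varepsilon_1/2$ we have $|\Delta B_n(x)| > (\varepsilon_1 - \varepsilon)/2 = \delta > 0$. Therefore
\begin{equation*}
  \delta^2 \, D(f,\varepsilon_1)(x)^2 = \delta^2 \#\{n \colon |\Delta S_n(x)| > \varepsilon_1/2\} \leq \sum_{\substack{n \geq 1 \\ |\Delta S_n(x)| > \varepsilon_1/2}} |\Delta B_n(x)|^2 \leq \langle B \rangle(x)^2,
\end{equation*}
so $D(f,\varepsilon_1) \leq \delta^{-1} \langle B \rangle \in \Lp{p}$ because $b \in \zygdyc^p$. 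Hence $\varepsilon_1 \geq \varepsilon_0$, and letting $\varepsilon_1 \downarrow \varepsilon$ yields $\varepsilon \geq \varepsilon_0$; taking the infimum over admissible $b$ gives $\dist(f,\zygdyc^p) \geq \varepsilon_0$. I expect the only genuinely delicate point to be bookkeeping the factors of $2$ (between $\Delta_2 f$ and $\Delta S$, and between $\varepsilon$, $\varepsilon/2$, $\varepsilon_1$) so that the ``$\inf$'' is attained as an equality rather than merely up to a constant — the function‑theoretic content is fully supplied by \eqref{eq:BDGInequality} and the standard identification $F(B) = \{\langle B\rangle < \infty\}$.
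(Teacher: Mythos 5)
Your proposal follows the same route as the paper: split the average growth martingale $S$ into a ``large jump'' martingale $B$ and a small remainder, bound $\langle B\rangle\lesssim\norm{S}{\ast}\,D(f,\cdot)$, use the Burkholder--Davis--Gundy inequality \eqref{eq:BDGInequality} together with a.e.\ convergence of $\Lp{p}$-bounded martingales to produce $b\in\zygdyc^p$, and for the lower bound run the same comparison in reverse to contradict $\langle B\rangle\in\Lp{p}$. The construction, the role of BDG and the Fatou-set identification are all exactly as in the paper.

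The place where your write-up does not actually deliver the asserted equality is the factor of $2$ coming from $|\Delta_2 f(I^\ast)| = 2|\Delta S(I)|$, which you flag at the end but do not resolve. In the upper bound you threshold at $|\Delta S(J)|>\varepsilon/2$ to secure $\norm{f-b}{\ast d}\le\varepsilon$, but then you need $D(f,\varepsilon/2)\in\Lp{p}$; your ``relabel $\varepsilon=2\varepsilon_1$'' device produces $\norm{f-b}{\ast d}\le 2\varepsilon_1$ with $\varepsilon_1>\varepsilon_0$, so what you have actually shown is $\dist(f,\zygdyc^p)\le 2\varepsilon_0$, and the final sentence ``letting $\varepsilon\downarrow\varepsilon_0$'' is not available (the admissible $\varepsilon$ only go down to $2\varepsilon_0$). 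In the lower bound you write $D(f,\varepsilon_1)(x)^2 = \#\{n:|\Delta S_n(x)|>\varepsilon_1/2\}$, which is not the stated definition (it counts jumps $>\varepsilon_1$), so the two halves of your argument are silently using different normalizations of $D$. To be fair, the paper's own proof of this theorem elides the same constant: it thresholds $|\Delta S(I)|>\varepsilon$ and then states $\norm{f-b}{\ast d}\le\varepsilon$ rather than $\le 2\varepsilon$. The normalization that makes the equality in \eqref{eq:SobolevDyadicDistance} come out exactly --- and which matches the bookkeeping in Theorem~\ref{thm:clibmod} --- is to define $D(f,\varepsilon)$ by counting levels with $|\Delta_2 f|>\varepsilon$, equivalently $|\Delta S_n(x)|>\varepsilon/2$; with that convention your argument with threshold $\varepsilon/2$, and without the relabelling detour, is precisely the paper's proof.
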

  
  \begin{proof}
    Let $\varepsilon_0$ be the infimum on \eqref{eq:SobolevDyadicDistance}.
    Assume $0 < \varepsilon < \varepsilon_1 < \varepsilon_0$ and that there is $b \in \zygdyc^p$ such that $\norm{f-b}{\ast d} \leq \varepsilon.$
    Let $B$ be the average growth martingale of function $b.$
    Whenever $|\Delta S_n(x)| > \varepsilon_1,$ we have that $|\Delta B_n(x)| > \varepsilon_1 - \varepsilon = \delta > 0.$
    Thus,
    \begin{align*}
      \langle B \rangle^2 (x) = \sum_{n=1}^{\infty} |\Delta B_n(x)|^2 &\geq
      \sum_{|\Delta B_n(x)|>\delta} |\Delta B_n(x)|^2 \\
      &\geq \frac{\delta^2}{\norm{f}{\ast d}^2} D^2(f,\varepsilon_1)
    \end{align*}
    for all $x \in \reals.$
    But, since $\varepsilon_1 < \varepsilon_0,$ $D(f,\varepsilon_1) \not\in \Lp{p}$ and so $\langle B \rangle \not\in \Lp{p},$ getting in this way a contradiction.
    Hence, we see that $\dist(f,\zygdyc^p) \geq \varepsilon_0.$
    
    Assume that $f$ is supported on $I_0.$
    Consider now $\varepsilon > \varepsilon_0.$
    Construct a dyadic martingale $B$ with $B(I_0) = S(I_0)$ and such that $\Delta B(I) = \Delta S(I)$ for all $I \in \mathcal{D}(I_0)$ whenever $|\Delta S(I)| > \varepsilon,$ but take $\Delta B(I) = 0$ when $|\Delta S(I)| \leq \varepsilon.$
    Note that $\langle B \rangle \in \Lp{p}.$
    Therefore, using \eqref{eq:BDGInequality}, we see that we can define $b'(x) = \lim_n B_n(x)$ almost everywhere with $b' \in \Lp{p}.$
    Taking now $b(x) = \int_0^x b'(s)\, ds,$ we get $b \in \zygdyc^p$ such that $\norm{f-b}{\ast d} \leq \varepsilon.$
    This shows that $\dist(f,\zygdyc^p) \leq \varepsilon_0,$ completing the proof.
  \end{proof}
  
  \begin{proof}[Proof of Theorem \ref{thm:sobolev}]
    Let $\varepsilon_0$ be the infimum in \eqref{eq:SobolevDistance}.
    Assume $0 < \varepsilon < \varepsilon_1 < \varepsilon_0,$ take $\delta = \varepsilon_1 - \varepsilon,$ and assume that there is $b \in \zyg^p$ such that $\norm{f-b}{\ast} \leq \varepsilon.$
    The same argument used in the first part of the proof of Theorem \ref{thm:sobolevDyadic} allows us to see that
    \begin{equation*}
      C(b)(x) \geq \delta C(b,\delta)(x) \geq \delta C(f,\varepsilon_1)(x)
    \end{equation*}
    for $x \in \reals.$
    Since $\varepsilon_1 < \varepsilon_0,$ we have that $C(f,\varepsilon_1) \not\in \Lp{p}$ and, thus, $C(b) \not\in \Lp{p},$ contradicting that $b \in \zyg^p.$
    Hence, $\dist(f,\zyg^p) \geq \varepsilon_0.$
    
    Fix $\varepsilon > \varepsilon_0$ so that $C(f,\varepsilon) \in \Lp{p}.$
    For $\alpha \in [-1,1],$ consider $f^{(\alpha)} = f(x+\alpha).$
    Note that $C(f^{(\alpha)},\varepsilon) \in \Lp{p}$ as well.
    Using the same argument as in the proof of Theorem \ref{thm:IBMODistance}, one can see that this fact implies that $D(f^{(\alpha)},\varepsilon) \in \Lp{p}.$
    Thus, for each $\alpha \in [-1,1],$ the function $f^{(\alpha)}$ satisfies the hypothesis of Theorem \ref{thm:sobolevDyadic} and may be approximated as $f^{(\alpha)} = b^{(\alpha)} + t^{(\alpha)},$ where $b^{(\alpha)} \in \zygdyc^p$ with $\norm{b^{(\alpha)}}{\ast d} \leq \norm{f}{\ast}$ and $\norm{t}{\ast d} \leq \varepsilon.$
    Apply now Theorem \ref{thm:zygAverages}, with $R = 1,$ both with the mapping $\alpha \mapsto b^{(\alpha)}$ and $\alpha \mapsto t^{(\alpha)}$ to obtain respectively functions $b$ and $t$ such that $f = b + t$ and such that $b \in \zyg^p$ and $\norm{t}{\ast} \lesssim \varepsilon.$
    This completes the proof.
  \end{proof}
  
  \section{Open Problems}
  \label{sec:OpenProblems}
  This last section is devoted to state three open problems closely related to our results.
  
  \textbf{1.} Observe that Theorem \ref{thm:MeasureIBMODistance} is a generalisation of Theorem \ref{thm:IBMODistance} for measures on $\reals^d$ that works for any $d \geq 1.$
  Nonetheless, we have not been able to generalise Theorem \ref{thm:IBMODistance} for functions on the Zygmund class on $\reals^d$ for $d > 1.$
  We say that a continuous function $f\colon \reals^d \rightarrow \reals$ is in the Zygmund class $\zyg(\reals^d)$ if
  \begin{equation*}
    \norm{f}{\ast} = \sup_{\substack{x,h \in \reals^d\\ h \neq 0}} \frac{|f(x+h)-2f(x)+f(x-h)|}{\norm{h}{}} < +\infty.
  \end{equation*}
  A continuous function on $\reals^d$ is in $\IBMO(\reals^d)$ if its partial derivatives in the sense of distributions are functions in $\BMO.$
  It is easy to see that $\IBMO(\reals^d) \subset \zyg(\reals^d).$
  We do not know an analog of Theorem \ref{thm:IBMODistance} when $d > 1.$
  Roughly speaking, the method we develop in this paper works discretising a function in the Zygmund class and modifying its average growth on certain intervals, so that we end up constructing the derivative of a function in $\IBMO.$
  Nonetheless, when applying this method in $d > 1$ variables, one ends up constructing $d$ functions that approximate the divided differences of a given function in the coordinate directions.
  However, this system of $d$ functions is not, in general, the gradient of an $\IBMO$ function.

  \medskip

  \textbf{2.} Another related open problem is to find the closure of the space of Lipschitz functions, $\Lip,$ in the Zygmund class.
  It is a well known fact that singular integral operators such as the Hilbert transform are bounded in $\Lp{p}$ for $1 < p < \infty,$ but they are not in $\Lp{\infty}.$
  Nonetheless, they are bounded from $\Lp{\infty}$ to $\BMO$ and from $\BMO$ to itself.
  In a similar fashion, these operators are bounded on the Hölder classes $\Lip_\alpha,$ for $0 < \alpha < 1,$ but they are not in the Lipschitz class.
  However, they are actually bounded on the Zygmund class, which plays a similar role to that of $\BMO$ in the previous setting.
  In this sense, this problem would be related to the one solved by Garnett and Jones in \cite{ref:GarnettJonesDistanceInBMO}, where they find a characterisation for the closure of $\Lp{\infty}$ in the space $\BMO.$

  \medskip
  
  \textbf{3.} The open problem mentioned in the previous paragraph is analogous to the well known open problem of describing the closure of the space $\mathbb{H}^\infty$ of bounded analytic functions in the unit disk into the Bloch space $\mathcal{B},$ consisting of analytic functions $f$ in the unit disk such that $\sup_{z \in \disk} (1-|z|^2) |f'(z)| < \infty$ (see \cite{ref:AndersonCluniePommerenke}).
  In \cite{ref:GhatageZheng} the closure of the space $\operatorname{BMOA}$ in $\mathcal{B}$ is described, while in \cite{ref:MonrealNicolau} and \cite{ref:GalanopoulosMonrealPau} the closure of the class $\mathbb{H}^p\cap\mathcal{B}$ is studied.
  Here $\mathbb{H}^p$ denotes the classical Hardy spaces of analytic functions in the unit disk.
  It is worth mentioning that in this setting, the proofs rely on reproducing formulae that analytic functions fulfill, which are not available in our real variable situation.
  
  \printbibliography
  
  \Addresses

\end{document}